\theoremstyle{definition}
\newtheorem{thm}{Theorem}[section]
\newtheorem{lem}[thm]{Lemma}
\newtheorem{defn}[thm]{Definition}
\newtheorem{cor}[thm]{Corollary}
\newtheorem{prop}[thm]{Proposition}
\newtheorem{ex}[thm]{Example}
\theoremstyle{remark}
\newtheorem{rem}[thm]{Remark}
\newtheorem{obs}[thm]{Observation}
\newcommand{\N}{{\mathbb{N}}}
\newcommand{\R}{{\mathbb{R}}}
\newcommand{\Z}{{\mathbb{Z}}}
\newcommand{\T}{{\mathbb{T}}}
\newcommand{\C}{{\mathbb{C}}}
\newcommand{\cI}{{\mathcal I}}
\newcommand{\cO}{{\mathcal O}}
\newcommand{\cP}{{\mathcal P}}
\newcommand{\sgn}{\mathop{\rm sgn}}
\newcommand{\kernel}{\mathop{\rm ker}}
\newcommand{\spec}{\mathop{\rm Spec}}
\def\primitive{\frak{p}}
\def\phi{\varphi}
\def\deg{\mathrm{deg}}
\def\t{\mathbf{t}}
\def\z{\mathbf{z}}
\def\w{\mathbf{w}}
\def\v{\mathbf{v}}
\def\zero{\mathbf{0}}
\def\one{\mathbf{1}}
\def\bra{\langle}
\def\ket{\rangle}
\def\im{\mathrm{Im}}
\def\lup{L^{\mathrm{up}}}
\def\ldown{L^{\mathrm{down}}}
\def\aup{A^{\mathrm{up}}}
\def\adown{A^{\mathrm{down}}}
\def\lup{L^{\mathrm{up}}}
\def\ldown{L^{\mathrm{down}}}
\def\Vhat{\hat{V}}
\def\Psiprime{\Psi} 
\def\vec#1{\mathbf{#1}}
\def\aa{\vec{a}}
\def\mm{m,m}
\newcommand{\lcm}{\mathrm{lcm}}
\newcommand{\orb}{\mathrm{orb}}
\newcommand{\Gal}{\mathrm{Gal}}
\newcommand{\Psiirred}{\Psi^{\mathrm{irr}}}
\newcommand{\half}{(\mathrm{half})}
\numberwithin{equation}{section}
\def\la{\lambda}
\def\t{{\bf t}}
\def\Q{\mathbb{Q}}
\newcommand\lref[1]{Lemma~\ref{#1}}
\newcommand\tref[1]{Theorem~\ref{#1}}
\newcommand\pref[1]{Proposition~\ref{#1}}
\newcommand\sref[1]{Section~\ref{#1}}
\newcommand\rref[1]{Remark~\ref{#1}} 
\newcommand\cref[1]{Corollary~\ref{#1}}
\newcommand\eref[1]{Example~\ref{#1}}
\title{Zeta functions of periodic cubical lattices and cyclotomic-like polynomials}
\author{Yasuaki Hiraoka}
\address{Kyoto University Institute for Advanced Study, WPI
Institute for the Advanced Study of Human Biology, Kyoto
University, Yoshida Ushinomiya-cho, Sakyo-ku, Kyoto
606-8501, Japan; 
Center for Advanced Intelligence Project,
RIKEN, 1-4-1 Nihonbashi, Chuo-ku, Tokyo, 103-0027, Japan}
\email{hiraoka.yasuaki.6z@kyoto-u.ac.jp}
\author{Hiroyuki Ochiai}
\address{Institute of Mathematics for Industry, 
Kyushu University, 
744 Motooka, Nishi-ku,
Fukuoka 819-0395, Japan} 
\email{ochiai@imi.kyushu-u.ac.jp}
\author{Tomoyuki Shirai}
\address{Institute of Mathematics for Industry, 
Kyushu University, 
744 Motooka, Nishi-ku,
Fukuoka 819-0395, Japan; 
Center for Advanced Intelligence Project,
RIKEN, 1-4-1 Nihonbashi, Chuo-ku, Tokyo, 103-0027, Japan}
\email{shirai@imi.kyushu-u.ac.jp}
\subjclass[2010]{11R09, 11S40, 05E45, 05C50, 58C40}
\keywords{Cubical lattice, zeta function, Laplacian,
cyclotomic-like polynomial}
\begin{document}
\begin{abstract} Zeta functions of 
periodic cubical lattices are explicitly derived  
 by computing all the eigenvalues of the adjacency operators and their
 characteristic polynomials. 
We introduce cyclotomic-like polynomials to give 
factorization of the zeta function in terms of them and 
count the number of orbits of the Galois action 
associated with each cyclotomic-like polynomial to 
obtain its further factorization. 
We also give a necessary and sufficient condition 
for such a polynomial to be irreducible and discuss its 
 irreducibility from this point of view.  
\end{abstract}
\maketitle

\section{Introduction}\label{sec:intro}

Let $K$ be an abstract simplicial complex over the set
$K_0=\{1,\dots,q\}$ of vertices, where we assume that the empty set
$\emptyset$ is an element of $K$, i.e.,
$K_{-1}=\{\emptyset\}$. 
We denote 
the set of $k$-simplices and the $k$-dimensional skeleton of $K$
by $K_k$ and $K^{(k)}(=\sqcup_{i=-1}^k K_i)$. For each simplex
$\sigma\in K$, $|\sigma|$ denotes the number of vertices in $\sigma$,
and similarly, $|S|$ denotes the cardinality of a finite set $S$.  

Let $\vec{n}=(n_1,\dots,n_q)\in\N^q$ be a fixed vector of natural
numbers with $n_i \ge 2$ for all $i \in K_0$. 
We write $|\vec{n}| = \prod_{i=1}^q n_i$. 
For each $i\in K_0$, let 
\[
	\cI_i^{\circ}:=\{[0,1],[1,2], \dots,[n_i-1,0]\}, \quad
	\cP_i^{\circ}:=\{[0,0],[1,1], \dots,[n_i-1,n_i-1]\} 
\]
be the collections of intervals in $\R/n_i\Z$, where the
elements of $\cP^{\circ}_i$ 
are degenerated intervals. 
For each simplex $\sigma\in K$, we set a collection of $|\sigma|$-dimensional elementary cubes by 
\[
Y_\sigma=
\{
I_1\times\dots\times I_q \colon
I_i\in\cI^{\circ}_i~{\rm if~}  i\in\sigma;~
I_i\in\cP^{\circ}_i~{\rm otherwise}
\}.
\]
Then, we define the $d$-cubical lattice $Y^{(d)}$ over $K$
by 
\[
	Y^{(d)}=\bigsqcup _{k=0}^d Y_k,\quad Y_k=\bigsqcup_{\sigma\in K_{k-1}} Y_\sigma
\]
for $d\leq q$. We also write $Y=Y^{(q)}$. 
When we need to specify the side lengths $\vec{n}$, we write
$Y = Y(\vec{n})$.  
We refer to an element in $Y_k$ as a $k$-cube. 
The cubical lattice $Y$ is realized in the $q$-dimensional torus $\T^q$. 
For more explanation about elementary cubes (and also cubical homology), 
we refer the reader to \cite{kmm}; in particular, 
see subsection 2.1.1. for elementary cubes. 

A closed path in $Y^{(d)}$ is an alternating sequence 
$c = (\tau_0,\sigma_0, \tau_1, \sigma_1, \dots, \tau_{n-1},\sigma_{n-1})$ of $(d-1)$-cubes
$(\tau_i)_{i=0}^{n-1}$ and $d$-cubes $(\sigma_i)_{i=0}^{n-1}$ such that
$\tau_i \not= \tau_{i+1}$ 
and $\tau_{i+1} \subset \sigma_i \cap \sigma_{i+1}$ 
for all $i \in \Z_n := \Z/n \Z$. Note that $\tau_0 \subset \sigma_{n-1} \cap \sigma_0$. 
The length of a closed path $c$, denoted by $|c|$, 
is the number of $d$-cubes (also $(d-1)$-cubes) in $c$. 
We say that a closed path $c$ is a closed geodesic if there is no
back-tracking, i.e., $\sigma_i \not= \sigma_{i+1}$ for all $i \in \Z_{|c|}$.  
We denote by $c^m$ the $m$-multiple of a closed geodesic
$c$, which is formed by $m$ repetitions of $c$. 
If a closed geodesic $c$ is not expressed as 
an $m$-multiple of a closed geodesic with $m \ge 2$, then $c$ is said to be prime. 
Two prime closed geodesics are said to be equivalent if one
is obtained from the other through a cyclic permutation. 
An equivalence class of prime closed geodesics is called a prime cycle. 
The length of a prime cycle $\primitive$ is the length of a representative and is denoted by $|\primitive|$. 
The (Ihara) zeta function of a finite cubical lattice is defined as follows. 
\begin{defn}\label{def:zetafn}
 For $u \in \C$ with $|u|$ sufficient small, the (Ihara)
 zeta function of the $d$-skeleton $Y^{(d)}$ of a cubical lattice $Y$ is defined by 
\[
 \zeta_{Y^{(d)}}(u) = \prod_{\primitive \in P_d} (1 - u^{|\primitive|})^{-1}, 
\]
where $P_d$ is the set of prime cycles of $Y^{(d)}$. 
In particular, we write $\zeta_Y(u)$ for $\zeta_{Y^{(q)}}(u)$.
\end{defn}
Although zeta functions can be defined in this manner for
any cubical (simplicial) complexes, in the present paper, 
we only consider zeta functions for the cubical 
lattice $Y$ with the complete complex $K$, which includes
all the subsets of $K_0$.

By symmetry, we have $\zeta_{Y^{(d)}}(u) = \zeta_{Y^{(q-d+1)}}(u)$ for $d=1,2,\dots,q$ (see \rref{rem:symmetry}). 
For $d=q, q-1$ (and hence, by symmetry, for $d=1,2$, respectively), 
we have explicit factorizations of the zeta functions
defined above. 
\begin{thm}\label{thm:zeta}
Let $Y$ be the periodic $q$-cubical lattice with 
side lengths $\vec{n} = (n_1,n_2,\dots, n_q)$. Then, for
 $d=q$, 
\begin{align*}
\zeta_{Y}(u)^{-1}
 &= (1-u^2)^{(q-1)|\vec{n}|}
 \prod_{k_1=1}^{n_1} \!\! \cdots \!\! \prod_{k_q=1}^{n_q}\!
\Big\{1 - 2u \sum_{i=1}^q \cos \frac{2\pi k_i}{n_i} + (2q-1)u^2 \Big\}  
\end{align*}
and for $d=q-1$, 
 \begin{align*}
 \zeta_{Y^{(q-1)}}(u)^{-1} 
 &= (1-u)^{\kappa |\vec{n}| } (1+3u)^{\gamma|\vec{n}|}
  \times
 \prod_{k_1=1}^{n_1} \cdots \prod_{k_q=1}^{n_q}
  F_1^{\mathrm{up}}(u, \vec{k}). 
 \end{align*}
 Here $\vec{k} = (k_1,\dots,k_q)$,
 $\kappa = q(3q-5)/2$, $\gamma = q(q-3)/2$ and 
\[
F_1^{\mathrm{up}}(u, \vec{k}) = \sum_{\ell=0}^q 
(2-\ell)2^{\ell-1} e_{\ell}(\w) 
 \Big\{ 1 - 2 u \sum_{i=1}^q \cos \frac{2 \pi k_i}{n_i} + 3(2q-3) u^2 \Big\}^{q-\ell} u^{\ell}, 
\]
where $e_{\ell}(\vec{t})$ is the $k$th elementary symmetric polynomial in 
$\t=(t_1,\dots,t_q)$, and $\w = (2 + 2 \cos \frac{2 \pi k_i}{n_i})_{i=1}^q$. 
\end{thm}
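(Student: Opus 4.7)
My plan is to identify $\zeta_{Y^{(d)}}(u)^{-1}$ with the determinant $\det(I - u B_d)$ of a non-backtracking operator on incident pairs, exploit the translation symmetry of the periodic cubical lattice to block-diagonalize $B_d$ by a discrete Fourier transform, and then compute each Fourier block explicitly. Concretely, set $E_d := \{(\tau, \sigma) \colon \tau \subset \sigma,\ |\tau| = d-1,\ |\sigma| = d\}$ and define $(B_d)_{(\tau, \sigma),(\tau', \sigma')} = 1$ iff the closed-geodesic transition $(\tau, \sigma) \to (\tau', \sigma')$ is allowed. A standard trace/Euler-product manipulation then gives $\zeta_{Y^{(d)}}(u)^{-1} = \det(I - u B_d)$. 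The translations $a \mapsto a + e_i$ commute with $B_d$, so decomposing along the characters $\vec{k} = (k_1, \ldots, k_q)$ splits $B_d \cong \bigoplus_{\vec{k}} B_d(\vec{k})$, where $B_d(\vec{k})$ acts on the finite-dimensional space of ``internal'' pair-types (of dimension $2q$ for $d = q$ and $2q(q-1)$ for $d = q-1$), with each shift $a \mapsto a + e_i$ replaced by multiplication by $\omega_i = e^{2\pi i k_i / n_i}$.

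For $d = q$, each $(q-1)$-cube is a face of exactly two $q$-cubes, so a pair is encoded by a $q$-cube position $b$, a direction $j$, and a bit $\epsilon \in \{0,1\}$ indicating which of the two type-$j$ faces of the cube at $b$ is meant. A direct computation of the transitions yields
$$B_q(\vec{k}) = - P(\vec{k}) + \tilde{u}\,\tilde{v}^T,\qquad P(\vec{k}) = \begin{pmatrix} 0 & D^{-1} \\ D & 0 \end{pmatrix},\qquad \tilde{u} = \begin{pmatrix} \vec{1} \\ \vec{1} \end{pmatrix},\qquad \tilde{v}^T = \bigl(\vec{1}^T D,\; \vec{1}^T D^{-1}\bigr),$$
where $D = \diag(\omega_1, \ldots, \omega_q)$ and $\vec{1}$ is the all-ones vector in $\C^q$. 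Since $P^2 = I$, one has $\det(I + uP) = (1-u^2)^q$ and $(I + uP)^{-1} = (1-u^2)^{-1}(I - uP)$. Applying the matrix-determinant lemma to the rank-one update $\tilde{u}\tilde{v}^T$, and using the identities $\tilde{v}^T \tilde{u} = 2 \sum_i \cos(2\pi k_i/n_i)$ and $\tilde{v}^T P \tilde{u} = 2q$, gives
$$\det(I - u B_q(\vec{k})) = (1-u^2)^{q-1}\Big(1 - 2u \sum_{i=1}^q \cos\tfrac{2\pi k_i}{n_i} + (2q-1)\, u^2\Big),$$
and taking the product over all $\vec{k}$ proves the $d = q$ formula.

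For $d = q-1$, a pair $(\tau, \sigma)$ is indexed by a $(q-1)$-cube $(b, i)$ together with a face datum $(j, \epsilon)$ with $j \neq i$, yielding a $2q(q-1)$-dimensional Fourier block $B_{q-1}(\vec{k})$. Now each $(q-2)$-cube is a face of four $(q-1)$-cubes, so from $(b; i; j, \epsilon)$ there are $3(2q-3)$ successors and the structure of $B_{q-1}(\vec{k})$ is correspondingly richer. I would decompose the pair-type space using the natural involutions of the index set---swapping the two degenerate coordinates of $\tau$ and flipping $\epsilon \mapsto 1 - \epsilon$---to isolate $\vec{k}$-independent invariant subspaces on which $B_{q-1}(\vec{k})$ acts by the scalars $+1$ and $-3$; these produce the $(1-u)^{\kappa|\vec{n}|}$ and $(1+3u)^{\gamma|\vec{n}|}$ factors, and the multiplicities are forced combinatorially by the identity $\kappa + \gamma + 2q = 2q(q-1)$. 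On the complementary $2q$-dimensional ``generic'' block I apply the same type of rank reduction as in the $d = q$ case; the resulting determinant expands into a sum indexed by subsets of $\{1, \ldots, q\}$ which reorganizes, via the elementary symmetric polynomials $e_\ell(\vec{w})$ in $\vec{w} = (2 + 2\cos(2\pi k_i/n_i))_{i=1}^q$, into precisely the expression $F_1^{\mathrm{up}}(u, \vec{k})$ claimed in the theorem.

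The principal obstacle lies in the $d = q - 1$ case: writing $B_{q-1}(\vec{k})$ explicitly is routine, but choosing a basis that cleanly isolates the constant eigenvalues $+1$ and $-3$ requires careful bookkeeping adapted to the combinatorial symmetries of the pair-index set, and the subsequent low-rank reductions needed to cast the generic $2q \times 2q$ block in the elementary-symmetric-polynomial form $F_1^{\mathrm{up}}$ are the most computationally intensive part of the argument.
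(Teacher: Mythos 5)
Your $d=q$ argument is correct and takes a genuinely different route from the paper. The paper invokes Storm's hypergraph version of the Bass determinant formula (their Theorems~\ref{thm:storm} and \ref{thm:storm2}), reducing $\zeta_{Y^{(d)}}^{-1}$ to $\det\bigl((1+\alpha u)(1+\beta u)I - u\,\adown_d(\z)\bigr)$ where $\adown_d(\z)$ is the twisted incidence-adjacency operator of size ${q\choose d-1}$; for $d=q$ this operator is a scalar and the product is immediate. You instead work with the non-backtracking operator $B_d$ directly on incident $(\tau,\sigma)$-pairs (equivalently, on oriented edges of the bipartite graph $B_H$), getting $2q\times 2q$ Fourier blocks and applying the matrix-determinant lemma to the rank-one perturbation of the involution $P$. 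I checked that $\tilde v^T\tilde u = 2\sum_i\cos(2\pi k_i/n_i)$, $\tilde v^TP\tilde u = 2q$, and the resulting $(1-u^2)^{q-1}\bigl(1-2u\sum\cos\theta_i+(2q-1)u^2\bigr)$: this is correct, and it bypasses the need to cite Storm's theorem at all. What the paper's route buys is uniformity across all $d$: the same size-$\,{q\choose d-1}$ block formula works for every skeleton, whereas your block grows like $2d{q\choose d}$.

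The $d=q-1$ part, however, is only a sketch and has a concrete structural gap. You propose to split the $2q(q-1)$-dimensional Fourier block into $\vec k$-independent invariant subspaces on which $B_{q-1}(\vec k)$ is the scalar $+1$ (dimension $\kappa = q(3q-5)/2$) or $-3$ (dimension $\gamma = q(q-3)/2$), plus a $2q$-dimensional generic block giving $F_1^{\mathrm{up}}$, and you argue the multiplicities are ``forced'' by $\kappa+\gamma+2q = 2q(q-1)$. But for $q=2$ one has $\gamma=-1<0$: a $(-1)$-dimensional invariant subspace does not exist, so the decomposition cannot be as you describe. (The identity you cite only constrains the signed sum of dimensions, not nonnegative multiplicities, so it does not force anything.) What actually happens for $q=2$ is that $F_1^{\mathrm{up}}(u,\z) = (1+u)(1+3u)\bigl(1 - u\sum_i(z_i+z_i^{-1})+3u^2\bigr)$ carries an extra $(1+3u)$ factor which cancels the $(1+3u)^{\gamma|\vec n|}$ with negative exponent, and the paper explicitly checks this consistency after the proof. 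Your plan does not account for this, and the claimed reorganization of the generic $2q\times 2q$ block into the $e_\ell(\w)$-sum is asserted but not derived. The paper avoids the whole difficulty by using the symmetry $\zeta_{Y^{(q-1)}}(u)=\zeta_{Y^{(2)}}(u)$ (their Remark~\ref{rem:symmetry}) to replace the size-$2q(q-1)$ problem with a $q\times q$ one, namely $\det\bigl(t - u\,\aup_1(\z)\bigr)$, and then exploits the rank-one identity $\aup_1(\z)-e_1(\w)I_q = D_{\one+\z^{-1}}(J_q-2I_q)D_{\one+\z}$ together with $\det(D_{\aa}-J_q)=e_q(\aa)-e_{q-1}(\aa)$ to produce the elementary-symmetric-polynomial form (their Proposition right before the proof of the latter half of Theorem~\ref{thm:zeta}). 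You would need some analogue of that lemma, and a corrected bookkeeping for the scalar subspaces that is valid for all $q\ge 2$, before the $d=q-1$ half of your argument can be considered complete.
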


 In principle, the expression of $\zeta^{-1}_{Y^{(d)}}$
 for arbitrary $d$ can be obtained using \tref{thm:zetaforcubical}.

Let $Y = Y(\vec{n})$ be the periodic cubical lattice
 with side lengths $\vec{n} =(n_1,\dots,n_q)$. Then, for $|u| < 1$, 
 \begin{align*}
 \lefteqn{
 \lim_{n_1, \dots,n_q \to \infty} \frac{-1}{|\vec{n}|} \log
 \zeta_Y(u)}\\  
&= (q-1) \log (1-u^2)
+ \int_{[0,1]^q} \log \Big(1 - 2u \sum_{i=1}^q \cos 2
 \pi \theta_i + (2q-1)u^2 \Big) d\theta_1 \cdots d\theta_q. 
 \end{align*}
Note that the second term on the right-hand side is the
logarithmic Mahler measure of the Laurant polynomial $1 +
(2q-1)u^2 - u \sum_{i=1}^q (z_i + z_i^{-1}) \in
\C[z_1,z_1^{-1}, \dots, z_q, z_q^{-1}]$ (cf. \cite{boyd}). 
It is known that as $u \to 1$, 
the second term converges to $0$ and $-4G/\pi$ for $q=1$ and
$2$, respectively, where $G$ is the Catalan number defined by 
$G = \sum_{k=0}^{\infty}(-1)^k/(2k+1)^2$.

The zeta function given in \tref{thm:zeta} can be written as 
a product of cyclotomic-like polynomials with integer coefficients. 
Let 
\begin{equation}
J_1=J_2=\{1\},
\quad J_d = \{j \in \N : j < d/2, \ \gcd(j,d)=1\} \text{ for
$d \ge 3$}, 
\label{eq:Jd}
\end{equation}
and then
\[
 |J_d| = \tilde{\phi}(d) := 
\begin{cases}
 \phi(d)/2 & \text{for $d \ge 3$}, \\
 1 & \text{for $d =1,2$},  
\end{cases}
\]
where $\phi$ is the Euler function, i.e., 
$\varphi(d)$ is the cardinality of 
$\{j \in \N : j \le d, \ \gcd(j,d)=1\}$. 
The set $J_d$ can be regarded as a set of representatives of $(\Z / d\Z)^{\times}/
\sim$, where $\sim$ is the equivalence relation with respect
to the involution $\iota : (\Z / d\Z)^{\times} \to (\Z /
d\Z)^{\times}$ defined by 
$\iota(k) = d-k \mod d$. 

For $\vec{d} = (d_1,\dots,d_q) \in \N^q$, we define the
following homogeneous polynomial of $x$ and $y$ of degree
$\prod_{i=1}^q \tilde{\phi}(d_i)$: 
\[
 \Psiprime_{\vec{d}}(x,y) := 
\prod_{j_1 \in J_{d_1}} \cdots \prod_{j_q \in J_{d_q}} \Big(x -
2y\sum_{i=1}^q \cos \frac{2\pi j_i}{d_i}\Big).  
\]
Then, it is seen that $\Psiprime_{\vec{d}}(x,y) \in
\Z[x,y]$ (\pref{proposition:injective}), and in particular,
for $q=1$, $\Psi_d(x,y)$ is irreducible for any $d \in \N$
(\lref{lem:Psid}) (cf. \cite{WZ}). 
Although $\Psiprime_{\vec{d}}(x,y)$ may be reducible for 
general $\vec{d}$ unless $q = 1$, if $\vec{d} = (d_1,\dots,d_q)$ are
relatively prime, then $\Psi_{\vec{d}}$ is irreducible (\cref{lem:inj2irr}). 

Using the polynomials $\Psi_{\vec{d}}$, 
we can factorize the zeta functions as follows. 

\begin{cor}\label{thm:zeta2}
Let $Y$ be the periodic $q$-cubical lattice with 
side lengths $\vec{n} = (n_1,n_2,\dots, n_q)$. Then, 
\[
 \zeta_{Y}(u)^{-1} 
 = (1-u^2)^{(q-1)|\vec{n}|} 
\prod_{\vec{d} | \vec{n}}
\Psiprime_{\vec{d}}(1+(2q-1)u^2, u)^{\epsilon(\vec{d})}, 
\]
where $\vec{d} | \vec{n}$ means $d_i | n_i$ for all $i=1,2,\dots,q$ and 
$\epsilon(\vec{d}) = 2^k$ with $k = \#\{1 \le i \le q : d_i \ge 3\}$. 
\end{cor}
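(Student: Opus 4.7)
The plan is to derive \cref{thm:zeta2} directly from \tref{thm:zeta} by reindexing the product
\[
 \prod_{k_1=1}^{n_1}\cdots\prod_{k_q=1}^{n_q}\Big\{1-2u\sum_{i=1}^{q}\cos\tfrac{2\pi k_i}{n_i}+(2q-1)u^2\Big\}
\]
according to the reduced fractions $k_i/n_i = j_i/d_i$. The prefactor $(1-u^2)^{(q-1)|\vec n|}$ already matches, so the task is purely combinatorial: show that the product above equals $\prod_{\vec d \mid \vec n}\Psi_{\vec d}(1+(2q-1)u^2,u)^{\epsilon(\vec d)}$.

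First I would put $d_i = n_i/\gcd(n_i,k_i)$ and $j_i = k_i/\gcd(n_i,k_i)$, which gives a bijection from $\{1,\dots,n_i\}$ onto the set of pairs with $d_i\mid n_i$, $1\le j_i\le d_i$, and $\gcd(j_i,d_i)=1$. The factor indexed by $\vec k$ depends on $\vec k$ only through the ratios $k_i/n_i = j_i/d_i$, so regrouping gives
\[
 \prod_{\vec d\mid\vec n}\ \prod_{\substack{1\le j_i\le d_i\\ \gcd(j_i,d_i)=1}}\Big\{1-2u\sum_{i=1}^{q}\cos\tfrac{2\pi j_i}{d_i}+(2q-1)u^2\Big\}.
\]

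Next I would collapse the inner product onto $\prod_i J_{d_i}$ via the coordinatewise action of the involution $\iota:j_i\mapsto d_i-j_i$ on $(\Z/d_i\Z)^{\times}$, which preserves $\cos(2\pi j_i/d_i)$. The combinatorial crux is a fixed-point count: $\iota$ acts freely on $(\Z/d_i\Z)^{\times}$ when $d_i\ge 3$, since a fixed point would give $d_i=2j_i$, contradicting $\gcd(j_i,d_i)=1$; for $d_i\in\{1,2\}$ the involution is trivial. Consequently every multi-index $\vec j\in\prod_i J_{d_i}$ is hit exactly $2^{\#\{i:\,d_i\ge 3\}}=\epsilon(\vec d)$ times by the original range of $\vec j$, and comparing with the definition of $\Psi_{\vec d}$ identifies the inner product with $\Psi_{\vec d}(1+(2q-1)u^2,u)^{\epsilon(\vec d)}$.

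No genuine obstacle arises beyond bookkeeping; the one point requiring care is the fixed-point analysis for $d_i\in\{1,2\}$ versus $d_i\ge 3$, which is what distinguishes the boundary exponents and explains why $J_1=J_2=\{1\}$ must be defined separately from $J_d$ for $d\ge 3$ in \eqref{eq:Jd}.
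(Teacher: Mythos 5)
Your proof is correct and follows essentially the same route as the paper: both reindex the product over $\vec k$ by the reduced denominators $\vec d\mid\vec n$ and then use the invariance of $\cos$ under $j\mapsto d-j$ to collapse $\tilde J_{d_i}$ onto $J_{d_i}$, picking up a factor of $2$ precisely when $d_i\ge 3$. The paper phrases this as an explicit two-level set decomposition of $\{k/n\}$, whereas you phrase it as a free involution with a fixed-point count for $d_i\in\{1,2\}$; these are the same argument in different clothing.
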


For $\vec{d} = (d_1,\dots, d_q)$, 
let $V = V_{\vec{d}} = \{j \in \{1,2,\dots,q\} : d_j \ge 3\}$. 
We define a graph $\Gamma(V)$ obtained from $V$ by adding an unoriented edge
between each pair of distinct $i$ and $j$ satisfying $\gcd(d_i,d_j) \ge 3$.  
We denote by $\tilde{\beta}_0(\Gamma(V))$ the $0$th reduced Betti
number, i.e., one less than 
the number of connected components of $\Gamma(V)$. 
We understand $\tilde{\beta}_0(\Gamma(\emptyset))
= 0$ when $V = \emptyset$. Then we have the following. 
\begin{thm}\label{thm:numoforbits}
For $\vec{d} = (d_1,d_2,\dots,d_q)$, let $N=d_1\cdots d_q$
 and denote the number of 
$(\Z/N\Z)^{\times}$-orbits in $J_{d_1} \times \cdots \times
 J_{d_q}$ by $\orb(\vec{d})$, where 
the $(\Z/N\Z)^{\times}$ acts on $J_{d_1} \times \cdots
\times J_{d_q}$ as the component-wise mulitiplication, 
i.e., $a \cdot (j_1, \dots, j_q) := 
(a j_1, \dots, a j_q)$ for $a \in (\Z/N\Z)^{\times}$. 
Then,  
\[
 \orb(\vec{d})
 = \frac{\prod_{i=1}^q
 \tilde{\phi}(d_i)}{\tilde{\phi}(\lcm(d_1,d_2,\dots,d_q))}
 \times 2^{\tilde{\beta}_0(\Gamma(V_{\vec{d}}))}. 
\]
\end{thm}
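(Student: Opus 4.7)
The plan is to recast the orbit count as a \emph{free} action on $X := \prod_{i=1}^q (\Z/d_i\Z)^\times$ and then measure the acting group through a CRT computation whose combinatorics is encoded by $\Gamma(V_{\vec d})$. The first step would be to reduce to a faithful setting: the $(\Z/N\Z)^\times$-action on $J_{d_1} \times \cdots \times J_{d_q}$ factors through $H := (\Z/L\Z)^\times$ with $L := \lcm(d_1,\ldots,d_q)$, and the diagonal embedding $H \hookrightarrow \prod_i (\Z/d_i\Z)^\times$ is injective by the very definition of lcm. Lifting from $\prod_i J_{d_i}$ to $X$, the group $H$ acts diagonally and the sign group $E := \{\pm 1\}^V$ (with $V = V_{\vec d}$) acts componentwise on coordinates $i \in V$; the two actions commute. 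Since $\prod_i J_{d_i}$ is a set of representatives for $X/E$, $\orb(\vec d)$ equals the number of orbits on $X$ of the subgroup $G \le \mathrm{Perm}(X)$ generated by $H$ and $E$.

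I would then check that $G$ acts freely on $X$. For any $x \in X$, a pair $(a,\epsilon) \in H \times E$ fixes $x$ iff $a \equiv \epsilon_i \pmod{d_i}$ for all $i \in V$ (the conditions at $i \notin V$ are automatic because $d_i \in \{1,2\}$ and $a$ is coprime to $L$), so $\mathrm{Stab}_{H \times E}(x)$ equals the kernel $K$ of $H \times E \twoheadrightarrow G$, independent of $x$. Consequently all $G$-orbits have the common size $|G|$ and
\[
\orb(\vec d) = \frac{|X|}{|G|} = \frac{\prod_i \phi(d_i) \cdot |K|}{\phi(L)\, 2^{|V|}}.
\]
To evaluate $|K|$, I fix $\epsilon \in E$: by CRT, the system $a \equiv \epsilon_i \pmod{d_i}$ ($i \in V$) is solvable modulo $\lcm(d_i : i \in V)$ iff $\epsilon_i \equiv \epsilon_j \pmod{\gcd(d_i,d_j)}$ for all $i,j \in V$, a condition which is automatic when $\gcd(d_i,d_j) \le 2$ and which forces $\epsilon_i = \epsilon_j$ precisely when $\gcd(d_i,d_j) \ge 3$, i.e.\ when $i$ and $j$ are adjacent in $\Gamma(V)$. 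A short case analysis (distinguishing $L = \lcm(d_i : i \in V)$ from $L = 2\lcm(d_i : i \in V)$) shows that every admissible $\epsilon$ lifts to a unique unit $a \in H$. Hence $|K|$ equals the number of $\{\pm 1\}$-colourings of $V$ constant on each connected component of $\Gamma(V)$, namely $2^{\beta_0(\Gamma(V))}$.

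Substituting $|K| = 2^{\beta_0(\Gamma(V))}$ into the orbit formula and simplifying via $\phi(d_i) = 2\tilde\phi(d_i)$ for $i \in V$, $\phi(d_i) = \tilde\phi(d_i) = 1$ for $i \notin V$, and $\phi(L) = 2 \tilde\phi(L)$, $\beta_0 = \tilde\beta_0 + 1$ (both valid whenever $V \ne \emptyset$, equivalently $L \ge 3$) yields the claimed identity; the degenerate case $V = \emptyset$ is a one-line check. The decisive step will be the middle one --- identifying $\Gamma(V)$ as the CRT-compatibility graph --- because once admissible $\epsilon$'s are recognized as the locally constant $\{\pm 1\}$-functions on $\Gamma(V)$, the count $2^{\beta_0(\Gamma(V))}$, and hence the factor $2^{\tilde\beta_0(\Gamma(V))}$ appearing in the theorem, emerges transparently.
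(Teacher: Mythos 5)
Your argument is correct and it is essentially the same as the paper's: both proofs establish that a suitable group acts freely (so orbits all have the same size) and both compute the size of a ``sign-compatibility'' subgroup by reducing, via the Chinese Remainder Theorem, to counting $\{\pm1\}$-functions on $V_{\vec d}$ that are constant on the connected components of $\Gamma(V_{\vec d})$, giving $2^{\beta_0}$. The only difference is presentational: you lift to $X=\prod_i(\Z/d_i\Z)^\times$ and count orbits of the group generated by $(\Z/L\Z)^\times$ and $E=\{\pm1\}^V$, computing the kernel of $H\times E\twoheadrightarrow G$; the paper stays on $\prod_i J_{d_i}$ and shows that $(\Z/L\Z)^\times$ modulo its sign-compatible subgroup $H$ acts freely, then identifies $|H|=2^{\beta_0(\Gamma(V))}$ by the same CRT analysis. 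The two subgroups are in canonical bijection via $g\leftrightarrow(g,\epsilon(g))$, and the final orbit count is the same computation.
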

While the above is the general form for the number of
orbits, in certain special cases, this is greatly
simplified as seen below. 
\begin{ex}\label{ex:numberoforbits}
\textup{(i)} For $q=2$, 
$\orb(d_1,d_2) = \tilde{\phi}(\gcd(d_1,d_2))$. \\ 
\textup{(ii)} For $d_1=d_2=\cdots = d_q = d$, 
 $\orb(\vec{d}) = \tilde{\phi}(d)^{q-1}$. 
\end{ex}

Next, for the purpose of factorizing $\Psi_{\vec{d}}(x)$, 
we introduce a new polynomial. 
For a subset $\mathcal{O} \subset J_{d_1} \times \cdots \times J_{d_q}$,
we define
\begin{equation}
\Psi_{\vec{d}}(x; \mathcal{O})
:=
\prod_{(j_1,\ldots,j_q) \in \mathcal{O}} \Big(x -
2\sum_{i=1}^q \cos \frac{2\pi j_i}{d_i}\Big).  
\label{eq:Psiorbit}
\end{equation}
It is obvious that this function has the following
multiplicativity for the disjoint union:
\[
\Psi_{\vec{d}}(x; \mathcal{O}_1 \sqcup \mathcal{O}_2)
= \Psi_{\vec{d}}(x; \mathcal{O}_1) \Psi_{\vec{d}}(x;
\mathcal{O}_2). 
\]
From \tref{thm:numoforbits}, we obtain the following
factorization. 
\begin{cor}\label{cor:further_factor}
Let $\sqcup_{i=1}^{\orb(\vec{d})} \mathcal{O}_i$ 
be the decomposition of $J_{d_1} \times\cdots\times J_{d_q}$ 
into $(\Z/N\Z)^\times$-orbits.
Then, 
\begin{equation}
\Psiprime_{\vec{d}}(x) = \prod_{i=1}^{\orb(\vec{d})} \Psi_{\vec{d}}(x;
 \mathcal{O}_i).
\label{eq:phiorbit}
\end{equation}
The degree of each $\Psi_{\vec{d}}(x ; \cO_i) \in \Z[x]$ is 
$\tilde{\phi}(\lcm(d_1,\dots,d_q)) 2^{-
 \tilde{\beta}_0(\Gamma(V_{\vec{d}}))}$. 
\end{cor}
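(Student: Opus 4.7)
The identity (\ref{eq:phiorbit}) is immediate from the stated multiplicativity of $\Psi_{\vec{d}}(x;\cdot)$ and the fact that $\bigsqcup_i \mathcal{O}_i$ is a partition of $J_{d_1}\times\cdots\times J_{d_q}$: one just groups the linear factors of $\Psi_{\vec{d}}(x) = \Psi_{\vec{d}}(x,1)$ according to the orbit their index tuple lies in. The real content of the corollary is therefore (a) integrality of each factor $\Psi_{\vec{d}}(x;\mathcal{O}_i)$ and (b) the formula for their common degree.

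For (a), set $N = d_1\cdots d_q$ and, for $\vec{j}=(j_1,\dots,j_q)$, write $\alpha(\vec{j}) = 2\sum_{i=1}^q \cos(2\pi j_i/d_i) \in \Q(\zeta_N)$, where $\zeta_N = e^{2\pi i / N}$. Since $\zeta_{d_i}=\zeta_N^{N/d_i}$, the element of $\Gal(\Q(\zeta_N)/\Q)\cong(\Z/N\Z)^\times$ corresponding to $a$ sends $\alpha(\vec{j})$ to $\alpha(a\vec{j})$, where $a\vec{j}=(aj_1\bmod d_1,\dots,aj_q\bmod d_q)$; the $\pm$-symmetry of cosine then lets us replace each entry by its representative in $J_{d_i}$, recovering exactly the $(\Z/N\Z)^\times$-action used in \tref{thm:numoforbits}. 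Consequently $a$ permutes the linear factors of $\Psi_{\vec{d}}(x;\mathcal{O}_i)$, so its coefficients are Galois-invariant algebraic integers and hence lie in $\Z$.

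For (b), since $\deg \Psi_{\vec{d}}(x;\mathcal{O}_i) = |\mathcal{O}_i|$ by construction, it suffices to show all orbits have the same cardinality. The stabilizer of $\vec{j}$ consists of $a\in(\Z/N\Z)^\times$ with $aj_i\equiv\pm j_i\pmod{d_i}$ for every $i$; since $\gcd(j_i,d_i)=1$, this reduces to $a\equiv\pm1\pmod{d_i}$ for all $i\in V_{\vec{d}}$, a condition independent of $\vec{j}$. Hence every orbit has size $\phi(N)/|H|$ for a common subgroup $H\le(\Z/N\Z)^\times$, and dividing $|J_{d_1}\times\cdots\times J_{d_q}|=\prod_i\tilde\phi(d_i)$ by the orbit count from \tref{thm:numoforbits} produces the claimed degree $\tilde\phi(\lcm(d_1,\dots,d_q))\cdot 2^{-\tilde{\beta}_0(\Gamma(V_{\vec{d}}))}$.

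The step I expect to require the most care is the identification of the stabilizer: the $\pm1$ ambiguity comes from the two-to-one quotient $(\Z/d_i\Z)^\times \to J_{d_i}$, and one must check that the stabilizer is literally the \emph{same} subgroup $H$ for every $\vec{j}$ (not merely of the same size point by point), so that all orbits are genuinely cosets of $H$ and thus of equal cardinality. Once this is nailed down, the remainder is routine bookkeeping against \tref{thm:numoforbits}; in particular, integrality does not require the values $\alpha(\vec{j})$ to be distinct within an orbit, since the Galois argument permutes linear factors with multiplicity regardless.
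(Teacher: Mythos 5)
Your proposal is correct and follows essentially the same route the paper intends: the factorization is immediate from multiplicativity, integrality is exactly Proposition~\ref{proposition:injective}(1), and the degree formula is the total cardinality $\prod_i \tilde\phi(d_i)$ divided by the orbit count of Theorem~\ref{thm:numoforbits}, made legitimate because the $(\Z/N\Z)^\times$-action has the same stabilizer $H$ at every point --- which is precisely the content of Lemma~\ref{lem:freeaction} (the free action of $G/H$). Your careful note about why the stabilizer is literally the same subgroup for every $\vec{j}$, rather than merely of the same size, is exactly the point that Lemma~\ref{lem:freeaction} is structured to establish, so your reconstruction identifies the right load-bearing step.
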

The polynomials $\Psi_{\vec{d}}(x ; \cO_i)$ may 
themselves be reducible. 
A complete characterization of the irreducibility of
$\Psi_{\vec{d}}(x ; \cO_i)$ remains open so far. 
Further discussion of this point is given in
Section~\ref{sec:polynomials}. 

This paper is organized as follows. 
In \sref{sec:eigenvalues}, we compute the eigenvalues of adjacency operators and Laplacians defined on the periodic cubical lattice. 
In \sref{sec:zeta}, we recall the results for zeta functions of
hypergraphs, and we present the results for cubical complexes in the cases
$d=q,q-1$. 
In \sref{sec:polynomials}, we prove \tref{thm:numoforbits}
and give a condition for $\Psi_{\vec{d}}(x ;\cO)$ 
to be irreducible (\pref{proposition:injective}). 
Also, we give further discussion of factorization and
present some observations for the case $q=2$. 

\section{Eigenvalues for periodic cubical complexes}\label{sec:eigenvalues}

\subsection{Eigenvalues of adjacency matrices}

The set $Y_d = \sqcup_{\sigma \in K_{d-1}} Y_{\sigma}$ of 
$d$-cubes (i.e., $|\sigma|=d$)
can be 
regarded as the set $K_{d-1} \times \Z_{n_1} \times\cdots \times \Z_{n_q}$ 
if we identify the intervals $I_i=[v_i, v_i+1] \in \cI_i^{\circ}$ 
and $I_i=[v_i, v_i]  \in \cP_i^{\circ}$ with the points $v_i \in \Z_{n_i}$ as follows:   
\begin{align*}
Y_d &= \bigsqcup_{\sigma \in K_{d-1}} \{I_1\times\dots\times I_q \subset \T^q \colon
I_i\in\cI_i^{\circ}~{\rm if~}  i\in\sigma;~
I_i\in\cP_i^{\circ}~{\rm otherwise} 
\}\\
 &\cong \{(\sigma, v_1,\dots,v_q) : \sigma \in K_{d-1}, \
 \text{$v_i \in \Z_{n_i}$ for $i \in K_0$}
\}.  
 \label{eq:kforms}
\end{align*}
Here, note that $|Y_d| = {q \choose d}|\vec{n}|$.

We write $V := \Z_{n_1} \times \cdots \times \Z_{n_q}$. 
For $d=0,1,\dots,q$, the space $C^d(Y)$ of $d$-cochains on
$Y$ is regarded as 
$C^d(K_{d-1} \times V)$, the space of functions on $K_{d-1} \times V$, 
under the identification introduced above. 
We continue to make this identification below although we use the
notation $C^d(Y)$. 

Now, we define the incidence operator $M_d : C^{d}(Y) \to C^{d+1}(Y)$ by 
\begin{align*}
M_d f(\eta, \v) 
&= \sum_{j \in \eta} 
\{
f(\eta_j, \v) + f(\eta_j, S_j \v)
\} \quad \text{for $\eta \in K_{d}$}, 
\end{align*}
where $\eta_j = \eta \setminus \{j\}$, 
$\v = (v_1,\dots,v_q) \in V$ and $S_j \v = (v_1,\dots, v_j+1, \dots, v_q)$. 
Equivalently, we can write $M_d$ as 
\[
M_d f(\eta, \v) 
= \sum_{\sigma \in K_{d-1} : \sigma \subset \eta} 
\{
f(\sigma, \v) + f(\sigma, S_{\eta \setminus \sigma} \v)
\} \quad \text{for $\eta \in K_{d}$}. 
\]
The matrix representation of $M_d$ is nothing but the incidence matrix 
between $Y_d$ and $Y_{d-1}$. 
The dual operator $M_d^* : C^{d+1}(Y) \to C^{d}(Y)$ of
$M_d$ with respect to the inner product
\begin{equation}
\bra f, g \ket_{C^d(Y)} :=
|V|^{-1} \sum_{\sigma \in K_{d-1}} \sum_{\v \in V} f(\sigma, \v)
\overline{g(\sigma, \v)}
\label{eq:innerproduct} 
\end{equation}
is given by 
\[
M_d^* f(\sigma, \v) 
= \sum_{\eta \in K_{d} : \eta \supset \sigma} 
\{
f(\eta, \v) + f(\eta, S_{\eta \setminus \sigma}^{-1} \v)
\} \quad \text{for $\sigma \in K_{d-1}$}. 
\]

Let $\hat{\Z}_{n_j}$ be the character group of $\Z_{n_j}$.  
An element $z_j$ of $\hat{\Z}_{n_j}$ is expressed 
as $z_j = \exp(2\pi \sqrt{-1} k_j /n_j)$ for some $k_j \in \Z_{n_j}$. 
We write $\hat{V} := \hat{\Z}_{n_1} \times \cdots \times \hat{\Z}_{n_q}$. 
For $\z = (z_1, \dots, z_q) \in \hat{V}$, 
we consider a subspace $C_{\z}^d$ of $C^d(Y)$ defined by 
\[
 C_{\z}^d := C_{\z}^d(Y) 
:=  \{f \in C^d(Y) : S_j f = z_j f 
\ \text{for any $j \in K_0$}\},  
\]
where $S_j f(\sigma, \v) := f(\sigma, S_j \v)$. 
It is clear that $\dim C_{\z}^d = |K_{d-1}| = {q \choose d}$, and 
$C_{\z}^d$ and $C_{\w}^d$ are orthogonal unless $\z = \w$. Hence, 
we have the orthogonal decomposition 
\[
 C^d(Y) = \bigoplus_{\z \in \hat{V}}  C^d_{\z}. 
\]

For $\z = (z_1, \dots, z_q) \in \hat{V}$, we also consider
the map 
$U_{\z} : C^d(K_{d-1}) \to C^d_{\z}(Y)$ defined by
\[
U_{\z}f(\sigma, \v) = f(\sigma) \z^{\v} \quad (\sigma \in K_{d-1}, \ \v \in V), 
\]
where
$\z^{\v} = \prod_{j=1}^q z_j^{v_j}$. 
Then, it is easy to see that $\bra U_{\z}f, U_{\z}g \ket_{C^d(Y)} 
= \bra f, g\ket_{C^d(K_{d-1})}$, i.e., 
$U_{\z}$ is unitary. 
The inverse map $U_{\z}^{-1} : C_{\z}^d(Y) \to C^d(K_{d-1})$ 
is the finite Fourier transform given by 
\[
(U_{\z}^{-1} f)(\sigma) 
= \frac{1}{|V|} 
\sum_{\v \in V} f(\sigma, \v) \z^{-\v} \quad \text{for $\sigma \in K_{d-1}$}. 
\]
Since $f(\sigma, \v) \in C^d_{\z}$, we see that 
$(U_{\z}^{-1} f)(\sigma) = f(\sigma, \zero)$. 

\begin{lem}
For $d=0,1,\dots,q-1$, 
the operator $M_d$ (also $M_d^*$) preserves each fiber of $\z$, i.e., 
 $M_d C_{\z}^d \subset C_{\z}^{d+1} $ (also
 $M_d^* C_{\z}^{d+1} \subset C_{\z}^{d}$). 
\end{lem}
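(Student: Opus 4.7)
The plan is to verify directly that $M_d$ (and dually $M_d^*$) commutes with each shift operator $S_k$, $k \in K_0$. Since $C^d_{\z}$ is by definition the simultaneous eigenspace on which $S_k$ acts by the scalar $z_k$, any operator commuting with all the $S_k$'s preserves this eigenspace decomposition, which is exactly what we need.

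Concretely, I would fix $f \in C^d_{\z}$ and compute $S_k M_d f(\eta,\v) = M_d f(\eta, S_k\v)$ from the definition
\[
M_d f(\eta, \v) = \sum_{j \in \eta}\bigl\{f(\eta_j, \v) + f(\eta_j, S_j\v)\bigr\}.
\]
Applying $S_k$ amounts to replacing $\v$ by $S_k \v$ in each summand. Using that the shifts on $V = \Z_{n_1}\times\cdots\times\Z_{n_q}$ commute ($S_j S_k = S_k S_j$) and that $f$ is an eigenvector of every $S_k$ with eigenvalue $z_k$, each term $f(\eta_j, S_k\v)$ becomes $z_k f(\eta_j,\v)$ and each $f(\eta_j, S_jS_k\v) = f(\eta_j, S_kS_j\v)$ becomes $z_k f(\eta_j, S_j\v)$. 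Factoring out $z_k$ yields $S_k M_d f = z_k M_d f$ for every $k$, so $M_d f \in C^{d+1}_{\z}$.

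The argument for $M_d^*$ is the same, using the formula
\[
M_d^* f(\sigma, \v) = \sum_{\eta \supset \sigma}\bigl\{f(\eta, \v) + f(\eta, S_{\eta\setminus\sigma}^{-1}\v)\bigr\},
\]
together with the commutation $S_k S_{\eta\setminus\sigma}^{-1} = S_{\eta\setminus\sigma}^{-1} S_k$ and the eigenvalue property of $f$.

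There is essentially no obstacle here: the statement is a routine verification that the local boundary/coboundary-type operators commute with the translation action defining the Fourier decomposition. The only point to be careful about is that the second summand in $M_d$ involves a shift $S_j$ in the $V$-variable, but since all the shifts commute with one another and $f$ is a common eigenvector, this causes no issue.
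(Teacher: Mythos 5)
Your verification is correct and is exactly the routine computation the paper implicitly relies on when it writes ``It is easily verified'': commuting $S_k$ past the defining sum for $M_d$ (and $M_d^*$), using $S_jS_k = S_kS_j$ and the eigenvector property of $f\in C^d_{\z}$, factors out $z_k$ and shows $M_d f \in C^{d+1}_{\z}$. Nothing more is needed, and your argument fills in the omitted details faithfully.
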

\begin{proof}
It is easily verified. 
\end{proof}

The restriction $M_d$ on each fiber $\z$ is called the twisted operator of $M_d$ 
and denoted by $M_d(\z)$. Thus, we have the following direct
sum decomposition: 
\[
 M_d = \oplus_{\z \in \Vhat} M_d(\z) : \bigoplus_{\z \in \Vhat} C_{\z}^d
\to \bigoplus_{\z \in \Vhat} C_{\z}^{d+1}. 
\]
The situation for $M_d^*: C^{d+1}(Y) \to C^d(Y)$ is
similar.

\begin{lem}\label{lem:mhat}
For $\z \in \Vhat$, 
\begin{align*}
(M_d(\z) f)(\eta, \v)
 &= \sum_{\sigma \in K_{d-1} : \sigma \subset \eta}
 (1+z_{\eta \setminus \sigma}) f(\sigma, \v), \\ 
(M_d^*(\z)f)(\sigma,\v)
 &= \sum_{\eta \in K_{d} : \eta \supset \sigma} 
 (1+z_{\eta\setminus \sigma}^{-1}) f(\eta, \v).  
\end{align*}
\end{lem}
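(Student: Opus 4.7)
The argument is essentially a direct unpacking of the definitions of $M_d$ and $M_d^*$ combined with the defining property of the subspace $C^d_\z$, namely $S_j f = z_j f$ for every $j \in K_0$. The only conceptual observation needed is that for $\eta \in K_d$ and $\sigma \in K_{d-1}$ with $\sigma \subset \eta$, the difference $\eta \setminus \sigma$ consists of a single vertex, call it $j = j(\eta,\sigma) \in K_0$. Hence the shift $S_{\eta \setminus \sigma}$ appearing in the definitions is simply $S_j$, and the subscript $z_{\eta \setminus \sigma}$ in the target formula is well-defined as $z_j$.

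The plan is then as follows. For the first identity, I fix $f \in C^d_\z$ and $\eta \in K_d$, start from the definition
\[
M_d f(\eta, \v) = \sum_{\sigma \in K_{d-1},\, \sigma \subset \eta}\bigl\{f(\sigma, \v) + f(\sigma, S_{\eta \setminus \sigma} \v)\bigr\},
\]
and use the relation $f(\sigma, S_j \v) = z_j f(\sigma, \v)$, which is just the pointwise form of $S_j f = z_j f$. Factoring $f(\sigma, \v)$ out of each summand yields the factor $(1 + z_{\eta \setminus \sigma})$ claimed.

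For the second identity, the same equivariance applied with $S_j^{-1}$ in place of $S_j$ (one obtains $f(\eta, S_j^{-1} \v) = z_j^{-1} f(\eta, \v)$ by substituting $\v \mapsto S_j^{-1} \v$ in the previous relation) turns
\[
M_d^* f(\sigma, \v) = \sum_{\eta \in K_d,\, \eta \supset \sigma}\bigl\{f(\eta, \v) + f(\eta, S_{\eta \setminus \sigma}^{-1} \v)\bigr\}
\]
into the asserted sum with the factor $(1 + z_{\eta\setminus\sigma}^{-1})$. Since both restrictions stay within the correct $\z$-fiber by the preceding lemma, each identity also exhibits $M_d(\z)$ and $M_d^*(\z)$ as operators between the appropriate finite-dimensional spaces $C^d_\z$ and $C^{d+1}_\z$.

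There is no real obstacle here: the statement is a mechanical consequence of the definitions and of the character property $S_j f = z_j f$ that cuts out $C_\z^d$. The only place where one must be slightly careful is in recognizing that $|\eta \setminus \sigma| = 1$ so that a single character value $z_j$ (rather than a product) is produced; this is what makes the compact notation $z_{\eta\setminus\sigma}$ meaningful.
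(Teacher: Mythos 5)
Your proposal is correct and follows exactly the same route as the paper's proof: unpack the definition of $M_d$ (resp.\ $M_d^*$) and apply the fiber condition $S_j f = z_j f$ to factor out $(1 + z_{\eta\setminus\sigma})$ (resp.\ $(1 + z_{\eta\setminus\sigma}^{-1})$). The paper states this in two lines and says the dual case is similar; your version merely spells out the same observations, including the harmless point that $|\eta\setminus\sigma|=1$.
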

\begin{proof} By the definition of $M_d$ and 
$f \in C_{\z}^d$, we see that 
\begin{align*}
M_d(\z) f(\eta, \v) 
&= \sum_{\sigma \in K_{d-1} : \sigma \subset \eta} 
\{f(\sigma, \v) + f(\sigma, S_{\eta \setminus \sigma} \v)\} \\
&= \sum_{\sigma \in K_{d-1} : \sigma \subset \eta} 
(1 + z_{\eta \setminus \sigma}) f(\sigma, \v). 
\end{align*}
 The proof for $M_d^*(\z)$ is similar. 
\end{proof}

The operator $\tilde{M}_d(\z) := U_{\z}^{-1} M_d(\z) U_{\z} : C^d(K_{d-1}) \to C^{d+1}(K_{d})$ is unitarily equivalent to $M_d(\z)$ so that we abuse the notation $M_d(\z)$ for 
$\tilde{M}_d(\z)$ in what follows. 

The matrix representation of $M_d(\z)$ is the $|K_{d}| \times |K_{d-1}|$-matrix 
whose $(\eta, \sigma)$-element is given by 
$(1+z_{\eta \setminus \sigma}) \one(\sigma \subset \eta)$, 
where $\one(\sigma \subset \eta) = 1$ if $\sigma \subset \eta$; $0$ otherwise. 
Similarly, the matrix representation of $M_d^*(\z)$ is the $|K_{d-1}| \times |K_{d}|$-matrix  
whose $(\sigma, \eta)$-element is given by 
$(1+z^{-1}_{\eta \setminus \sigma}) \one(\sigma \subset \eta)$. 

Let $\aup_d = M_d^*M_d$ and $\adown_d = M_{d-1}M_{d-1}^*$, both acting on
  $C^d(Y)$. They also have the direct sum decompositions  
\[
 \aup_d = \oplus_{\z \in \Vhat} \aup_d(\z), \  
 \adown_d = \oplus_{\z \in \Vhat} \adown_d(\z), 
\] 
where $\aup_d(\z) = M_d^*(\z) M_d(\z)$
and $\adown_d(\z) = M_{d-1}(\z) M_{d-1}^*(\z)$ act on 
$C^d_{\z}$. 

In the following lemma, we use the metric on $K_{d-1}$ defined by 
$\rho(\sigma, \sigma') := |\sigma \setminus \sigma'| 
(= |\sigma' \setminus \sigma|)$. 
 \begin{lem}\label{lem:spec}
The $(\sigma,\sigma')$-element of the 
matrix representation of $\aup_d(\z)$ is given by 
\begin{align*}
a^{\mathrm{up}}_{\sigma \sigma'}(\z) 
&= 
\begin{cases}
\sum_{\eta \in K_d : \eta \supset \sigma}(1+z^{-1}_{\eta \setminus \sigma}) 
(1+z_{\eta \setminus \sigma}) 
& \text{if $\rho(\sigma,  \sigma')=0$ (i.e., $\sigma = \sigma'$)}, \\
(1+z_{\sigma' \setminus \sigma}^{-1}) 
(1+z_{\sigma \setminus \sigma'}) & \text{if $\rho(\sigma, \sigma')=1$}, \\
0 & \text{if $\rho(\sigma, \sigma') \ge 2$}, \\
\end{cases}
\end{align*}
and the set of eigenvalues of $\aup_d$ coincides with the union of the
  sets of eigenvalues of $\aup_d(\z)
  = (a^{\mathrm{up}}_{\sigma \sigma'}(\z))_{\sigma, \sigma' \in K_{d-1}}$
  for $\z \in \hat{V}$, i.e.,  
\[
  \spec(\aup_d)
  = \bigsqcup_{\z \in \hat{V}} \spec \Big(\aup_d(\z) \Big). 
\]
Similarly, 
the $(\sigma, \sigma')$-element of the matrix representation of $\adown_d(\z)$ 
is given by 
\begin{align*}
a^{\mathrm{down}}_{\sigma \sigma'}(\z) 
 &=
\begin{cases}
\sum_{\tau \in K_{d-2} : \tau \subset \sigma} 
(1+z_{\sigma \setminus \tau}) (1+z_{\sigma \setminus
 \tau}^{-1}) & \text{if $\rho(\sigma,  \sigma')=0$}, \\ 
(1+z_{\sigma \setminus \sigma'}) 
(1+z_{\sigma' \setminus \sigma}^{-1}) & \text{if $\rho(\sigma, \sigma')=1$}, \\
0 & \text{if $\rho(\sigma, \sigma') \ge 2$}, \\
\end{cases}
\end{align*}
and the set of eigenvalues of $\adown_d$ coincides with the union of the
  sets of eigenvalues of $\adown_d(\z)
  = (a^{\mathrm{down}}_{\sigma \sigma'}(\z))_{\sigma, \sigma' \in K_{d-1}}$
  for $\z \in \hat{V}$, i.e.,  
\[
  \spec(\adown_d)
  = \bigsqcup_{\z \in \hat{V}} \spec \Big(\adown_d(\z) \Big). 
\]
\end{lem}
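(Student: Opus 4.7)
The plan is to treat the two claims separately: first derive the matrix entries of $\aup_d(\z)$ and $\adown_d(\z)$ by straight matrix multiplication from the formulas in \lref{lem:mhat}, and then deduce the spectral statement from the direct sum decomposition $M_d = \oplus_{\z \in \Vhat} M_d(\z)$ already established.

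For the entries of $\aup_d(\z) = M_d^*(\z) M_d(\z)$, I would use that the matrix of $M_d(\z)$ has $(\eta,\sigma)$-entry $(1+z_{\eta\setminus\sigma})\one(\sigma \subset \eta)$ and the matrix of $M_d^*(\z)$ has $(\sigma,\eta)$-entry $(1+z_{\eta\setminus\sigma}^{-1})\one(\sigma\subset\eta)$ (the conjugate, since $\overline{z_j} = z_j^{-1}$). Multiplying, the $(\sigma,\sigma')$-entry equals
\[
\sum_{\eta \in K_d,\ \eta \supset \sigma,\ \eta \supset \sigma'} (1+z_{\eta \setminus \sigma}^{-1})(1+z_{\eta \setminus \sigma'}).
\]
Now I would split into cases according to $\rho(\sigma,\sigma')$: if $\sigma=\sigma'$ the sum runs over all $\eta\supset\sigma$, giving the first line; if $\rho(\sigma,\sigma')=1$, then $|\sigma\cup\sigma'|=d+1$, so the only eligible $\eta$ is $\eta=\sigma\cup\sigma'$, for which $\eta\setminus\sigma=\sigma'\setminus\sigma$ and $\eta\setminus\sigma'=\sigma\setminus\sigma'$, giving the second line; if $\rho(\sigma,\sigma')\ge 2$, then $|\sigma\cup\sigma'|\ge d+2$, so no $(d+1)$-subset contains both and the entry vanishes.

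The computation for $\adown_d(\z) = M_{d-1}(\z) M_{d-1}^*(\z)$ is entirely parallel: its $(\sigma,\sigma')$-entry is
\[
\sum_{\tau \in K_{d-2},\ \tau \subset \sigma,\ \tau \subset \sigma'} (1+z_{\sigma\setminus\tau})(1+z_{\sigma'\setminus\tau}^{-1}),
\]
and the same trichotomy on $\rho(\sigma,\sigma')$ (now using that $|\sigma\cap\sigma'|\ge d-1$ is needed for a common $\tau$ of size $d-1$ to exist, forcing $\tau = \sigma\cap\sigma'$ in the $\rho=1$ case) yields the three displayed formulas.

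For the spectral identities, since $M_d = \oplus_{\z\in\Vhat} M_d(\z)$ and $M_d^* = \oplus_{\z\in\Vhat} M_d^*(\z)$ with respect to the orthogonal decomposition $C^d(Y) = \oplus_\z C^d_\z$, the products $\aup_d = M_d^* M_d$ and $\adown_d = M_{d-1} M_{d-1}^*$ inherit the direct sum structure $\aup_d = \oplus_\z \aup_d(\z)$ and $\adown_d = \oplus_\z \adown_d(\z)$, so their spectra are the disjoint unions (as multisets) of the spectra of the summands. Finally, transporting $\aup_d(\z)$ and $\adown_d(\z)$ from $C^d_\z(Y)$ to $C^d(K_{d-1})$ by the unitary $U_\z$ preserves eigenvalues, so the matrix representations computed above yield the claimed spectra. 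There is no substantive obstacle here; the main care point is the bookkeeping in the $\rho=1$ case, where one must correctly identify which single $\eta$ (resp.\ $\tau$) contributes and verify that the resulting $z$-factors are indexed by $\sigma\setminus\sigma'$ and $\sigma'\setminus\sigma$ as stated.
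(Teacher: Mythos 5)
Your proof is correct and follows essentially the same route as the paper: compute the $(\sigma,\sigma')$-entry of $\aup_d(\z) = M_d^*(\z)M_d(\z)$ and $\adown_d(\z) = M_{d-1}(\z)M_{d-1}^*(\z)$ by matrix multiplication using \lref{lem:mhat}, observe that the product forces $\eta \supset \sigma \cup \sigma'$ (resp.\ $\tau \subset \sigma \cap \sigma'$), and then split on $\rho(\sigma,\sigma')$ to identify the unique contributing $\eta$ (resp.\ $\tau$) or see that the sum is empty; the spectral claim is the immediate consequence of the direct sum decomposition over $\z \in \hat V$ already in place.
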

\begin{proof}
For $\z \in \Vhat$, we have 
 \begin{align*}
  a^{\mathrm{up}}_{\sigma\sigma'}(\z) 
  &= \sum_{\eta \in K_{d}}
  (1+z_{\eta \setminus \sigma}^{-1})
  (1+z_{\eta \setminus \sigma'})
  \one(\eta \supset \sigma \cup \sigma'), \\
 a^{\mathrm{down}}_{\sigma \sigma'}(\z) 
&= 
\sum_{\tau \in K_{d-2}}    
(1+z_{\sigma \setminus \tau}) 
  (1+z_{\sigma' \setminus \tau}^{-1})
    \one(\tau \subset \sigma \cap \sigma'). 
 \end{align*}
 If $\rho(\sigma, \sigma') \ge 2$, then
 $\one(\eta \supset \sigma \cup \sigma')=
 \one(\tau \subset \sigma \cap \sigma')=0$.
 If $\rho(\sigma, \sigma') =1$, then $\eta$ (resp. $\tau$) must coincide
 with $\sigma \cup \sigma'$ (resp. $\sigma \cap \sigma'$) when 
 $\eta \supset \sigma \cup \sigma'$ 
(resp. $\tau \subset \sigma \cap \sigma'$).
We thus obtain the desired expressions. 
\end{proof}

Note that when $q \ge 2d-1$, which is equivalent to
${q \choose d} \ge {q \choose d-1}$, we have 
\[
 \spec \Big(\adown_{d}(\z) \Big) = 
 \spec \Big(\aup_{d-1}(\z) \Big) \cup \{0\},
\]
with the multiplicity of $0$ being ${q \choose d} - {q \choose d-1}$,
and similarly, 
when $q \le 2d - 1$, we have 
\[
 \spec \Big(\aup_{d-1}(\z) \Big) = 
 \spec \Big(\adown_{d}(\z) \Big) \cup \{0\},
\]
with the multiplicity of $0$ being ${q \choose d-1} - {q \choose d}$. 

\begin{cor}\label{cor:aqdown} 
 The eigenvalues of $\adown_q : C^q(Y) \to C^q(Y)$
  on the periodic $q$-cubical lattice $Y$ 
are $\{\sum_{j=1}^q 2\{1 + \cos (2 \pi k_j/n_j)\} : k_j \in \Z_{n_j}\}$.  
\end{cor}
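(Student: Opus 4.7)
The proof plan is straightforward and exploits the fact that at the top dimension $d=q$, the complex $K_{q-1}$ of the complete complex on $\{1,\dots,q\}$ collapses to a single simplex, making the matrix $\adown_q(\z)$ one-dimensional. First I would observe that, since $K$ is taken to be the complete simplicial complex on $K_0=\{1,\dots,q\}$, the set $K_{q-1}$ of $(q-1)$-simplices consists of the single element $\sigma_0 = \{1,2,\dots,q\}$. Consequently, for each $\z\in\hat{V}$, the matrix $\adown_q(\z)$ acting on $C^q_{\z}\cong \C$ is a $1\times 1$ matrix, whose single entry is therefore its unique eigenvalue.

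Next I would apply the second formula of \lref{lem:spec} with $d=q$ and $\sigma=\sigma'=\sigma_0$, giving
\[
a^{\mathrm{down}}_{\sigma_0\sigma_0}(\z)
= \sum_{\tau \in K_{q-2},\,\tau\subset \sigma_0}
(1+z_{\sigma_0 \setminus \tau})(1+z_{\sigma_0 \setminus \tau}^{-1}).
\]
The subsets $\tau \subset \sigma_0$ of cardinality $q-1$ are exactly the complements $\sigma_0 \setminus \{j\}$ for $j=1,\dots,q$, so $\sigma_0 \setminus \tau$ runs over the singletons $\{j\}$. This reduces the entry to
\[
\sum_{j=1}^q (1+z_j)(1+z_j^{-1})
= \sum_{j=1}^q \bigl(2 + z_j + z_j^{-1}\bigr).
\]
Writing $z_j = \exp(2\pi\sqrt{-1}\,k_j/n_j)$ with $k_j \in \Z_{n_j}$ yields $z_j + z_j^{-1} = 2\cos(2\pi k_j/n_j)$, and hence the single eigenvalue of $\adown_q(\z)$ equals $\sum_{j=1}^q 2\{1+\cos(2\pi k_j/n_j)\}$.

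Finally, I would invoke the spectral decomposition statement in \lref{lem:spec}, namely $\spec(\adown_q) = \bigsqcup_{\z \in \hat{V}} \spec(\adown_q(\z))$, to conclude that the full spectrum is precisely the set $\{\sum_{j=1}^q 2\{1+\cos(2\pi k_j/n_j)\} : k_j \in \Z_{n_j}\}$ as $\z$ ranges over $\hat{V}$. There is no genuine obstacle here: the only step that requires any care is the identification $|K_{q-1}|=1$ together with the bookkeeping that $\sigma_0\setminus\tau$ ranges over the singletons $\{j\}$, which is what collapses the double sum in \lref{lem:spec} into the clean diagonal sum above.
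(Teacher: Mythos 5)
Your proof is correct and follows essentially the same route as the paper's, which also invokes \lref{lem:spec} to identify $\adown_q(\z)$ as the scalar $\sum_{j=1}^q (1+z_j^{-1})(1+z_j)$ and then reads off the spectrum over all $\z\in\hat V$. You have merely spelled out the intermediate bookkeeping (the identification $|K_{q-1}|=1$, the enumeration of $\tau\in K_{q-2}$, and the substitution $z_j=\exp(2\pi\sqrt{-1}\,k_j/n_j)$) that the paper leaves implicit.
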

\begin{proof}
From Lemma~\ref{lem:spec}, we know that 
$\adown_q(\z)$ is a scalar and equal to 
$\sum_{j=1}^q (1+z_j^{-1})(1+z_j)$. 
Therefore, the eigenvalues of $\adown_q$ are  
given by $\{\sum_{j=1}^q (1+z_j^{-1})(1+z_j) : z_j \in \hat{\Z}_{n_j}\}$.   
The assertion follows directly. 
\end{proof}

\subsection{Eigenvalues of Laplacians}

Techniques similar to those used in the previous section can
be applied to eigenvalue problems of Laplacians.  

For $\sigma, \tau \in K$ with $\tau \subset \sigma$ and $|\sigma
\setminus \tau|=1$, we write $\sgn(\sigma, \tau) = (-1)^{j-1}$ if $\sigma \setminus \tau$ is in the $j$th position 
of $\sigma$ in lexicographic order. 
For example, $\sgn(134, 34) = 1$, $\sgn(134, 14) = -1$ and $\sgn(134, 13) = 1$.  

For $d=0,1,\dots,q-1$,
let $\delta_d : C^d(Y) \to C^{d+1}(Y)$ be defined by 
\begin{align*}
 \delta_d f(\eta, \v)
 &= \sum_{\sigma \in K_{d-1} : \sigma \subset \eta} 
\sgn(\eta, \sigma) \{ f(\sigma, S_{\eta \setminus \sigma} \v) 
- f(\sigma, \v)\}
\quad \text{for $\eta \in K_{d}$}. 
\end{align*}
The dual operator of $\delta_d$,  $\delta_d^*$, is 
defined analogously to $M_d^*$ with respect to the inner product 
\eqref{eq:innerproduct}. 
As in the previous subsection, 
because both $\delta_d$ and $\delta^*_d$ preserve the fiber of $\z$, 
the operators $\delta_d$ and $\delta_d^*$ can be decomposed 
into the direct sums $\oplus_{\z \in \Vhat} \delta_d(\z)$
and $\oplus_{\z \in \Vhat} \delta_d^*(\z)$, respectively. 
As in the proof of \lref{lem:mhat}, 
we easily see that 
\begin{align*}
 \delta_d(\z) f(\eta, \v) 
 &=
 \sum_{\sigma \in K_{d-1} : \sigma \subset \eta}
 \sgn(\eta, \sigma) (z_{\eta \setminus \sigma} - 1) 
f(\sigma, \v) \quad \text{for $\eta \in K_{d}$}, \\
\delta_d^*(\z) f(\sigma, \v) 
 &= \sum_{\eta \in K_d : \eta \supset \sigma} 
\sgn(\eta,\sigma) (z_{\eta \setminus \sigma}^{-1} - 1) 
f(\eta, \v) \quad \text{for $\sigma \in K_{d-1}$}.
\end{align*}

\begin{lem}\label{lem:gtwice}
 For $d=0,1,\dots,q-1$, let $g : K_0 \to \C$ and 
define $G_d : C^d(Y) \to C^{d+1}(Y)$ by 
\[
G_df(\eta) = 
 \sum_{\sigma \in K_{d-1} : \sigma \subset \eta} 
\sgn(\eta, \sigma) g(\eta \setminus \sigma) f(\sigma) \quad (\eta \in K_d). 
\]
Then, $G_{d+1}G_d=0$. In particular, 
$\delta_{d+1}(\z) \delta_d(\z)=0$ for any $\z \in \hat{V}$. 
\end{lem}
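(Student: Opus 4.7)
The plan is to prove the stronger statement $G_{d+1}G_d = 0$ by the classical pairing-of-faces argument used to establish $\partial^2 = 0$ for simplicial boundary operators, and then to obtain $\delta_{d+1}(\z)\delta_d(\z) = 0$ as a corollary: after fixing $\v \in V$, the operator $\delta_d(\z)$ acts on $f(\,\cdot\,,\v)$ exactly as $G_d$ does with the specific choice $g(k) = z_k - 1$, so the general identity specializes to the desired one.

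First I would expand the composition and interchange the order of summation. For $f \in C^d(Y)$ and $\mu \in K_{d+1}$,
\[
(G_{d+1}G_df)(\mu)
= \sum_{\substack{\sigma \subset \mu \\ |\mu\setminus\sigma|=2}}
\Bigg( \sum_{\sigma \subsetneq \eta \subsetneq \mu}
  \sgn(\mu,\eta)\sgn(\eta,\sigma)\, g(\mu\setminus\eta)\, g(\eta\setminus\sigma) \Bigg) f(\sigma).
\]
For each fixed pair $\sigma \subset \mu$ with $|\mu\setminus\sigma|=2$, the inner sum has exactly two terms, indexed by the two choices of intermediate face $\eta = \mu\setminus\{a\}$ for $a \in \mu\setminus\sigma$.

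Next I would show that this inner sum vanishes. Writing $\mu\setminus\sigma = \{a_k, a_l\}$ with $a_k, a_l$ sitting at positions $k < l$ of $\mu$ in lexicographic order, a short case check of the sign convention — carefully tracking that removing $a_k$ first shifts the position of $a_l$ in $\mu\setminus\{a_k\}$ from $l$ down to $l-1$ — gives
\[
\sgn(\mu,\mu\setminus\{a_l\})\sgn(\mu\setminus\{a_l\},\sigma) = (-1)^{k+l}, \quad
\sgn(\mu,\mu\setminus\{a_k\})\sgn(\mu\setminus\{a_k\},\sigma) = -(-1)^{k+l}.
\]
Since the $g$-factor $g(\mu\setminus\eta)\, g(\eta\setminus\sigma)$ equals $g(a_k)g(a_l)$ regardless of which $\eta$ is chosen, the two contributions cancel, and hence $(G_{d+1}G_df)(\mu) = 0$ for every $\mu$.

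The hard part is really just the sign bookkeeping above; no deeper structural obstacle appears, since the argument is a direct transcription of the classical $\partial^2 = 0$ identity. The subtle point worth highlighting is that the cancellation crucially uses the \emph{commutativity} of the scalar product $g(a_k)g(a_l)$, which is exactly why the identity holds in the generality stated for any $g:K_0\to\C$, and in particular for $g(k) = z_k - 1$, yielding $\delta_{d+1}(\z)\delta_d(\z) = 0$.
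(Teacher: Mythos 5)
Your proof is correct and follows essentially the same route as the paper's: expand $G_{d+1}G_d$ as a double sum, group by the two-element set $\mu\setminus\sigma$ of removed vertices, and observe that the two orderings of removal produce opposite sign products while the $g$-factors agree. The only difference is that the paper dismisses the sign cancellation with ``which is equal to $0$ as usual,'' whereas you carry out the bookkeeping explicitly (correctly accounting for the index shift when $a_k$ with $k<l$ is removed first), which makes the argument self-contained.
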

\begin{proof}
For $f \in C^d(Y)$, 
\begin{align*}
(G_{d+1} G_d f)(\tau)
&= 
\sum_{\sigma \in K_{d} : \sigma \subset \eta}  
\sum_{\tau \in K_{d-1} : \tau \subset \sigma} 
 \sgn(\eta, \sigma) \sgn(\sigma, \tau) 
g(\eta \setminus \sigma) g(\sigma \setminus \tau) 
f(\tau) \\
&= 
\Big(\sum_{1 \le i < j \le q} g(i)g(j) 
\sum_{\{\eta \setminus \sigma, \sigma \setminus \tau\} = \{i,j\}}
 \sgn(\eta, \sigma) \sgn(\sigma, \tau) \Big) f(\tau) \\
&= 0. 
 \end{align*}
The last sum is taken separately for two cases,  
$(\eta \setminus \sigma, \sigma \setminus \tau)=(i,j)$ 
and $(j,i)$,  
which is equal to $0$ as usual. 
\end{proof}

From this lemma, we have the cochain complex 
\[
\cdots \to C^{d-1}_{\z} \stackrel{\delta_{d-1}(\z)}{\to} C^d_{\z} 
\stackrel{\delta_{d}(\z)}{\to} C^{d+1}_{\z} \to \cdots, 
\]
and hence the cohomology group 
$H^d_{\z} := \ker \delta_d(\z) / \im \delta_{d-1}(\z)$ is defined. 
\begin{lem}\label{lem:trivial-cohom}
For all $d=0,1,\dots, q-1$ and $\z \in \hat{V}$, 
the cohomology group $H^d_{\z}$ is trivial for $\z \not= \one$ and 
$H^d_{\one} = C_{\one}^d \simeq \C^{{q \choose d}}$. 
\end{lem}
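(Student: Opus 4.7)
The plan is to identify the twisted cochain complex $(C^{\bullet}_{\z}, \delta_{\bullet}(\z))$ with a Koszul complex on $\C^q$ and then invoke the standard exactness of Koszul complexes built from a nonzero vector.

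First I would transfer everything to $K_0$-indexed data. Using the unitary $U_{\z}$ whose inverse is $(U_{\z}^{-1} f)(\sigma) = f(\sigma, \zero)$, the twisted differential $\delta_d(\z)$ becomes an operator $\tilde{\delta}_d(\z) : C^d(K_{d-1}) \to C^{d+1}(K_d)$ given by
\[
\tilde{\delta}_d(\z) f(\eta) = \sum_{\sigma \in K_{d-1} : \sigma \subset \eta} \sgn(\eta, \sigma) (z_{\eta \setminus \sigma} - 1) f(\sigma).
\]
This is the operator $G_d$ of \lref{lem:gtwice} with $g(i) = z_i - 1$. Identifying $C^d(K_{d-1})$ with $\Lambda^d\C^q$ via $\sigma = \{i_1 < \cdots < i_d\} \mapsto e_{i_1} \wedge \cdots \wedge e_{i_d}$, the lexicographic sign $\sgn(\eta, \sigma)$ matches the Koszul sign produced by wedging, so $\tilde{\delta}_d(\z)$ becomes left exterior multiplication by
\[
g_{\z} := \sum_{i=1}^q (z_i - 1) e_i \in \C^q.
\]

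Now I would split into cases. If $\z = \one$ then every coefficient $z_i - 1$ vanishes, so $g_{\z} = 0$ and $\delta_d(\one) = 0$ for all $d$; hence $H^d_{\one} = C^d_{\one}$, which has dimension $|K_{d-1}| = \binom{q}{d}$. If $\z \ne \one$ then $g_{\z} \ne 0$, and I would pick any linear functional $\phi$ on $\C^q$ with $\phi(g_{\z}) = 1$. The associated interior multiplication $\iota_{\phi} : \Lambda^{\bullet}\C^q \to \Lambda^{\bullet - 1}\C^q$ is an antiderivation, and the derivation identity
\[
g_{\z} \wedge \iota_{\phi}(\omega) + \iota_{\phi}(g_{\z} \wedge \omega) = \phi(g_{\z}) \, \omega = \omega
\]
supplies a contracting homotopy for the Koszul complex, forcing $H^d_{\z} = 0$ in every degree.

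The only point requiring any care is the sign verification in the first step, namely that $\sgn(\eta, \sigma)$ — defined from the lexicographic position of the removed vertex — really coincides with the Koszul sign of wedging $e_{\eta \setminus \sigma}$ onto $e_{\sigma}$. This is a routine unpacking of definitions rather than a genuine obstacle; once it is in place, the rest is the standard Koszul exactness argument.
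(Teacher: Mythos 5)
Your proof is correct, and it takes a genuinely different route from the paper's. The paper argues topologically: it invokes the fact that the full cochain complex $(C^{\bullet}(Y), \delta_{\bullet})$ computes the cellular cohomology of the $q$-torus, so $H^d(Y) = \bigoplus_{\z} H^d_{\z} \simeq \C^{\binom{q}{d}}$; then it observes $\delta_d(\one) = 0$ forces $H^d_{\one} = C^d_{\one} \simeq \C^{\binom{q}{d}}$, and a dimension count kills all the other summands at once. Your argument is purely algebraic and more local: after transport by $U_{\z}$, the twisted complex is identified with the Koszul complex of left multiplication by $g_{\z} = \sum_i (z_i - 1) e_i$ on $\Lambda^{\bullet} \C^q$ (this is exactly the $G_d$ of \lref{lem:gtwice} with $g(i) = z_i - 1$, and the sign check is as you say a matching of $\sgn(\eta,\sigma)$ with the Koszul reordering sign), and each $\z \ne \one$ is handled individually by the contracting homotopy $\iota_{\phi}$ with $\phi(g_{\z}) = 1$. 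What your approach buys is independence from the topology of the torus and a direct explanation of \emph{why} the $\z \ne \one$ summands vanish, namely $g_{\z} \ne 0$; what the paper's approach buys is brevity given that the identification $H^d(Y) \simeq \C^{\binom{q}{d}}$ is already available. Both are sound; yours is the more self-contained.
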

\begin{proof}
First, we note that $H^d(Y)=\oplus_{\z \in \Vhat} H^d_{\z}$
 is the $d$th cohomology of the $q$-dimensional torus and is
 given by $H^d(Y)=\C^{q\choose d}$. For $\z=\vec{1}$, since
 $\delta_d(\z)=0$,  we have 
$H^d_{\vec{1}}=C^d_{\vec{1}}\simeq \C^{q\choose d}$. Hence, $H^d_{\z}=0$ for $\z\neq \vec{1}$.
\end{proof}

\begin{cor}\label{cor:kerneldim}
If $\z \not= \one$, then 
$\dim \ker \delta_d(\z)={q-1\choose d-1}$ and 
$\dim \ker \delta_{d-1}^*(\z) = {q-1\choose d}$ 
for $d=0,1,\dots,q$. Here, ${q-1 \choose -1} = {q-1 \choose q} = 0$. 
\end{cor}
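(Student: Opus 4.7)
The plan is to exploit the exactness of the cochain complex $\bigl(C^\bullet_{\z}, \delta_\bullet(\z)\bigr)$ for $\z \neq \one$, which is the content of \lref{lem:trivial-cohom}, combined with repeated application of the rank-nullity theorem and Pascal's identity.

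First I would record the setup: for each $\z \in \hat V$ and each $d$, $\dim C^d_{\z} = \binom{q}{d}$ (the dimension of $C^d(K_{d-1})$), and the complex
\[
0 \to C^0_{\z} \xrightarrow{\delta_0(\z)} C^1_{\z} \xrightarrow{\delta_1(\z)} \cdots \xrightarrow{\delta_{q-1}(\z)} C^q_{\z} \to 0
\]
has vanishing cohomology when $\z \neq \one$, by \lref{lem:trivial-cohom}. Writing $k_d := \dim \ker \delta_d(\z)$ and $r_d := \dim \im \delta_d(\z)$, exactness gives $k_d = r_{d-1}$ for every $d \geq 0$, with the convention $r_{-1}=0$ (so $k_0 = 0$).

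Next I would set up an induction on $d$ to show $k_d = \binom{q-1}{d-1}$. The base case $d=0$ gives $k_0 = 0 = \binom{q-1}{-1}$. For the inductive step, rank-nullity applied to $\delta_{d-1}(\z)$ yields
\[
r_{d-1} = \dim C^{d-1}_{\z} - k_{d-1} = \binom{q}{d-1} - \binom{q-1}{d-2},
\]
and Pascal's identity $\binom{q}{d-1} = \binom{q-1}{d-1} + \binom{q-1}{d-2}$ immediately collapses this to $\binom{q-1}{d-1}$. Since $k_d = r_{d-1}$, the induction closes.

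For the dual statement, I would use that $\rank \delta_{d-1}^*(\z) = \rank \delta_{d-1}(\z) = r_{d-1}$, so
\[
\dim \ker \delta_{d-1}^*(\z) = \dim C^d_{\z} - r_{d-1} = \binom{q}{d} - \binom{q-1}{d-1} = \binom{q-1}{d},
\]
again by Pascal. The only subtlety to handle cleanly is the convention at the boundaries $d=0$ and $d=q$, where ${q-1 \choose -1}$ and ${q-1 \choose q}$ appear; both are $0$, matching the fact that $\ker \delta_0(\z) = H^0_{\z} = 0$ for $\z \neq \one$ and that $\delta_{q-1}^*(\z)$ is injective on $C^q_{\z}$ for the same reason (exactness at the top of the dual complex). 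There is no real obstacle here — the whole argument is a bookkeeping exercise once \lref{lem:trivial-cohom} delivers exactness — but care with the edge conventions is the one thing that could trip one up.
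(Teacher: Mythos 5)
Your proof is correct and follows essentially the same route as the paper: both use the vanishing of $H^d_{\z}$ for $\z\neq\one$ (\lref{lem:trivial-cohom}) to convert the cochain complex into an exact sequence, then apply rank--nullity to deduce the kernel dimensions, and both handle $\delta_{d-1}^*(\z)$ via the elementary fact that adjoints preserve rank (the paper phrases this as $\ker\delta_{d-1}^*(\z)=(\ker\delta_d(\z))^\perp$). The only difference is cosmetic — you solve the recursion $\binom{q}{d-1}=k_{d-1}+k_d$ by induction with Pascal's identity, while the paper takes the alternating sum — so this is just bookkeeping and both close the same way.
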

\begin{proof} From the rank-nullity theorem together with \lref{lem:trivial-cohom}, we have  
\[
 {q \choose d} = \dim \ker \delta_d(\z) + \dim \ker
 \delta_{d+1}(\z) \quad (d=0,1,\dots,q-1). 
\]
Since $\dim \ker \delta_0(\z)=0$, taking the alternating sum above yields 
$\dim \ker \delta_d(\z) = {q-1 \choose d-1}$. 
Since $ \ker \delta_{d-1}^*(\z) 
= (\im \delta_{d-1}(\z))^{\perp} = (\ker
 \delta_d(\z))^{\perp}$, from \lref{lem:trivial-cohom}, we have 
\[
C^d_{\z} = \ker \delta_{d}(\z) \oplus \ker \delta_{d-1}^*(\z).   
\]
Therefore, $\dim \ker \delta_{d-1}^*(\z) = {q-1\choose d}$.  
\end{proof}

Next, we give the matrix representations for the up/down Laplacians. 
 Let $\lup_d = \delta_d^* \delta_d$
  and $\ldown_d = \delta_{d-1} \delta_{d-1}^*$. 
These operators can be decomposed as 
$\lup_d = \oplus_{\z \in \Vhat} \lup_d(\z)$ and 
$\ldown_d = \oplus_{\z \in \Vhat} \ldown_d(\z)$. 
As in the case of \lref{lem:spec}, we have the following. 

 \begin{lem}\label{lem:partialef}
The $(\sigma,\sigma')$-element of the 
matrix representation of $\lup_d(\z)$ is given by 
\[
\ell^{\mathrm{up}}_{\sigma \sigma'}(\z) = 
\sum_{\eta \in K_d : \eta \supset \sigma \cup \sigma'}
\sgn(\eta, \sigma)\sgn(\eta, \sigma')
(z_{\eta \setminus \sigma}^{-1}-1)(z_{\eta \setminus \sigma'}-1) 
\]
and the $(\sigma, \sigma')$-element of the matrix representation
  of $\ldown_d(\z)$ 
is given by 
\[
\ell^{\mathrm{down}}_{\sigma \sigma'}(\z) = 
\sum_{\tau \in K_{d-2} : \tau \subset \sigma \cap \sigma'} 
\sgn(\sigma, \tau) \sgn(\sigma', \tau)
(z_{\sigma \setminus \tau}-1) 
(z_{\sigma' \setminus \tau}^{-1}-1).  
\]
 \end{lem}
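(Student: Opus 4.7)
The plan is to obtain both matrix representations by direct substitution, mirroring the argument of \lref{lem:spec} but with the operators $\delta_d(\z)$ and $\delta_d^*(\z)$ (whose explicit formulas are already available from the discussion preceding the lemma) in place of $M_d(\z)$ and $M_d^*(\z)$. Since $\lup_d = \delta_d^* \delta_d$ and $\ldown_d = \delta_{d-1}\delta_{d-1}^*$, and since both factors preserve each $C^d_\z$, it is enough to compose the twisted operators on a single fiber and read off the coefficient of $f(\sigma',\v)$ in $(\lup_d(\z) f)(\sigma,\v)$, and similarly for $\ldown_d(\z)$.

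For $\lup_d(\z)$, I would apply $\delta_d(\z)$ first, to get
\[
(\delta_d(\z) f)(\eta,\v) = \sum_{\sigma' \in K_{d-1} : \sigma' \subset \eta}
\sgn(\eta,\sigma')(z_{\eta \setminus \sigma'}-1) f(\sigma',\v),
\]
and then apply $\delta_d^*(\z)$. Interchanging the order of summation, the coefficient of $f(\sigma',\v)$ in $(\delta_d^*(\z) \delta_d(\z) f)(\sigma,\v)$ equals
\[
\sum_{\eta \in K_d : \eta \supset \sigma \cup \sigma'}
\sgn(\eta,\sigma)\sgn(\eta,\sigma')
(z_{\eta \setminus \sigma}^{-1}-1)(z_{\eta \setminus \sigma'}-1),
\]
which is precisely $\ell^{\mathrm{up}}_{\sigma\sigma'}(\z)$. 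The constraint $\eta \supset \sigma \cup \sigma'$ arises from combining the conditions $\eta \supset \sigma$ (from $\delta_d^*$) and $\sigma' \subset \eta$ (from $\delta_d$).

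The computation for $\ldown_d(\z) = \delta_{d-1}(\z) \delta_{d-1}^*(\z)$ is completely analogous: apply $\delta_{d-1}^*(\z)$ first to obtain a sum over $\tau \in K_{d-2}$ with $\tau \subset \sigma'$, then apply $\delta_{d-1}(\z)$, which introduces a sum over $\tau \subset \sigma$; combining the two gives the constraint $\tau \subset \sigma \cap \sigma'$ and produces the stated factors $\sgn(\sigma,\tau)\sgn(\sigma',\tau)(z_{\sigma \setminus \tau}-1)(z_{\sigma' \setminus \tau}^{-1}-1)$.

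There is no real obstacle here; the argument is pure bookkeeping. The only point requiring a little care is to make sure the signs and the directions of the exponents $z^{\pm 1}_{\cdot}$ come out correctly, which is dictated by whether the operator being applied is $\delta$ or $\delta^*$ (the latter contributes the sign $\sgn(\eta,\sigma)$ and the inverse character $z^{-1}$, consistent with the formulas given just before the lemma).
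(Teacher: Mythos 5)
Your proposal is correct and matches the paper's intended argument: the paper omits an explicit proof and simply says ``As in the case of Lemma~\ref{lem:spec},'' which is precisely the composition-and-interchange computation you carry out, substituting the twisted operators $\delta_d(\z)$, $\delta_d^*(\z)$ for $M_d(\z)$, $M_d^*(\z)$. The sign factors $\sgn(\cdot,\cdot)$, the exponents $z^{\pm 1}$, and the constraints $\eta \supset \sigma \cup \sigma'$ (resp.\ $\tau \subset \sigma \cap \sigma'$) all come out exactly as you state.
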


\begin{cor}\label{cor:upplusdown}
 Suppose $K$ is the complete simplicial complex over $\{1,2,\dots,q\}$.
 Then, we have 
 \[
\lup_d(\z) + \ldown_d(\z) = 
 \Big\{2q - \sum_{i=1}^q (z_i + z_i^{-1}) \Big\} I. 
\] 
\end{cor}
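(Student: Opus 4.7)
The plan is to compute directly the $(\sigma, \sigma')$-entry of $\lup_d(\z) + \ldown_d(\z)$ using the formulas from \lref{lem:partialef}, splitting into cases by the distance $\rho(\sigma, \sigma') = |\sigma \setminus \sigma'|$. The completeness of $K$ will be used repeatedly to guarantee that every candidate $\eta \in K_d$ or $\tau \in K_{d-2}$ actually lies in the complex.

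In the diagonal case $\sigma = \sigma'$, the sign factors $\sgn(\eta, \sigma)^2$ and $\sgn(\sigma, \tau)^2$ are both $1$. Parametrizing the admissible $\eta \supset \sigma$ by $j \in K_0 \setminus \sigma$ via $\eta = \sigma \cup \{j\}$, and the admissible $\tau \subset \sigma$ by $j \in \sigma$ via $\tau = \sigma \setminus \{j\}$, the up-contribution evaluates to $\sum_{j \notin \sigma}(z_j^{-1}-1)(z_j-1) = \sum_{j \notin \sigma}(2 - z_j - z_j^{-1})$ and the down-contribution to $\sum_{j \in \sigma}(2 - z_j - z_j^{-1})$. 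Their sum is $\sum_{j=1}^q (2 - z_j - z_j^{-1}) = 2q - \sum_{j=1}^q (z_j + z_j^{-1})$, which is the scalar asserted in the corollary and is independent of $\sigma$.

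For the off-diagonal case $\sigma \neq \sigma'$, first observe that if $\rho(\sigma, \sigma') \geq 2$, then $|\sigma \cup \sigma'| \geq d+2$ and $|\sigma \cap \sigma'| \leq d-2$, so neither an $\eta \in K_d$ containing $\sigma \cup \sigma'$ nor a $\tau \in K_{d-2}$ contained in $\sigma \cap \sigma'$ exists, and both matrix entries vanish. The remaining case $\rho(\sigma, \sigma') = 1$ is where cancellation is nontrivial: write $\sigma = \tau \cup \{i\}$, $\sigma' = \tau \cup \{j\}$ with $\tau = \sigma \cap \sigma'$ and $i \neq j$, so $\eta := \sigma \cup \sigma' = \tau \cup \{i,j\}$ is the unique element of $K_d$ above $\sigma \cup \sigma'$, and $\tau$ is the unique element of $K_{d-2}$ below $\sigma \cap \sigma'$. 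Both the up- and down-contributions then consist of a single term sharing the common factor $(z_i - 1)(z_j^{-1} - 1)$, multiplied by $\sgn(\eta, \sigma)\sgn(\eta, \sigma')$ and $\sgn(\sigma, \tau)\sgn(\sigma', \tau)$, respectively.

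The heart of the argument is therefore the combinatorial identity $\sgn(\eta, \sigma)\sgn(\sigma, \tau) + \sgn(\eta, \sigma')\sgn(\sigma', \tau) = 0$ for the two chains from $\tau$ to $\eta$; this is precisely the identity underlying $\delta^2 = 0$ that was exploited in the proof of \lref{lem:gtwice}. Multiplying the two sides gives $\sgn(\eta, \sigma)\sgn(\eta, \sigma') = -\sgn(\sigma, \tau)\sgn(\sigma', \tau)$, which makes the two off-diagonal contributions cancel exactly. I expect this sign identity to be the only delicate step; all other ingredients reduce to direct substitution and bookkeeping.
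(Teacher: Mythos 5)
Your proof is correct and follows essentially the same route as the paper: compute the entries of $\lup_d(\z)+\ldown_d(\z)$ case-by-case in $\rho(\sigma,\sigma')$ using \lref{lem:partialef}, with the diagonal giving the scalar $\sum_{j=1}^q(2-z_j-z_j^{-1})$ and the $\rho=1$ off-diagonal entries cancelling via the sign identity $\sgn(\eta,\sigma)\sgn(\eta,\sigma')+\sgn(\sigma,\tau)\sgn(\sigma',\tau)=0$ from the $\delta^2=0$ argument of \lref{lem:gtwice}. The only stylistic difference is that you unpack the sign identity and the common factor $(z_i-1)(z_j^{-1}-1)$ more explicitly than the paper does.
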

\begin{proof}
 In \lref{lem:partialef}, we chose $\sigma, \sigma' \in K_{d-1}$. 
If $\sigma \not= \sigma'$, then $\ell^{\mathrm{up}}_{\sigma \sigma'}(\z) 
= \ell^{\mathrm{down}}_{\sigma \sigma'}(\z) = 0$ 
unless $\rho(\sigma, \sigma') = 1$. 
When $\rho(\sigma, \sigma') = 1$, we see that 
$\tau = \sigma \cap \sigma'$, $\eta = \sigma \cup \sigma'$, 
$\sigma \setminus \tau = \eta \setminus \sigma'$ and 
$\sigma' \setminus \tau = \eta \setminus \sigma$. 
Using the same argument as in the proof of
 \lref{lem:gtwice}, we obtain 
\[
  \sgn(\sigma \cup \sigma', \sigma) \sgn(\sigma \cup \sigma', \sigma')
+ \sgn(\sigma, \sigma \cap \sigma') \sgn(\sigma', \sigma \cap \sigma') 
=0, 
\]
and hence 
$\ell^{\mathrm{up}}_{\sigma \sigma'}(\z) +  \ell^{\mathrm{down}}_{\sigma \sigma'}(\z) = 0$. 
If $\sigma = \sigma'$, then we have 
\begin{align*}
\lefteqn{\ell^{\mathrm{up}}_{\sigma\sigma}(\z) +  \ell^{\mathrm{down}}_{\sigma \sigma}(\z)} \\ 
 &= \sum_{\eta \in K_d : \eta \supset \sigma} 
 (z_{\eta \setminus \sigma}^{-1}-1)(z_{\eta \setminus \sigma}-1) 
+ \sum_{\tau \in K_{d-2} : \tau \subset \sigma} 
(z_{\sigma \setminus \tau}-1) (z_{\sigma \setminus \tau}^{-1}-1) \\
&= \sum_{i=1}^q (z_i-1)(z_i^{-1}-1). 
\end{align*}
This completes the proof. 
\end{proof}

\begin{lem}\label{lem:ev_Ldz}
 The eigenvalues of $\lup_d(\z)$ (resp. $\ldown_d(\z)$) on
 the periodic $q$-cubical lattice $Y$ are  
$2q - \sum_{i=1}^q (z_i + z_i^{-1})$ with multiplicity ${q-1 \choose
 d}$ (resp. ${q-1 \choose d-1}$)
 and $0$ with multiplicity ${q-1 \choose d-1}$ (resp. ${q-1 \choose d}$).  
\end{lem}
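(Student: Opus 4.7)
My plan is to exploit the identity from \cref{cor:upplusdown}, namely
\[
\lup_d(\z) + \ldown_d(\z) = \lambda(\z) I, \qquad \lambda(\z) := 2q - \sum_{i=1}^q (z_i + z_i^{-1}),
\]
together with the kernel dimension computation from \cref{cor:kerneldim}. The first observation is that $\lup_d(\z)$ and $\ldown_d(\z)$ are both positive semi-definite (as $\delta_d^*(\z)\delta_d(\z)$ and $\delta_{d-1}(\z)\delta_{d-1}^*(\z)$), and their sum is a nonnegative scalar multiple of the identity. In particular they commute and each eigenspace of $\lup_d(\z)$ with eigenvalue $\mu$ is simultaneously an eigenspace of $\ldown_d(\z)$ with eigenvalue $\lambda(\z)-\mu$.

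Next I would treat the two cases for $\z$. If $\z = \one$, then from the explicit formulas for $\delta_d(\z)$ and $\delta_{d-1}^*(\z)$ all the factors $z_{\eta\setminus\sigma}-1$ and $z_{\eta\setminus\sigma}^{-1}-1$ vanish, so $\lup_d(\one) = \ldown_d(\one) = 0$, which agrees with $\lambda(\one) = 0$ and exhausts the $\binom{q}{d}$-dimensional space. If $\z \neq \one$, then $\lambda(\z) > 0$, and I use that $\ker \lup_d(\z) = \ker \delta_d(\z)$ and $\ker \ldown_d(\z) = \ker \delta_{d-1}^*(\z)$ (the standard identity $\ker T^*T = \ker T$ for operators between finite-dimensional inner product spaces). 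By \cref{cor:kerneldim} these kernels have dimensions $\binom{q-1}{d-1}$ and $\binom{q-1}{d}$ respectively, and the proof of that corollary also gives the orthogonal decomposition
\[
C_{\z}^d = \ker \delta_d(\z) \oplus \ker \delta_{d-1}^*(\z).
\]

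Finally, on $\ker \lup_d(\z)$ the operator $\lup_d(\z)$ vanishes, and by the scalar-sum identity $\ldown_d(\z)$ acts as $\lambda(\z) I$; symmetrically, on $\ker \ldown_d(\z)$ the operator $\ldown_d(\z)$ vanishes and $\lup_d(\z)$ acts as $\lambda(\z) I$. This yields the claimed multiplicities: $\lup_d(\z)$ has eigenvalue $0$ with multiplicity $\binom{q-1}{d-1}$ and $\lambda(\z)$ with multiplicity $\binom{q-1}{d}$, and analogously for $\ldown_d(\z)$ with the two multiplicities swapped. Since $\binom{q-1}{d-1}+\binom{q-1}{d} = \binom{q}{d} = \dim C_{\z}^d$, the full spectrum is accounted for.

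The argument is essentially formal once the pieces are assembled; the only potential subtlety is verifying cleanly that the decomposition $C_{\z}^d = \ker\delta_d(\z) \oplus \ker\delta_{d-1}^*(\z)$ really reduces the two Laplacians diagonally, but this is immediate from $\lup_d(\z)+\ldown_d(\z)=\lambda(\z)I$ together with $\ker \lup_d(\z) = \ker \delta_d(\z)$ and $\ker \ldown_d(\z) = \ker \delta_{d-1}^*(\z)$, so no genuine obstacle remains.
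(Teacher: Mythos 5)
Your proof is correct and follows essentially the same route as the paper's: combine Corollary~\ref{cor:upplusdown} (the Laplacians sum to $\lambda(\z)I$) with the kernel-dimension count from Corollary~\ref{cor:kerneldim} via $\ker T^*T = \ker T$. Your explicit separation of the case $\z = \one$ and the remark on positive semi-definiteness are modest refinements of the paper's argument (which cites Corollary~\ref{cor:kerneldim} without flagging that it is stated for $\z\neq\one$, though the $\z=\one$ case is trivially covered since both Laplacians vanish there), but the underlying idea is identical.
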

 \begin{proof}
By \cref{cor:upplusdown}, for any $f \in \ker \ldown_d(\z)$, we have 
\[
  \lup_d(\z) f = 
\big\{2q - \sum_{i=1}^q (z_i + z_i^{-1}) \big\} f. 
\]
Also, for any non-zero $f \in \ker \lup_d(\z)$, by definition, $\lup_d(\z) f = 0$.
  Since $\dim \ker \ldown_d(\z) = \dim \ker \delta_{d-1}^*(\z) = {q-1 \choose d}$ 
and $\dim \ker \lup_d(\z) = \dim \kernel \delta_d(\z) = {q-1 \choose d-1}$ 
by \cref{cor:kerneldim}, the space $C^d_{\z}(Y)$ is spanned by those eigenfunctions. 
 \end{proof}

\begin{cor}\label{cor:ev_Ld}
The eigenvalues of $\lup_d$ are given by 
\begin{equation}
 \spec(\lup_d) 
= \left\{ 2q - 2\sum_{i=1}^q \cos \frac{2 \pi k_i}{n_i} : k_i \in \Z_{n_i}\right \} \cup \{0\}. 
\label{eq:specoflupd} 
\end{equation}
The multiplicity of the eigenvalue $2q - 2\sum_{i=1}^q \cos \frac{2
 \pi k_i}{n_i}$ for each $\vec{k} =(k_1,\dots, k_q)$
 is ${q-1 \choose d}$, and that of $0$ is ${q-1 \choose
 d-1} |\vec{n}|$. 
\end{cor}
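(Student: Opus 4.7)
The plan is to read off $\spec(\lup_d)$ from \lref{lem:ev_Ldz} via the direct sum decomposition $\lup_d = \bigoplus_{\z \in \hat{V}} \lup_d(\z)$ established earlier in this subsection, which implies that $\spec(\lup_d)$ is the multiset union of the spectra of the twisted operators $\lup_d(\z)$.

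I would first parameterize $\z \in \hat V = \hat\Z_{n_1} \times \cdots \times \hat\Z_{n_q}$ by $\vec k = (k_1,\dots,k_q) \in \Z_{n_1} \times \cdots \times \Z_{n_q}$ via $z_i = \exp(2\pi \sqrt{-1}\, k_i/n_i)$, so that $z_i + z_i^{-1} = 2\cos(2\pi k_i/n_i)$ and the nonzero eigenvalue supplied by \lref{lem:ev_Ldz} takes the form $\lambda_{\vec k} := 2q - 2\sum_{i=1}^q \cos(2\pi k_i/n_i)$. By \lref{lem:ev_Ldz}, each twisted summand $\lup_d(\z(\vec k))$ contributes $\lambda_{\vec k}$ with multiplicity $\binom{q-1}{d}$ and $0$ with multiplicity $\binom{q-1}{d-1}$. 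Assembling these over the $|\vec n|$ choices of $\vec k$ gives the eigenvalue set \eqref{eq:specoflupd}, the multiplicity $\binom{q-1}{d}$ for each $\lambda_{\vec k}$, and total multiplicity $|\vec n|\binom{q-1}{d-1}$ for the separately listed eigenvalue $0$. A dimension check $|\vec n|\binom{q-1}{d} + |\vec n|\binom{q-1}{d-1} = |\vec n|\binom{q}{d} = \dim C^d(Y)$ confirms that nothing has been missed.

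The argument is essentially book-keeping once \lref{lem:ev_Ldz} is in hand, so there is no serious obstacle. The only point worth flagging is the degenerate case $\vec k = \vec 0$ (equivalently $\z = \one$): there every factor $z_{\eta\setminus\sigma}-1$ vanishes, so $\delta_d(\one) = 0$ and hence $\lup_d(\one) = 0$; the entire ${q \choose d}$-dimensional fiber $C^d_{\one}$ thus lies in the kernel, consistent with \lref{lem:ev_Ldz} since $\lambda_{\vec 0}=0$ and the two multiplicities $\binom{q-1}{d} + \binom{q-1}{d-1} = \binom{q}{d}$ both accumulate at the eigenvalue $0$. This is precisely why $0$ appears in the statement both as the $\vec k = \vec 0$ instance of the parameterized family and as a separate eigenvalue of multiplicity $|\vec n|\binom{q-1}{d-1}$.
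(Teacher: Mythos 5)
Your proposal is correct and follows the same route the paper takes implicitly: Corollary~\ref{cor:ev_Ld} is simply the assembly of Lemma~\ref{lem:ev_Ldz} over the direct-sum decomposition $\lup_d = \bigoplus_{\z\in\Vhat}\lup_d(\z)$, with $z_i+z_i^{-1}=2\cos(2\pi k_i/n_i)$. Your dimension check and your treatment of the degenerate fiber $\z=\one$ (where $\delta_d(\one)=0$ so both multiplicity buckets collapse to the eigenvalue $0$, outside the scope of Corollary~\ref{cor:kerneldim} which needs $\z\neq\one$) match the remark the paper places immediately after the corollary.
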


Note that the eigenvalue for $\vec{k} = (0,0,\dots,0)$ with multiplicity 
${q-1 \choose d}$ is also $0$, 
but this eigenvalue is represented within the first set of
the right-hand side of \eqref{eq:specoflupd}, not the second set.

\section{Zeta functions of periodic cubical complexes}\label{sec:zeta}

The zeta functions of cubical complexes can be reformulated as those of
hypergraphs (see Definition~\ref{def:zetafn2}).  

A hypergraph is a pair $H=(V,E)$ of disjoint sets, where 
$V$ is a non-empty set and the elements of $E$ are non-empty subsets of $V$. 
An element of $V$ (resp. $E$) is called a hypervertex (resp. hyperedge). 
A hypervertex $v \in V$ is said to be
incident to $e \in E$ if $v$ is included in $e$. 
The $|V|\times |E|$-matrix $M$ indexed by the elements of
$V$ and $E$ is defined as 
$M_{v,e} = 1$ if $v$ is incident to $e$; 0 otherwise. This is called
the incidence matrix for $H$. The degree, $\deg(v)$, of a hypervertex $v$ is the number of hyperedges that include $v$, and 
the degree, $\deg(e)$, of a hyperedge $e$
is the number of hypervertices that are included in $e$. 
A hypergraph $H = (V,E)$ is said to be $(a,b)$-regular if $\deg(v) = a$ for all $v \in V$ 
and $\deg(e) = b$ for all $e \in E$.   

\begin{ex}\label{ex:hypergraph}
(1) When $\deg(e)=2$ for every $e \in E$, then a hypergraph is nothing but a graph. \\
(2) A simplicial complex over a set $V$ can be viewed as a hypergraph by regarding 
all simplices as $E$.  \\
(3) For a $q$-dimensional cubical complex, let $V$ be the set of $(d-1)$-cubes and 
 $E$ the set of $d$-cubes. Then $H=(V,E)$ forms a hypergraph.
 In particular, $H = (V,E)$ is a $(2(q-d+1), 2d)$-regular hypergraph 
 if $V = Y_{d-1}$ and $E = Y_d$ for
 the periodic $q$-cubical lattice $Y$. 
\end{ex}

A closed path in $H$ is a sequence such that 
$c = (v_0, e_0, v_1, e_1, \dots, v_{n-1}$, $e_{n-1})$, where $v_{i+1} \in e_{i} \cap e_{i+1}$ 
and $v_i \not= v_{i+1}$ for all $i \in \Z_n$. Note that $v_0 \in e_{n-1} \cap e_0$. 
The length of $c$ is the number of hyperedges in $c$, denoted by $|c|$. 
We say that a closed path $c$ is a closed geodesic if $e_i \not= e_{i+1}$ for all $i \in \Z_{|c|}$.  
We denote by $c^m$ the $m$-multiple of a closed geodesic $c$
formed by $m$ repetetions of $c$. 
If a closed geodesic $c$ is not expressed as 
an $m$-multiple of a closed geodesic with $m \ge 2$, then $c$ is said to be prime. 
Two prime closed geodesics are said to be equivalent if one
is obtained from the other through a cyclic permutation. 
An equivalence class of prime closed geodesics is called a prime cycle. 
The length of a prime cycle $\primitive$ is defined as
the length of a representative and is denoted by $|\primitive|$. 
The (Ihara) zeta function of a finite hypergraph is defined as follows. 

\begin{defn}\label{def:zetafn2}
 For $u \in \C$ with $|u|$ sufficiently small, the (Ihara) zeta function of a finite hypergraph 
$H$ is defined by 
\[
 \zeta_H(u) = \prod_{\primitive \in P} (1 - u^{|\primitive|})^{-1}, 
\]
where $P$ is the set of prime cycles of $H$. 
\end{defn}

The factorization theorem for zeta functions of finite graphs was obtained by 
H.~Bass \cite{bass} and a conceptually simpler proof employing oriented linegraph structure
was given by Kotani-Sunada \cite{kotani-sunada}. 

One can associate with a hypergraph $H=(V,E)$ a bipartite graph $B_H$ whose vertex partite sets are 
$V$ and $E$, and the incidence relation gives edges in $B_H$, i.e., 
$V(B_H) = V \sqcup E$ and every edge in $E(B_H)$ connects a
vertex in $V$ to one in $E$;  
$v \in V$ and $e \in E$ are joined when $v \in e$. 
The definition of prime cycles given above fits for the cycle structure of
$B_H$ when $\deg(v) \ge 2$ for all $v \in V$ as discussed in
\cite{storm}, and hence the theorem for graphs can be
extended to the hypergraph setting as follows: 

\begin{thm}[\cite{storm}]\label{thm:storm}
Let $H = (V,E)$ be a finite, connected hypergraph such that
 $\deg(v) \ge 2$ for all $v \in V$ with adjacency matrix $A$
 and diagonal degree matrix $D$ in $B_H$. 
Then, 
\[
 \zeta_{H}(u)
= (1-u)^{\chi(B_H)} \det(I - \sqrt{u} A + u Q)^{-1}, 
\]
 where $I$ is the $m \times m$ identity matrix with $m = |V|+|E|$,
 $Q = D-I$, 
$B_H$ is the bipartite graph associated with $H$, and 
$\chi(B_H) = |V|-|E|$ is the Euler characteristic of $B_H$. 
\end{thm}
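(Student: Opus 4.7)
The plan is to reduce the theorem to the classical Bass--Ihara three-term determinantal identity applied to the bipartite incidence graph $B_H$, via the substitution $t \mapsto \sqrt{u}$.

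First I would establish a length-doubling bijection between prime cycles in $H$ and prime cycles in $B_H$. A closed geodesic $(v_0, e_0, v_1, e_1, \ldots, v_{n-1}, e_{n-1})$ in $H$ lifts to the closed walk $v_0, e_0, v_1, \ldots, v_{n-1}, e_{n-1}, v_0$ of length $2n$ in $B_H$. The two non-backtracking conditions in $B_H$, at a $V$-vertex and at an $E$-vertex, translate exactly into $e_i \neq e_{i+1}$ and $v_i \neq v_{i+1}$, which together constitute the definition of a closed geodesic in $H$. Conversely, since $B_H$ is bipartite, every closed non-backtracking walk has even length and alternates between $V$ and $E$, so it projects to a closed geodesic in $H$. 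The hypothesis $\deg(v) \geq 2$ rules out pathological hypervertices at which backtracking in $B_H$ would be forced. Checking that cyclic rotation and primality are preserved under the lift, one obtains
\[
 \zeta_H(u) = \zeta_{B_H}(\sqrt{u}).
\]

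Next I would apply Bass's theorem to the ordinary graph $B_H$: writing $A, D$ for its adjacency and degree matrices on $m = |V| + |E|$ vertices and $Q = D - I$,
\[
 \zeta_{B_H}(t)^{-1} = (1 - t^2)^{-\chi(B_H)} \det(I - tA + t^2 Q).
\]
Substituting $t = \sqrt{u}$ and combining with the previous step yields the claimed identity
\[
 \zeta_H(u)^{-1} = (1-u)^{-\chi(B_H)} \det(I - \sqrt{u}\,A + uQ).
\]

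The main obstacle is the prime-cycle bijection, specifically verifying that a prime cycle in $H$ does not pull back to a proper power in $B_H$ and vice versa. This relies on the bipartite alternation: any $k$-fold power of a shorter closed non-backtracking walk in $B_H$ would itself alternate between $V$ and $E$, hence have even length and project to a closed geodesic in $H$, forcing the original $H$-cycle to be imprimitive. The assumption $\deg(v) \geq 2$ is essential here to guarantee that every cycle in $H$ admits a genuine non-backtracking lift to $B_H$ without degenerate behaviour at hypervertices with only one incident hyperedge.
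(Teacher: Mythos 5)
The paper does not prove this theorem; it states it as a citation of Storm~\cite{storm}, and only sketches the surrounding idea in prose. Your proposal is correct and in fact matches that sketch: the paper explicitly points out that closed geodesics in $H$ correspond to non-backtracking closed walks in $B_H$ of doubled length (this is the very reason it gives for the $\sqrt{u}$ being harmless), and that ``the theorem for graphs can be extended to the hypergraph setting.'' Your reduction $\zeta_H(u)=\zeta_{B_H}(\sqrt{u})$ followed by the Ihara--Bass formula on the bipartite graph $B_H$ is exactly this. Your verification of primality -- bipartiteness forces any shorter period of the lifted walk to have even length, hence to project to a shorter period of the $H$-geodesic -- is the right check, and is precisely what ``the definition of prime cycles given above fits for the cycle structure of $B_H$'' is asserting. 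The one place where your route and Storm's differ is purely internal to the final step: you invoke Bass's determinant identity for $B_H$ as a black box, whereas Storm, as the paper notes, reproves that identity by building the colored oriented linegraph in the Kotani--Sunada style. These are two standard proofs of the same graph-level theorem, so the difference is cosmetic. One small caveat worth keeping in mind (not a gap in your argument, since you never use it): the paper's parenthetical ``$\chi(B_H)=|V|-|E|$'' should be read as the Euler characteristic of the graph $B_H$ itself (its vertex count minus its edge count), which is what your application of Bass requires; the later regular-hypergraph Theorem in the paper makes this unambiguous.
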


In the above theorem, although $\sqrt{u}$ appears in the expression,  
the zeta function is a rational function of $u$,  
because the length of the corresponding cycle in $B_H$ of a cycle in $H$ is doubled. 
The colored, oriented linegraph is constructed from $H$ in the proof given in \cite{storm}, 
which is based on an idea presented in \cite{kotani-sunada}.

The following theorem can also be regarded as a restatement 
of Hashimoto's theorem on zeta functions of semi-regular graphs because
the bipartite graph $B_H$ is $(a,b)$-semi-regular when $H$ is
$(a,b)$-regular. The number of vertices in $B_H$ is $|V|+|E|$ and
that of edges in $B_H$ is $a|V| = b|E|$. 
We can define the dual hypergraph $H^*$ of $H=(V,E)$
by interchanging the roles of
partite sets in $B_H$, i.e., $H^* = (E, E_V)$ with the incidence relation
determined by $B_H$, where $E_V := \{A_v \subset E: v \in V\}$ with 
$A_v := \{e \in E : e \ni v \}$ for $v \in V$. 
Clearly, $H^*$ is $(b,a)$-regular if $H$ is
$(a,b)$-regular. 
Also, it is clear that $\zeta_H(u) = \zeta_{H^*}(u)$ because
the cycle structures in $H$ and $H^*$ are identical 
since $B_H = B_{H^*}$ as graphs. 

\begin{thm}[\cite{storm}]\label{thm:storm2}
Let $H = (V,E)$ be a finite connected $(a,b)$-regular hypergraph with
 $a, b \ge 2$, and  
let $M$ be the incidence matrix between $V$ and $E$. 
Let $\alpha = a-1$ and $\beta  = b-1$. Then, 
\begin{align*}
\zeta_{H}(u)^{-1}
&= (1-u)^{-\chi(B_H)} (1+\beta u)^{|E| - |V|} \det\big( (1+\alpha u)(1+\beta u)I_{V} - u MM^* \big) \\
&= (1-u)^{-\chi(B_H)} (1+\alpha u)^{|V| - |E|} \det\big(
 (1+\alpha u)(1+\beta u)I_{E} - u M^*M \big),
\end{align*} 
where $\chi(B_H) = |E| - \alpha|V| = |V| - \beta |E|$, and 
$I_V$ (resp. $I_E$) is the $|V|\times |V|$ (resp. $|E| \times
 |E|$) identity matrix. 
\end{thm}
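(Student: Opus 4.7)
The plan is to derive \tref{thm:storm2} as a direct specialization of \tref{thm:storm} by exploiting the bipartite block structure of the adjacency and degree operators on $B_H$. Starting from
\[
 \zeta_{H}(u)^{-1} = (1-u)^{-\chi(B_H)} \det\bigl(I - \sqrt{u}\, A + u\, Q\bigr)
\]
provided by \tref{thm:storm}, I would rewrite the determinant on the right using the $(|V|+|E|)\times(|V|+|E|)$ block decomposition induced by the bipartition of $B_H$.

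Since $B_H$ is bipartite with parts $V$ and $E$ and the incidences are encoded by the $|V|\times|E|$ matrix $M$, the adjacency matrix has the off-diagonal block form
\[
 A = \begin{pmatrix} 0 & M \\ M^* & 0 \end{pmatrix}.
\]
The $(a,b)$-regularity hypothesis forces the degree matrix of $B_H$ to be $D = \diag(a I_V, b I_E)$, whence $Q = D - I = \diag(\alpha I_V, \beta I_E)$. Substituting these in gives
\[
 I - \sqrt{u}\,A + u\,Q = \begin{pmatrix} (1+\alpha u)\, I_V & -\sqrt{u}\, M \\ -\sqrt{u}\, M^* & (1+\beta u)\, I_E \end{pmatrix}.
\]

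The next step is to apply the Schur complement determinant formula in each of the two possible orderings. Eliminating the upper-left block first yields $(1+\alpha u)^{|V|}\det\bigl((1+\beta u)I_E - \tfrac{u}{1+\alpha u}\,M^*M\bigr)$, and pulling the scalar $(1+\alpha u)^{-1}$ out of each of the $|E|$ columns of the inner determinant produces the extra factor $(1+\alpha u)^{-|E|}$ that gives the second expression stated in the theorem; eliminating the lower-right block first produces the first expression by the symmetric calculation. The identity $\chi(B_H) = |E|-\alpha|V| = |V|-\beta|E|$ is then immediate from $|V(B_H)| = |V|+|E|$ and $|E(B_H)| = a|V| = b|E|$, the second equality being itself a consequence of the $(a,b)$-regularity.

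The only care required is bookkeeping: one must track the $\sqrt{u}$ factors so that the final determinants are polynomial (and not merely power-series) in $u$, and one must check that the two Schur reductions produce consistent expressions, as they must, since both compute the same determinant. Beyond this, there is no substantive analytic obstacle; once the bipartite block form is in place, the identity reduces to a standard linear algebra manipulation.
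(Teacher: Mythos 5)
Your proposal is correct and takes essentially the same route as the paper: both invoke Theorem~\ref{thm:storm}, write $A$ and $Q$ in bipartite block form with $Q=\diag(\alpha I_V,\beta I_E)$, and apply the Schur-complement determinant identity in the two possible orders to extract the two stated expressions. The bookkeeping you describe (pulling $(1+\alpha u)^{-1}$ out of each of the $|E|$ columns, verifying $\chi(B_H)=|E|-\alpha|V|=|V|-\beta|E|$ from the semi-regularity of $B_H$) matches the paper's calculation exactly.
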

\begin{proof}
In \tref{thm:storm}, the adjacency matrix $A$ and the diagonal matrix
 $Q$ are expressed by 
$A = \begin{pmatrix} 
      O & M \\
     M^* & O
\end{pmatrix}$ and  
$Q = \begin{pmatrix} 
      \alpha I_V & O \\
     O & \beta  I_E
\end{pmatrix}$. 
Using the determinantal identity 
\begin{equation}
 \det  \begin{pmatrix} 
      P_{11} & P_{12} \\
     P_{21} & P_{22}
\end{pmatrix}
= \det P_{11} \cdot \det(P_{22} - P_{21}P_{11}^{-1}P_{12}), 
\label{eq:det}
\end{equation}
we obtain 
\begin{align*}
\lefteqn{\det(I - \sqrt{u} A + uQ)}\\
&= 
(1+\alpha u)^{|V|} \cdot (1+\alpha u)^{-|E|}
\det \Big( (1+\alpha u)(1+\beta u)I_E - u M^* M \Big). 
\end{align*}
The second equality is obtained in the same way by changing the roles of $1$ and $2$ in 
\eqref{eq:det}. 
\end{proof}

\tref{thm:storm2} together with \cref{cor:aqdown} yields 
the following theorem. 

\begin{thm}\label{thm:zetaforcubical}
Let $Y$ be a $d$-dimensional cubical lattice with side length $\vec{n} =
 (n_1,n_2,\dots,n_q)$ and $Y^{(d)}$ its $d$-skeleton. 
Let $\aup_{d-1}(\z)$ (resp. $\adown_{d}(\z)$) on
 $C_{\z}^{d-1}$ (resp. $C_{\z}^d$) be the twisted adjacency operator.  
 Then, 
 \begin{align*}
\zeta_{Y^{(d)}}(u)^{-1} 
 &= (1-u)^{\kappa_d |\vec{n}| } (1+\beta_du)^{\gamma_d|\vec{n}|} \\
 &\quad \times \prod_{\z \in \Vhat}
 \det\Big( (1+\alpha_du)(1+\beta_du)I_{K_{d-2}} - u \aup_{d-1}(\z)\Big) \\ 
  &= (1-u)^{\kappa_d |\vec{n}|}
  (1+\alpha_du)^{-\gamma_d|\vec{n}|} \\
 &\quad \prod_{\z \in \Vhat} \det\Big( (1+\alpha_du)(1+\beta_du)I_{K_{d-1}}
 - u \adown_d(\z) \Big), 
 \end{align*}
 where
 $\alpha_d = 2q - 2d + 1$, $\beta_d = 2d-1$, $\gamma_d = {q \choose d} - {q \choose
 d-1}$ and 
 $\kappa_d = (q-d){q \choose d-1} +(d-1) {q \choose d}$.  
\end{thm}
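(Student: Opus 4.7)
The proof is a direct application of \tref{thm:storm2} to the hypergraph $H = (V, E) := (Y_{d-1}, Y_d)$ associated with the $d$-skeleton $Y^{(d)}$, followed by the $\hat{V}$-decomposition of the adjacency operators developed in \sref{sec:eigenvalues}.

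First I would invoke \eref{ex:hypergraph}(3) to recall that $H$ is $(a,b)$-regular with $a = 2(q-d+1)$ and $b = 2d$, so that $\alpha = a - 1 = \alpha_d$ and $\beta = b - 1 = \beta_d$, while $|V| = \binom{q}{d-1}|\vec{n}|$ and $|E| = \binom{q}{d}|\vec{n}|$. Since $n_i \ge 2$ for all $i$, the bipartite graph $B_H$ is connected, so the hypotheses of \tref{thm:storm2} are met.

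Second I would match the three exponents appearing in the statement. The exponents $|E| - |V| = \gamma_d|\vec{n}|$ and $|V| - |E| = -\gamma_d|\vec{n}|$ on $(1 + \beta_d u)$ and $(1 + \alpha_d u)$ respectively are immediate from the definition of $\gamma_d$. For the $(1-u)$ exponent, \tref{thm:storm2} gives $-\chi(B_H) = \alpha_d|V| - |E|$, and a short calculation using the identity $(q-d+1)\binom{q}{d-1} = d\binom{q}{d}$ (equivalently $a|V| = b|E|$) shows that $\alpha_d\binom{q}{d-1} - \binom{q}{d} = (q-d)\binom{q}{d-1} + (d-1)\binom{q}{d} = \kappa_d$, so this exponent equals $\kappa_d|\vec{n}|$ as required.

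Third, I would identify the operators entering \tref{thm:storm2}. Under the natural isomorphisms $C^{d-1}(Y) \cong \C^V$ and $C^d(Y) \cong \C^E$, the hypergraph incidence matrix $M$ corresponds (up to the $\vec{z}$-phases that only appear after twisting) to the matrix of $M_{d-1}^*$, so that $MM^*$ becomes $\aup_{d-1} = M_{d-1}^* M_{d-1}$ on $C^{d-1}(Y)$ and $M^*M$ becomes $\adown_d = M_{d-1} M_{d-1}^*$ on $C^d(Y)$. Substituting these into the two formulas of \tref{thm:storm2} yields $\zeta_{Y^{(d)}}(u)^{-1}$ as the product of the two boundary factors with either $\det\bigl((1+\alpha_d u)(1+\beta_d u)I - u\aup_{d-1}\bigr)$ or $\det\bigl((1+\alpha_d u)(1+\beta_d u)I - u\adown_d\bigr)$.

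Finally, the direct-sum decompositions $\aup_{d-1} = \bigoplus_{\vec{z}\in\hat{V}} \aup_{d-1}(\vec{z})$ and $\adown_d = \bigoplus_{\vec{z}\in\hat{V}} \adown_d(\vec{z})$ recorded in \sref{sec:eigenvalues} factor each determinant into a product over $\vec{z} \in \hat{V}$, with $\aup_{d-1}(\vec{z})$ acting on $C^{d-1}_{\vec{z}}$ (basis indexed by $K_{d-2}$) and $\adown_d(\vec{z})$ acting on $C^d_{\vec{z}}$ (basis indexed by $K_{d-1}$); the identity blocks become $I_{K_{d-2}}$ and $I_{K_{d-1}}$ respectively, giving precisely the two stated formulas. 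The main obstacle, such as it is, is the bookkeeping needed to align the cochain-space identifications of \sref{sec:eigenvalues} with the hypergraph vertex/edge spaces of \sref{sec:zeta} and to verify the combinatorial identity $-\chi(B_H) = \kappa_d|\vec{n}|$; all substantive content then comes from \tref{thm:storm2} and the $\hat{V}$-decomposition.
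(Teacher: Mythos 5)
Your proof is correct and follows essentially the same route as the paper's: apply Theorem~\ref{thm:storm2} to the $(2(q-d+1),2d)$-regular hypergraph $(Y_{d-1},Y_d)$, verify the exponent bookkeeping $-\chi(B_H)=\kappa_d|\vec{n}|$ and $|E|-|V|=\gamma_d|\vec{n}|$, identify $MM^*$ and $M^*M$ with $\aup_{d-1}$ and $\adown_d$, and then split the determinant along the $\hat{V}$-decomposition. If anything, your identification of Storm's $|V|\times|E|$ matrix $M$ with $M_{d-1}^*$ (rather than $M_{d-1}$, as the paper loosely writes) is the more careful choice; the end result is the same.
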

\begin{proof}
 We consider the case in which $V = Y_{d-1}$ and $E = Y_d$. 
Then $H=(V,E)$ is $(2(q-d+1),2d)$-regular as in
 \eref{ex:hypergraph}(3) and 
$-\chi(B_H) = \kappa_d |\vec{n}|$.  
 Since the incidence matrix $M$ in \tref{thm:storm2} is set to be
 $\oplus_{\z \in \Vhat} M_{d-1}(\z)$,
we have $M_{d-1}(\z)M^*_{d-1}(\z) = \adown_{d}(\z)$, and
 hence  
\begin{align*}
 \zeta_{Y^{(d)}}(u)^{-1} 
&= (1-u)^{\kappa_d|\vec{n}|} 
 (1 + \alpha_du)^{|Y_{d-1}| - |Y_d|} \\
 &\quad \times
\prod_{\z \in \Vhat} 
\det 
\Big\{(1 + \alpha_du)(1+\beta_du)I_{K_{d-1}} - u
 \adown_{d}(\z) \Big\}, 
\end{align*}
 where $\alpha_d=2(q-d)+1$, $\beta_d = 2d-1$.
 The second equality is obtained similarly. 
This completes the proof. 
\end{proof}

\begin{rem}\label{rem:symmetry}
(1) The former expression is useful for $q \ge 2d-1$, while
the latter is useful for $q \le 2d-1$ as these two conditions are
equivalent to $\gamma_d \ge 0$ and $\gamma_d \le 0$,
 respectively. \\
 (2) The map $d \mapsto q-d+1$ leaves $\alpha_d, \beta_d$
 and $\kappa_d$ invariant, and $\gamma_{q-d+1} = -
 \gamma_d$. It follows from this invariance 
 that $\zeta_{Y^{(q-d+1)}}(u) = \zeta_{Y^{(d)}}(u)$. 
\end{rem}

We obtain \tref{thm:zeta} as a special case of \tref{thm:zetaforcubical}. 

\begin{proof}[Proof of \tref{thm:zeta}]
 We consider the case $d=q$, i.e., $V = Y_{q-1}$ and $E =
 Y_q$. Then $H=(V,E)$ is $(2,2q)$-regular as in \eref{ex:hypergraph}(3).
 In this case, $\adown_q(\z) = M_{q-1}(\z) M_{q-1}^*(\z)$ is
 a scalar, and we have 
$\adown_q(\z) = \sum_{i=1}^q 2(1+ \cos \theta_i)$ from \cref{cor:aqdown}, 
 where $\theta_i = 2\pi k_i /n_i$.
 Then, from \tref{thm:zetaforcubical}, we have 
\begin{align*}
 \zeta_Y(u)^{-1} 
&= (1-u)^{\kappa_q|\vec{n}|}
 (1 + \alpha_q u)^{\gamma_q |\vec{n}|} \\
 &\quad \times
\prod_{k_1=1}^{n_1} \cdots \prod_{k_q=1}^{n_q}
\Big\{(1 + \alpha_q u)(1+\beta_q u) - u \sum_{i=1}^q 2(1+
 \cos \theta_i) \Big\}, 
\end{align*}
 where $\alpha_q=1$, $\beta_q = 2q-1$ and $\kappa_q = \gamma_q = q-1$.
Therefore, we obtain 
\[
 \zeta_Y(u)^{-1} 
 = (1-u^2)^{(q-1)|\vec{n}|} 
 \prod_{k_1=1}^{n_1} \cdots \prod_{k_q=1}^{n_q}
\Big\{1 - 2u \sum_{i=1}^q \cos \theta_i + (2q-1)u^2 \Big\}. 
\]
This completes the proof. 
\end{proof}

We denote by $e_k(\t)$ the $k$th elementary symmetric
polynomial of $\t =(t_1,\dots,t_q)$ defined by the expansion formula 
\begin{equation}
\prod_{k=1}^q (\la+t_k) = \sum_{k=0}^q e_k(\t) \la^{q-k}. 
\label{eq:elementary_symmetric} 
\end{equation}

\begin{prop}
Let $w_i = 2 + z_i + z_i^{-1}$. Then, 
\[
 \det(t - u \aup_1(\z)) 
= \sum_{k=0}^q (2-k)2^{k-1} e_k(\w) (t- u e_1(\w))^{q-k} u^k. 
\]
\end{prop}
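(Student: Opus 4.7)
The plan is to read off the matrix $\aup_1(\z)$ explicitly from \lref{lem:spec}, recognize it as a rank-one perturbation of a diagonal matrix, and then apply the matrix determinant lemma.

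First, I would make the indexing concrete. Since $d=1$, we have $K_{d-1}=K_0=\{1,\dots,q\}$. Setting $a_i=1+z_i$ and $b_i=1+z_i^{-1}$, so that $w_i=a_ib_i$, \lref{lem:spec} yields diagonal entries $a^{\mathrm{up}}_{ii}(\z)=\sum_{j\neq i}(1+z_j^{-1})(1+z_j)=\sum_{j\neq i}w_j=e_1(\w)-w_i$ and off-diagonal entries $a^{\mathrm{up}}_{ij}(\z)=(1+z_j^{-1})(1+z_i)=a_ib_j$ for $i\neq j$. Writing $D=\mathrm{diag}(w_1,\dots,w_q)$, $\mathbf{a}=(a_i)_i$, $\mathbf{b}=(b_i)_i$, this gives the compact decomposition
\[
\aup_1(\z)=e_1(\w)\,I-2D+\mathbf{a}\mathbf{b}^T,
\]
since $(\mathbf{a}\mathbf{b}^T)_{ii}=w_i$ absorbs the off-diagonal entries correctly and corrects the diagonal.

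Second, I would introduce the abbreviation $s:=t-u e_1(\w)$, so that
\[
tI-u\aup_1(\z)=(sI+2uD)-u\,\mathbf{a}\mathbf{b}^T.
\]
Because $sI+2uD$ is diagonal with entries $s+2uw_i$, the matrix determinant lemma gives
\[
\det(tI-u\aup_1(\z))=\prod_{i=1}^q(s+2uw_i)\,\bigl(1-u\,\mathbf{b}^T(sI+2uD)^{-1}\mathbf{a}\bigr)=\prod_{i=1}^q(s+2uw_i)-u\sum_{i=1}^q w_i\!\prod_{j\neq i}\!(s+2uw_j),
\]
using $\mathbf{b}^T(sI+2uD)^{-1}\mathbf{a}=\sum_i w_i/(s+2uw_i)$.

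Third, I would expand each product in powers of $s$. Using \eqref{eq:elementary_symmetric}, $\prod_i(s+2uw_i)=\sum_{k=0}^q(2u)^k e_k(\w) s^{q-k}$, and $\prod_{j\neq i}(s+2uw_j)=\sum_{k=0}^{q-1}(2u)^k e_k(\w_{\hat\imath})s^{q-1-k}$, where $\w_{\hat\imath}$ is $\w$ with the $i$th entry removed. The key combinatorial identity $\sum_{i=1}^q w_i\,e_k(\w_{\hat\imath})=(k+1)e_{k+1}(\w)$ (each degree-$(k{+}1)$ monomial in $e_{k+1}(\w)$ is hit once per factor) lets me collapse the second sum into $\sum_{k=1}^q k\cdot 2^{k-1}u^k e_k(\w)s^{q-k}$ after reindexing. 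Subtracting,
\[
\det(tI-u\aup_1(\z))=\sum_{k=0}^q\bigl(2^k-k\cdot 2^{k-1}\bigr)u^k e_k(\w)s^{q-k}=\sum_{k=0}^q(2-k)2^{k-1}e_k(\w)(t-u e_1(\w))^{q-k}u^k,
\]
which is the claimed identity. There is no serious obstacle; the one thing to watch is the sign/coefficient bookkeeping in the matrix determinant lemma and the reindexing between $\sum_i w_i e_k(\w_{\hat\imath})$ and $e_{k+1}(\w)$, but both are routine once the decomposition $\aup_1(\z)=e_1(\w)I-2D+\mathbf{a}\mathbf{b}^T$ is in place.
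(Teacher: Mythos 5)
Your argument is correct and reaches the stated identity, but by a different route than the paper. You split $\aup_1(\z)=e_1(\w)I-2D+\mathbf{a}\mathbf{b}^T$ (diagonal plus rank one) and invoke the matrix determinant lemma, then finish with the symmetric-function identity $\sum_i w_i\,e_k(\w_{\hat\imath})=(k+1)e_{k+1}(\w)$. The paper instead factors the \emph{multiplicative} normal form $\aup_1(\z)-e_1(\w)I_q=D_{\one+\z^{-1}}(J_q-2I_q)D_{\one+\z}$, reduces to the closed formula $\det(D_{\aa}-J_q)=e_q(\aa)-e_{q-1}(\aa)$, and repackages the answer as $\bigl(1-\tfrac{q}{2}+\tfrac{\lambda}{2}\partial_\lambda\bigr)\prod_i(\lambda+2w_i)$ evaluated at $\lambda=t-e_1(\w)$ (with the general $u$ following by homogeneity). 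Both exploit the fact that the off-diagonal part has rank one; your additive version leads directly to the matrix determinant lemma and a single clean reindexing, which is arguably more transparent, while the paper's version produces a compact differential-operator expression before expansion. One small presentational caveat: \lref{lem:spec} gives the off-diagonal entry as $(1+z_{\sigma'\setminus\sigma}^{-1})(1+z_{\sigma\setminus\sigma'})$, i.e.\ $a_ib_j$ in the $(i,j)$ slot, which is the transpose of what the paper's factorization produces; since $\aup_1(\z)$ is Hermitian and determinants are transpose-invariant this is immaterial, but your reading is the one literally consistent with the lemma, so you are on safe ground.
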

\begin{proof}
From \lref{lem:spec}, we see that the $(j,j)$-element of the matrix 
$\aup_1(\z) - e_1(\w) I_{q}$ is given by 
\[
\sum_{k \not=j} (1+z_k^{-1})(1+z_k) - \sum_{k=1}^q (1+z_k^{-1})(1+z_k^{-1})
= - (1+z_j^{-1})(1+z_j). 
\]
Thus we have 
\[
\aup_1(\z) - e_1(\w) I_{q} = D_{\one + \z^{-1}} (J_q - 2I_q)
 D_{\one + \z}, 
\]
where $J_q$ is the $q \times q$ matrix whose elements are
 all $1$, 
$D_{\one + \z}$ (resp. $D_{\one + \z^{-1}}$) 
is the diagonal matrix whose $(j,j)$-element is $(1+z_j)$ 
(resp. $(1+z_j^{-1})$). 
Hence, setting $\la = t - e_1(\w)$, we obtain 
\begin{align*}
 \det(t - \aup_1(\z)) 
&= \det (\la  - D_{\one + \z^{-1}} (J_q - 2 I_q) D_{\one + \z}) \\
&= \det (\la - (J_q - 2 I_q) D_{\w}) \\
&= \det D_{\w} \det\big( (\la D_{\w^{-1}}+ 2I_q) - J_q \big).  
\end{align*}
It is easy to see that  for any $\aa = (a_1,\dots,a_q)$ the
 relation 
\[ 
\det (D_{\aa} - J_q) = e_q(\aa) - e_{q-1}(\aa)
\]
holds. Thus we obtain   
\begin{align*}
\det(t - \aup_1(\z)) 
&= \left( 1-\frac{q}{2} + \frac{\la}{2} \frac{\partial}{\partial \la} \right) \prod_{i=1}^q (\la + 2 w_i) \Big|_{\la = t - e_1(\w)}. 
\end{align*}
Expanding the right-hand side in $\la$ and using 
\eqref{eq:elementary_symmetric}, we reach the desired
 relation. 
\end{proof}

\begin{proof}[Proof of the latter half of \tref{thm:zeta}]
A simple calculation shows that 
 \begin{align*}
 \zeta_{Y^{(2)}}(u)^{-1} 
 &= (1-u)^{\kappa_2 |\vec{n}| } (1+3u)^{\gamma_2|\vec{n}|}
\times \prod_{\z \in \Vhat} F_1^{\mathrm{up}}(u, \z), 
 \end{align*}
where $\kappa_2 = q(3q-5)/2$, $\gamma_2 = q(q-3)/2$ and 
\[
F_1^{\mathrm{up}}(u, \z) = \sum_{k=0}^q 
(2-k)2^{k-1} e_k(\w) 
 \Big( 1 - u \sum_{i=1}^q (z_i + z_i^{-1}) + 3(2q-3) u^2 \Big)^{q-k} u^k.  
\]
\end{proof}

We remark that the right-hand sides of the 
two expressions in \tref{thm:zeta} must coincide for $q=2$ by symmetry. 
Indeed, when $q=2$, we see that $\kappa=1$, $\gamma = -1$ and 
$F_1^{\mathrm{up}}(u, \z) = 
(1+u)(1+3u)\big(1 - u \sum_{i=1}^2 (z_i + z_i^{-1}) + 3 u^2
\big)$. Thus, in this case, 
both of these are equal to $(1-u^2)^{|\vec{n}|} 
\prod_{k_1=1}^{n_1}\prod_{k_2=1}^{n_2}\big(1 - 2u \sum_{i=1}^2 \cos 2 \pi k_i/n_i + 3 u^2 \big)$.

\section{Cyclotomic-like polynomials}
\label{sec:polynomials}

 For $\vec{d} = (d_1,\dots,d_q) \in \N^q$, 
 we define the following polynomial in $x$: 
\begin{equation}
 \Psiprime_{\vec{d}}(x) := 
\prod_{j_1 \in J_{d_1}} \cdots \prod_{j_q \in J_{d_q}} \Big(x -
2\sum_{i=1}^q \cos \frac{2\pi j_i}{d_i}\Big),  
\label{eq:cyclotomic-like}
\end{equation}
With the notation used in Introduction, this would be
written $\Psi_{\vec{d}}(x,1)$, but here, we use the above
more concise notation. 
Since the homogeneous polynomial $\Psi_{\vec{d}}(x,y)$ 
can be recovered from $\Psi_{\vec{d}}(x)$, hereafter we
focus on $\Psi_{\vec{d}}(x)$. 
We note that $ \Psiprime_{\vec{d}} =  \Psiprime_{\vec{d}'}$ 
when $\vec{d}'$ is a permutation of $\vec{d}$ and 
that the degree of $\Psi_{\vec{d}}$ is equal to $\prod_{i=1}^q
\tilde{\phi}(d_i)$. 
For $q=1$, there is a known explicit form of
$\Psiprime_d(x)$ 
that can be obtained using cyclotomic polynomials.

\begin{lem}\label{lem:Psid}
Suppose $q=1$. 
Then $z^{\tilde{\phi}(d)}\Psiprime_{d}(z+z^{-1})$
is the $d$th cyclotomic polynomial for $d \ge 3$ and 
the square of that for $d=1,2$. 
Moreover, $\Psi_d(x)$ is irreducible for any $d \in \N$. 
\end{lem}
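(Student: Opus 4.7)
The plan is to relate $\Psi_d(x)$ to the $d$th cyclotomic polynomial $\Phi_d(z)$ via the substitution $x = z + z^{-1}$, and then deduce irreducibility from the well-known irreducibility of $\Phi_d$ over $\mathbb{Z}$.

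First, for $d \ge 3$, I would pair up primitive $d$th roots of unity with their complex conjugates: for $j$ coprime to $d$, the conjugate of $e^{2\pi \sqrt{-1} j/d}$ is $e^{2\pi \sqrt{-1} (d-j)/d}$, and since $d \ge 3$ we have $d - j \not\equiv j \pmod d$. Thus the involution $\iota(j) = d-j \bmod d$ acts freely on $(\Z/d\Z)^\times$, and $J_d$ is a complete set of orbit representatives. Grouping the conjugate factors in the standard product expansion of $\Phi_d(z)$ then gives
\[
\Phi_d(z) = \prod_{j \in J_d}\Big(z - e^{2\pi \sqrt{-1} j/d}\Big)\Big(z - e^{-2\pi \sqrt{-1} j/d}\Big)
= \prod_{j \in J_d}\Big(z^2 - 2z\cos\tfrac{2\pi j}{d} + 1\Big),
\]
and factoring out $z$ from each quadratic yields $\Phi_d(z) = z^{\tilde\phi(d)} \Psi_d(z + z^{-1})$, as claimed.

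For $d=1,2$, the polynomials are computed directly: $\Psi_1(x) = x - 2$ gives $z\,\Psi_1(z+z^{-1}) = z^2 - 2z + 1 = (z-1)^2 = \Phi_1(z)^2$, and $\Psi_2(x) = x+2$ gives $z\,\Psi_2(z+z^{-1}) = (z+1)^2 = \Phi_2(z)^2$.

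For irreducibility: the cases $d=1,2$ are trivial since $\Psi_d$ is linear. For $d \ge 3$, suppose for contradiction that $\Psi_d(x) = p(x)q(x)$ with $p,q \in \Z[x]$ (which we may assume, by Gauss's lemma applied to any factorization over $\Q$) and $\deg p, \deg q \ge 1$. If $\deg p = k$, then $z^k p(z+z^{-1}) \in \Z[z]$ is a polynomial (not merely a Laurent polynomial) of degree exactly $2k$ — its leading coefficient coincides with that of $p$, and the lowest-degree term is also nonzero since $p$ has nonzero constant term (none of the roots $2\sum\cos(2\pi j_i/d_i)$ of $\Psi_d$ vanish for $d \ge 3$, so $\Psi_d(0) \ne 0$, hence neither $p(0)$ nor $q(0)$ is zero). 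Combining these, Step 1 lets me rewrite $\Phi_d(z) = \bigl(z^{\deg p} p(z + z^{-1})\bigr) \cdot \bigl(z^{\deg q} q(z + z^{-1})\bigr)$ as a product of two non-constant polynomials in $\Z[z]$, contradicting the irreducibility of $\Phi_d$. Hence $\Psi_d(x)$ is irreducible.

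There is no real obstacle here; the main bookkeeping is to confirm in Step 1 that the involution $\iota$ is fixed-point free for $d \ge 3$ and to justify in Step 3 that the substitution $x \mapsto z + z^{-1}$ followed by multiplication by $z^{\deg}$ really produces a polynomial factorization (not merely a Laurent one), which in turn reduces to checking $\Psi_d(0) \ne 0$.
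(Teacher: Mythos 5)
Your approach is essentially the same as the paper's: establish the identity $\Phi_d(z) = z^{\tilde\phi(d)}\Psi_d(z+z^{-1})$ for $d \ge 3$ (the paper verifies the degree, monicity and the root set directly, while you group conjugate factors of $\Phi_d$ and pull out powers of $z$ -- the same computation, read in the opposite direction), and then transfer irreducibility from $\Phi_d$ to $\Psi_d$.

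One auxiliary claim in your irreducibility step is false, though: you assert $\Psi_d(0) \ne 0$ for all $d \ge 3$, but $J_4 = \{1\}$ and $\Psi_4(x) = x - 2\cos(\pi/2) = x$, so $\Psi_4(0) = 0$ (the paper's table also records $\Psi_4(x) = x$). This does not break the proof, because the claim is not actually what controls the lowest-degree term. Writing $p(x) = \sum_{j=0}^k a_j x^j$ with $a_k \ne 0$, the expression $z^k p(z+z^{-1}) = \sum_{j} a_j \sum_{i=0}^{j} \binom{j}{i} z^{k+j-2i}$ has exponents ranging over $[k-j, k+j] \subset [0,2k]$, so it is automatically a genuine polynomial, and its constant term arises only from $j=k$, $i=k$, giving $a_k$ -- the \emph{leading} coefficient of $p$, not $p(0)$. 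So the constant term is nonzero for free. More to the point, you do not even need it: the two factors $z^{\deg p}p(z+z^{-1})$ and $z^{\deg q}q(z+z^{-1})$ have degrees $2\deg p$ and $2\deg q$, both $\ge 2$, so their product already gives a nontrivial factorization of $\Phi_d$ in $\Z[z]$, contradicting its irreducibility. With that detour removed, the argument is correct and matches the paper's.
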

\begin{proof}
The proof is trivial for $d=1,2$. Suppose $d \ge 3$. 
It is easily seen that 
$z^{\tilde{\phi}(d)}\Psiprime_{d}(z+z^{-1})$ is a monic
 polynomial in $z$ of degree $2 \tilde{\phi}(d)$, 
since the degree of $\Psiprime_{d}(x) \in \Z[x]$ is $\tilde{\phi}(d)$.
This polynomial has the following $\varphi(d)= 2 |J_d|$ distinct roots
\[
\{ z= \exp(2\pi \sqrt{-1} j/d) \mid j \in J_d \}
\cup
\{ z= \exp(-2\pi \sqrt{-1} j/d) \mid j \in J_d \}.
\]
These properties characterize the cyclotomic polynomial.
If $\Psi_d(x)$ is reducible, then so is $\Psi_d(z +
 z^{-1})$. This contradicts the irreducibility of cyclotomic
 polynomials. 
\end{proof}

\begin{ex}\label{ex:table}
$\Phi_d(x)$ is the $d$th cyclotomic polynomial, and 
$\Psi_d(x)$ is that defined in \eqref{eq:cyclotomic-like} for $q=1$. 
We have the following explicit forms: 
\begin{center}
\begin{minipage}{7cm}
\begin{align*}
\Phi_1(x) &= x-1 \\
\Phi_2(x) &= x+1 \\
\Phi_3(x) &= x^2+x+1 \\
\Phi_4(x) &= x^2+1 \\ 
\Phi_5(x) &= x^4+x^3+x^2+x+1 \\ 
\Phi_6(x) &= x^2-x+1 \\
\Phi_7(x) &= x^6+x^5+x^4+x^3+x^2+x+1 \\
\Phi_8(x) &= x^4+1 \\
\Phi_9(x) &= x^6+x^3+1\\
\Phi_{10}(x) &= x^4-x^3+x^2-x+1 
\end{align*}
\end{minipage}
\begin{minipage}{5cm}
\begin{align*}
\Psi_1(x) &= x-2\\
\Psi_2(x) &= x+2\\
\Psi_3(x) &= x+1 \\
\Psi_4(x) &= x \\ 
\Psi_5(x) &= x^2+x-1\\
\Psi_6(x) &= x-1 \\
\Psi_7(x) &= x^3+x^2-2 x-1 \\
\Psi_8(x) &= x^2-2 \\
\Psi_9(x) &= x^3-3x+1 \\
\Psi_{10}(x) &= x^2-x-1 
\end{align*}
\end{minipage}
\end{center}

\noindent
Suppose that $d_1=1,2,3,4$ or $6$. 
Then, for $\vec{d'}=(d_2,\ldots,d_q)$, 
we have $ \Psiprime_{\vec{d}}(x) =
 \Psiprime_{\vec{d}'}(\Psi_{d_1}(x))$ since $J_{d_1} =
 \{1\}$. 
From this observation, one can compute several 
$\Psi_{\vec{d}}$s by using the above table. 
\end{ex}

In \tref{thm:zeta}, the polynomial 
\begin{equation}
F_{\vec{n}}(x,y) 
:= \prod_{k_1=1}^{n_1} \cdots 
\prod_{k_q=1}^{n_q} 
\Big(x -2y\sum_{i=1}^q \cos \frac{2\pi k_i}{n_i}\Big)
\label{eq:product}
\end{equation}
appears. This polynomial can be decomposed in terms of $\Psi_{\vec{d}}(x,y)$ as 
$x^n-1$ is decomposed into a product of cyclotomic polynomials $\Phi_d(x)$. 

\begin{proof}[Proof of \tref{thm:zeta2}]
Let $\tilde{J}_d = \{j \in \{1,2,\dots,d\} : \gcd(j,d)=1\}$. 
We note that 
\begin{align*}
\big\{\frac{k}{n} \ | \ k \in \{1,2,\dots, n\} \big\}
&= 
\bigsqcup_{d | n} 
\big\{ \frac{j}{d} \ | \ j \in \tilde{J}_d \big\}
\end{align*}
and each set on the right-hand side can be further decomposed as
\[
 \big\{ \frac{j}{d} \ | \ j \in \tilde{J}_d \big\}
= \begin{cases}
\big\{ \frac{j}{d} \ | \ j \in J_d\}
   \sqcup \big\{ \frac{d-j}{d} \ | \ j \in J_d\}  &
   \text{for $d \ge 3$}, \\
\big\{ \frac{j}{d} \ | \ j \in J_d\} & \text{for $d =1,2$}. 
  \end{cases}
\]
Therefore, we have 
\begin{align*}
F_{\vec{n}}(x,y) &= 
\prod_{d_1|n_1}\!
\cdots \prod_{d_q|n_q}\!
\prod_{j_1 \in J_{d_1}}\! \cdots \!\prod_{j_q \in J_{d_q}} 
\Big(x -2y\sum_{i=1}^q \cos \frac{2\pi
 j_i}{d_i}\Big)^{\epsilon(d_1) \times \cdots \times \epsilon(d_q)}\\
&= 
\prod_{d_1|n_1}\!
\cdots \!\prod_{d_q|n_q}
\Psi_{\vec{d}}(x,y)^{\epsilon(\vec{d})}, 
\end{align*}
where $\epsilon(d) = 2$ for $d \ge 3$ and $\epsilon(d)=1$ for $d =1,2$. 
This completes the proof 
since $\zeta_Y(u)^{-1} = (1-u^2)^{(q-1)|\vec{n}|}
 F_{\vec{n}}(1 + (2q-1)u^2, u)$. 
\end{proof}

Now, in order to factorize $\Psi_{\vec{d}}(x)$ further, 
we consider the orbit structure for Galois actions. 

Suppose $d_i \ge 3$ for $i =1,\ldots,q$.
Let $N=d_1\cdots d_q$.
We identify $J_d$ for $d\ge 3$ with the set of representatives of
$(\Z/d \Z)^\times$ modulo $x \mapsto -x$.
We identify $(\Z/N\Z)^\times $
with $\{ m \in \N \mid m < N, \gcd(m,N)=1\}$.
The group $(\Z/N\Z)^\times$ acts on
$J_{d_1} \times \cdots \times J_{d_q}$ as component-wise multiplication:
\begin{equation}
(j_1,\ldots, j_q) \mapsto (a j_1,\ldots, a j_q)
\label{equation:GaloisAction}  
\end{equation}
for $a \in (\Z/N\Z)^\times$.

Before proving \tref{thm:numoforbits}, we give the following
lemma. 
\begin{lem}\label{lem:freeaction}
For $(d_1,d_2,\dots,d_q)$ with $d_i \ge 3$ for all $i=1,\ldots,q$, 
put $N':=\lcm(d_1,\ldots,d_q)$,
$G:=(\Z/N'\Z)^\times$ and 
\[
H:=\{ g \in G \mid \exists(\varepsilon_i) \in \{ \pm 1 \}^q
\text{ such that } g\equiv \varepsilon_i \mod d_i (i=1,\ldots,q) \}.
\]
Then, 
the quotient group $G/H$ acts on $J_{d_1} \times \cdots \times J_{d_q}$
freely. In particular, every $G/H$-orbit on $J_{d_1} \times \cdots \times J_{d_q}$
has $|G/H|$ elements.  
The cardinality of $H$ is $2^{\beta_0(\Gamma(V))}$, where 
 $V=\{1,2,\dots,q\}$ and $\beta_0(\Gamma(V))$ is the $0$th
 Betti number, i.e., the number of connected components of $\Gamma(V)$. 
 \end{lem}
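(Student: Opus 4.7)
The plan is to proceed in two stages: first identify $H$ with the common stabilizer of every point of $J_{d_1}\times\cdots\times J_{d_q}$ under the $G$-action, which yields freeness of the induced $G/H$-action; then compute $|H|$ by combining the generalized Chinese Remainder Theorem with a graph-theoretic bookkeeping on $\Gamma(V)$.

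For the first stage, I would unpack the action carefully. For $a\in G$, the product $aj_i$ is an element of $(\Z/d_i\Z)^\times$, which we identify with its representative in $J_{d_i}$ via the involution $k\mapsto d_i-k$; this produces a sign $\varepsilon_i\in\{\pm 1\}$ with $aj_i\equiv \varepsilon_i j_i\pmod{d_i}$. Thus $a$ fixes $(j_1,\dots,j_q)$ iff such signs $\varepsilon_i$ exist for every $i$, and since $\gcd(j_i,d_i)=1$ one may cancel $j_i$ to obtain the equivalent condition $a\equiv \varepsilon_i\pmod{d_i}$, which depends only on $a$. The common stabilizer is therefore exactly $H$; since $G$ is abelian, $H$ is automatically normal, and the induced $G/H$-action is free, with every orbit having $|G/H|$ elements.

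For the second stage, I would invoke the generalized CRT: a system $g\equiv \varepsilon_i\pmod{d_i}$ $(i=1,\dots,q)$ admits a solution $g$, unique modulo $N'=\lcm(d_1,\dots,d_q)$, if and only if the pairwise compatibility $\varepsilon_i\equiv \varepsilon_j\pmod{\gcd(d_i,d_j)}$ holds for all $i,j$. Any such solution automatically lies in $(\Z/N'\Z)^\times$: every prime $p$ dividing $N'$ divides some $d_i$, and then $g\equiv\pm 1\pmod{p}$ rules out $p\mid g$. Consequently $|H|$ equals the number of compatible sign vectors $(\varepsilon_1,\dots,\varepsilon_q)\in\{\pm 1\}^q$.

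The final step translates compatibility into $\Gamma(V)$: the congruence $\varepsilon_i\equiv\varepsilon_j\pmod{\gcd(d_i,d_j)}$ is automatic when $\gcd(d_i,d_j)\le 2$, and forces $\varepsilon_i=\varepsilon_j$ when $\gcd(d_i,d_j)\ge 3$. Since an edge $\{i,j\}$ appears in $\Gamma(V)$ exactly in the latter case, the admissible sign vectors are precisely the functions $V\to\{\pm 1\}$ that are constant on each connected component of $\Gamma(V)$, so $|H|=2^{\beta_0(\Gamma(V))}$. The main delicacy is the careful use of the generalized CRT together with the verification that the lifted $g$ is in fact a unit modulo $N'$; after that, everything reduces to a direct combinatorial translation of the stabilizer condition onto $\Gamma(V)$.
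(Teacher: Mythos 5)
Your proof is correct and follows essentially the same approach as the paper: you identify $H$ as the common stabilizer (hence freeness of the $G/H$-action) and then count compatible sign vectors, translating the compatibility condition $\varepsilon_i\equiv\varepsilon_j\pmod{\gcd(d_i,d_j)}$ into constancy on connected components of $\Gamma(V)$. The only organizational difference is that the paper first collapses each component $A_k$ into a single congruence modulo $N_k=\lcm(d_i : i\in A_k)$ and then notes $\gcd(N_k,N_{k'})\le 2$, whereas you invoke the generalized CRT directly on the original system; your version also spells out the stabilizer identification and the unit check, which the paper leaves implicit.
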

 \begin{proof}
We express the decomposition into connected components as 
$\{1,\ldots, q\} = \sqcup_{k=1}^{\beta_0} A_k$, where
  $\beta_0$ represents $\beta_0(\Gamma(V))$.
Note that 
\[
H = \{ g \in G \mid \exists(\varepsilon_k) \in \{\pm 1\}^{\beta_0}
\text{ such that } g \equiv \varepsilon_k \mod d_i \mbox{ for } i \in A_k \}.
\]
In fact, we see that $\varepsilon_i \equiv g \equiv \varepsilon_j \mod \gcd(d_i,d_j)$.
Thus, if $\gcd(d_i,d_j) \ge 3$, then $\varepsilon_i = \varepsilon_j$. 
This implies the above identity.
Put $N_k :=\lcm(d_i \mid i \in A_k)$ for $k=1,\ldots, \beta_0$.
Then, we can write 
\begin{align*}
H &= \{ g \in G \mid \exists(\varepsilon_k) \in \{\pm 1\}^{\beta_0}
\mbox{ such that } g \equiv \varepsilon_k \mod N_k \}.  
\end{align*}
Also, it is seen that $\gcd(N_k,N_{k'})\le 2$ for $k\neq
  k'$. 
It follows that the map 
\[
H \rightarrow (\varepsilon_k) \in \{\pm 1\}^{\beta_0}
\]
is bijective.
\end{proof}

\begin{proof}[Proof of \tref{thm:numoforbits}]
For simplicity, 
$\vec{d}$ is assumed to be rearranged 
in such a way that 
$d_1 \ge \dots \ge d_{q'} \ge 3 > d_{q'+1} \ge \dots \ge
 d_q$, and we write $\vec{d'} = (d_1,\dots,d_{q'})$.  
It is clear that $\orb(\vec{d}) = \orb(\vec{d'})$ since
 $J_1$ and $J_2$ are singletons, 
and 
since $\tilde{\phi}(1) = \tilde{\phi}(2) = 1$ and 
$\tilde{\phi}(\lcm(m,2)) = \tilde{\phi}(m)$ for any $m \in \N$, we have 
\[
\frac{\prod_{i=1}^{q} 
 \tilde{\phi}(d_i)}{\tilde{\phi}(\lcm(d_1,d_2,\dots,d_q))} 
= \frac{\prod_{i=1}^{q'} 
 \tilde{\phi}(d_i)}{\tilde{\phi}(\lcm(d_1,d_2,\dots,d_q'))}. 
\] 
Therefore, it suffices to consider the case $d_i \ge 3$ for all $i =1,2,\dots,q$. 
From \lref{lem:freeaction}, it follows that 
\[
 \orb(\vec{d}) = \frac{|J_{d_1} \times \cdots \times J_{d_q}|}{|G/H|} 
= \frac{\prod_{i=1}^{q} 
 \tilde{\phi}(d_i)}{\tilde{\phi}(\lcm(d_1,d_2,\dots,d_q))} 2^{\beta_0(\Gamma(V))-1}.  
\]
This completes the proof. 
\end{proof}

We next 
give a criterion for the irreducibility of $\Psi_{\vec{d}}(x)$, which is later 
used to obtain a condition for $\Psi_{\vec{d}}(x ; \cO)$
defined in \eqref{eq:Psiorbit} to factor into the powers of a linear function for $q=2$. 

\begin{prop}\label{proposition:injective}
\textup{(1)} If $\mathcal{O} \subset J_{d_1} \times \cdots \times J_{d_q}$
is stable under the action of $(\Z/N\Z)^\times$, 
then $\Psi_{\vec{d}}(x; \mathcal{O}) \in \Z[x]$. \\
\textup{(2)} Let $\mathcal{O} \subset J_{d_1} \times \cdots \times J_{d_q}$
be a $(\Z/N\Z)^\times$-orbit. 
Then the map 
\begin{equation}\label{equation:injective}
c: \mathcal{O} \ni (j_1,\ldots,j_q) \mapsto 2 \sum_{i=1}^q \cos \frac{2\pi j_i}{d_i} \in \R
\end{equation}
is injective if and only if $\Psi_{\vec{d}}(x; \mathcal{O}) \in \Z[x]$ is irreducible.  \\
\end{prop}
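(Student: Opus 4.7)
The plan is to realize the roots of $\Psi_{\vec{d}}(x;\mathcal{O})$ inside the cyclotomic field $\Q(\zeta_N)$, where $\zeta_N = \exp(2\pi\sqrt{-1}/N)$, and then translate the combinatorial $(\Z/N\Z)^\times$-action on $J_{d_1}\times\cdots\times J_{d_q}$ into the natural Galois action on $\Q(\zeta_N)$. Using the identity
\[
2\cos\frac{2\pi j_i}{d_i} \;=\; \zeta_N^{(N/d_i)j_i} + \zeta_N^{-(N/d_i)j_i},
\]
each root $c(j_1,\ldots,j_q)$ is a sum of roots of unity in $\Q(\zeta_N)$ and hence an algebraic integer. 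I would first identify $\Gal(\Q(\zeta_N)/\Q)$ with $(\Z/N\Z)^\times$ via $\sigma_a(\zeta_N)=\zeta_N^a$ and verify the compatibility
\[
\sigma_a\bigl(c(j_1,\ldots,j_q)\bigr) \;=\; c(aj_1,\ldots,aj_q),
\]
where on the right-hand side each $aj_i$ is reduced modulo $d_i$ and, if necessary, replaced by $d_i - aj_i$ so as to lie in $J_{d_i}$; this reduction is legitimate because $c$ is invariant under each involution $j_i \mapsto d_i - j_i$.

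For part (1), this compatibility together with the Galois-stability of $\mathcal{O}$ immediately gives $\sigma_a\,\Psi_{\vec{d}}(x;\mathcal{O}) = \Psi_{\vec{d}}(x;\mathcal{O})$ for every $a\in(\Z/N\Z)^\times$, so the coefficients are fixed by $\Gal(\Q(\zeta_N)/\Q)$ and lie in $\Q$. Since the roots are algebraic integers, so are the coefficients, which therefore lie in $\Z$.

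For part (2), I would invoke the standard fact that a monic polynomial in $\Q[x]$ is irreducible precisely when its root multiset is a single Galois orbit consisting of pairwise distinct elements. Part (1) applies to a single orbit $\mathcal{O}$, giving $\Psi_{\vec{d}}(x;\mathcal{O})\in\Z[x]$, and the compatibility displayed above shows that the root multiset of $\Psi_{\vec{d}}(x;\mathcal{O})$ forms a single Galois orbit. Irreducibility is therefore equivalent to the roots being distinct — equivalently, to injectivity of $c$ on $\mathcal{O}$ — which yields both implications at once, using separability of irreducible polynomials in characteristic zero for the direction $\Leftarrow$.

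The bulk of the actual work lies in the bookkeeping around the quotient $J_{d_i} = (\Z/d_i\Z)^\times/\{\pm 1\}$: one must check that component-wise multiplication of representatives, followed by the sign flips needed to return to $J_{d_i}$, is compatible with the Galois action on the corresponding trigonometric sums. The verification is elementary but is where errors are most likely to creep in, so I would carry it out once carefully at the outset and then use it freely.
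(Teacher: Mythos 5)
Your proof is correct and follows essentially the same route as the paper: identify $\Gal(\Q(\zeta_N)/\Q)$ with $(\Z/N\Z)^\times$, observe that this Galois action corresponds to the combinatorial action on $J_{d_1}\times\cdots\times J_{d_q}$, deduce $\Z[x]$-coefficients from Galois-invariance plus integrality of roots, and obtain irreducibility from transitivity of the Galois action when $c$ is injective. You spell out the compatibility check and the standard irreducibility criterion more explicitly than the paper does, but there is no real difference in method.
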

\begin{proof}
The Galois group $\Gal(\Q(\zeta_N)/\Q)$ is identified with
$(\Z/N\Z)^\times$, where $\zeta_N=\exp(2\pi\sqrt{-1}/N)$ is
an $N$th primitive root of unity.
The Galois action of an element $a \in  (\Z/N\Z)^\times$ to
an element $c(j_1,\ldots,j_q) \in \Q(\zeta_N)$ is given by
the action \eqref{equation:GaloisAction}.
If $\mathcal{O}$
is stable under the action of $(\Z/N\Z)^\times$, 
then $\Psi_{\vec{d}}(x; \mathcal{O}) \in \Q[x]$.
Since the coefficients of $\Psi_{\vec{d}}(x; \mathcal{O})$ are algebraic integers,
we obtain the first assertion (1).
If the map $c$ is not injective, then $\Psi_{\vec{d}}(x;
 \mathcal{O})$ has a multiple root,
and hence it cannot be irreducible.
Conversely, if the map $c$ is injective on an orbit $\mathcal{O}$,
then the action of $(\Z/N\Z)^\times$ on the roots of $\Psi_{\vec{d}}(x; \mathcal{O})$
is transitive, and therefore the polynomial $\Psi_{\vec{d}}(x; \mathcal{O})$ is irreducible.
\end{proof}

We now give an example of Proposition~\ref{proposition:injective}(2).
\begin{cor}\label{lem:inj2irr}
Suppose $d_1,\ldots,d_q$ are relatively prime and $d_i \ge 3$ for all $i=1,2,\dots,q$.
Then the action of $(\Z/N\Z)^\times$ on 
$J_{d_1} \times\cdots\times J_{d_q}$ is transitive,
the map $c$ in \eqref{equation:injective} is injective, 
and $\Psiprime_{\vec{d}}(x) \in \Z[x]$ is irreducible.
\end{cor}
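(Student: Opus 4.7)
The plan is to establish the three conclusions of the corollary -- transitivity of the $(\Z/N\Z)^\times$-action, injectivity of $c$, and irreducibility of $\Psi_{\vec d}(x)$ -- in that order, deducing irreducibility from injectivity via \pref{proposition:injective}(2). So the only real content is transitivity plus injectivity.

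Transitivity follows from \tref{thm:numoforbits}: since the $d_i\ge 3$ are pairwise coprime, $\gcd(d_i,d_j)=1<3$ for $i\ne j$, so $\Gamma(V_{\vec d})$ has no edges, and $\tilde\beta_0(\Gamma(V_{\vec d}))=q-1$. Because $\lcm(d_1,\dots,d_q)=N$ and $\phi(N)=\prod_i\phi(d_i)=2^q\prod_i\tilde\phi(d_i)$, we get $\tilde\phi(N)=2^{q-1}\prod_i\tilde\phi(d_i)$. Substituting into the formula of \tref{thm:numoforbits} yields $\orb(\vec d)=1$, so $J_{d_1}\times\cdots\times J_{d_q}$ is a single orbit $\cO$, and $\Psi_{\vec d}(x)=\Psi_{\vec d}(x;\cO)$.

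For injectivity, suppose $c(\vec j)=c(\vec j')$ and set $\beta_i:=2\cos\frac{2\pi j_i}{d_i}-2\cos\frac{2\pi j_i'}{d_i}\in K_i:=\Q(\zeta_{d_i})^+$, so that $\sum_i\beta_i=0$. Because the $d_i$ are pairwise coprime, the cyclotomic fields $\Q(\zeta_{d_i})$ are linearly disjoint over $\Q$, hence so are their totally real subfields $K_i$, and the multiplication map $K_1\otimes_\Q\cdots\otimes_\Q K_q\to\Q(\zeta_N)^+$ is an isomorphism of $\Q$-algebras. Choose for each $K_i$ a $\Q$-basis whose first element is $1$; then in the induced tensor basis of $\Q(\zeta_N)^+$, the element $\beta_i$ contributes only to basis vectors whose components are $1$ in every slot except the $i$th. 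Vanishing of $\sum_i\beta_i$ therefore forces the non-constant components of every $\beta_i$ to be zero, i.e.\ $\beta_i\in\Q$ for each $i$. Now $2\cos(2\pi j_i/d_i)$ and $2\cos(2\pi j_i'/d_i)$ are Galois-conjugate over $\Q$ (both are roots of $\Psi_{d_i}$, irreducible by \lref{lem:Psid}), so $\mathrm{Tr}_{K_i/\Q}(\beta_i)=0$; but $\beta_i\in\Q$ gives $\mathrm{Tr}_{K_i/\Q}(\beta_i)=\tilde\phi(d_i)\,\beta_i$. Hence $\beta_i=0$, which means $j_i$ and $j_i'$ represent the same class in $J_{d_i}$. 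This proves injectivity, and then \pref{proposition:injective}(2) delivers irreducibility of $\Psi_{\vec d}(x)$.

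The main obstacle is the injectivity step: one needs the linear-disjointness of the cyclotomic fields to convert the single scalar equation $\sum_i\beta_i=0$ into the componentwise conclusion $\beta_i\in\Q$. Once that is in hand, the trace computation that promotes $\beta_i\in\Q$ to $\beta_i=0$ is routine. An alternative is to argue directly with the Galois stabilizer of $c(\vec j)$ inside $(\Z/N\Z)^\times/\{\pm 1\}$ and use \lref{lem:freeaction}, but that essentially recasts the same linear-disjointness input.
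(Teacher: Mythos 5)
Your argument is correct in substance but takes a somewhat different route from the paper's for both of the nontrivial steps, and there is one imprecision worth flagging.

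For transitivity, the paper argues directly: by the Chinese Remainder Theorem any $(j_1,\ldots,j_q)$ can be written as $g\cdot(1,\ldots,1)$ for a suitable $g$, so there is a single orbit. You instead specialize the orbit-count formula of \tref{thm:numoforbits} and verify it equals $1$. Both are valid; the paper's version is more elementary and avoids invoking the counting theorem (which in turn relies on \lref{lem:freeaction}), while yours packages the computation cleanly once the counting theorem is in hand. For injectivity, both arguments rest on the same number-theoretic input (disjointness of cyclotomic fields for coprime moduli) followed by a trace computation. The paper peels off one coordinate at a time -- using $\Q(\zeta_{d_1})\cap\Q(\zeta_{d_2\cdots d_q})=\Q$ to get $\beta_1\in\Q$, then the trace over $K_1$ to get $\beta_1=0$, then induction on $q$. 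You instead decompose all coordinates at once via the tensor-product description of the composite field, concluding $\beta_i\in\Q$ for every $i$ in one step, and then apply the trace argument to each $i$ separately. These are the same idea; yours avoids the induction at the cost of invoking full linear disjointness (a multi-factor statement) rather than just a single intersection.

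One imprecision: the multiplication map $K_1\otimes_\Q\cdots\otimes_\Q K_q\to\Q(\zeta_N)^+$ is \emph{not} an isomorphism. Its source has $\Q$-dimension $\prod_i\tilde\phi(d_i)$, whereas $[\Q(\zeta_N)^+:\Q]=\tilde\phi(N)=2^{q-1}\prod_i\tilde\phi(d_i)$, so the compositum $K_1\cdots K_q$ is a proper subfield of $\Q(\zeta_N)^+$ when $q\ge 2$. What you actually need -- and what linear disjointness gives you -- is only that the multiplication map is \emph{injective}, so that the tensor-basis coefficients of $\sum_i\beta_i$ can be read off inside $K_1\cdots K_q$. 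With ``isomorphism onto $\Q(\zeta_N)^+$'' replaced by ``injection, with image the compositum $K_1\cdots K_q$,'' the argument is correct; as written, the claim is false as stated, although it does not damage the logic that follows.

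Your closing remark about \lref{lem:freeaction} is right: simple transitivity of $(\Z/N\Z)^\times/\{\pm 1\}$ on $J_{d_1}\times\cdots\times J_{d_q}$ does not by itself give injectivity of $c$, because the Galois stabilizer of the algebraic number $c(\vec j)$ could be strictly larger than the stabilizer of $\vec j$ in the combinatorial action. Ruling that out is exactly what the linear-disjointness/trace argument does.
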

\begin{proof}
For any $(j_1,\ldots,j_q) \in J_{d_1} \times \cdots \times J_{d_q}$, 
there exists $g \in \Z$ such that
$j_i \equiv g \mod d_i$ for all $i=1,\ldots,q$ from the 
Chinese Remainder Theorem.
This shows that $(j_1,\ldots,j_q)=(g 1,\ldots, g 1)$ belongs to an orbit of $(1,\ldots,1)$,
and thus the transitivity follows. 
The injectivity is proved as follows. 
First, suppose $c(j_1,\ldots,j_q) = c(j'_1,\ldots,j'_q)$.
Then,  
\begin{align*}\label{equation:rational_identity}
-\cos(2\pi j_1/d_1) + \cos(2\pi j'_1/d_1)
& = \sum_{i=2}^q \left( \cos(2\pi j_i/d_i) - \cos(2 \pi j'_i/d_i) \right)
\end{align*}
is an element of 
$\Q(\zeta_{d_1}) \cap \Q(\zeta_{d_2\cdots d_q}) = \Q$
since $d_1$ and $d_2\cdots d_q$ are coprime. 
There exists a rational number $b$ such that
$-\cos(2\pi j_1/d_1) + \cos(2\pi j'_1/d_1)=b$.
Taking the Galois conjugates, and summing up over $J_{d_1}$,
 we have
\[
\tilde{\phi}(d_1) b
=
-\sum_{j_1 \in J_{d_1}} \cos \frac{2\pi j_1}{d_1} + \sum_{j'_1 \in J_{d_1}} \cos \frac{2\pi j'_1}{d_1} 
=0.
\]
This implies $b=0$ and $j'_1=j_1$. 
Then, by induction on $q$, we have the injectivity. 
The irreducibility follows from \pref{proposition:injective}(2).
\end{proof}

The polynomial $\Psi_{\vec{d}}(x; \mathcal{O})$ may not be 
irreducible. The following lemma clarifies the situation in this regard. 
\begin{lem}\label{lem:fiber}
 Let $\mathcal{O} \subset J_{d_1} \times \cdots \times J_{d_q}$ be a $(\Z/N\Z)^\times$-orbit. 
Then the fibers of the map $c$ in \eqref{equation:injective}
 have the same cardinality.  
In particular, there exist an irreducible polynomial
$\Psiirred_{\vec{d}}(x; \mathcal{O}) \in \Z[x]$
and a number $m_{\mathcal{O}} \in \N$
such that
$\Psi_{\vec{d}}(x; \mathcal{O})
= \Psiirred_{\vec{d}}(x; \mathcal{O})^{m_{\mathcal{O}}}$.
\end{lem}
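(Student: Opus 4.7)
The plan is to exploit the $G$-equivariance of the map $c$, where $G := (\Z/N\Z)^\times \cong \Gal(\Q(\zeta_N)/\Q)$ acts on $\mathcal{O}$ by componentwise multiplication and on $\Q(\zeta_N)$ through the Galois action, sending $\zeta_N \mapsto \zeta_N^a$ for $a \in G$. Under this identification, the relation $\sigma_a(c(j_1,\ldots,j_q)) = c(a j_1,\ldots,a j_q)$ holds, so $c|_\mathcal{O} : \mathcal{O} \to c(\mathcal{O}) \subset \Q(\zeta_N) \cap \R$ is $G$-equivariant.

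First, I would note that since $\mathcal{O}$ is a single $G$-orbit, $G$ acts transitively on the image $c(\mathcal{O})$ as well. Given any two $\alpha, \beta \in c(\mathcal{O})$, pick $g \in G$ with $g \cdot \alpha = \beta$; then multiplication by $g$ maps the fiber $c^{-1}(\alpha) \cap \mathcal{O}$ into $c^{-1}(\beta) \cap \mathcal{O}$, and $g^{-1}$ furnishes an inverse. This shows that all fibers of $c|_\mathcal{O}$ have a common cardinality, which I call $m_\mathcal{O}$. Consequently,
\[
\Psi_{\vec{d}}(x ; \mathcal{O}) \;=\; \prod_{\alpha \in c(\mathcal{O})} (x - \alpha)^{m_\mathcal{O}}.
\]

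Next, I would define $\Psiirred_{\vec{d}}(x; \mathcal{O}) := \prod_{\alpha \in c(\mathcal{O})} (x - \alpha)$. Because $c(\mathcal{O})$ is a complete $G$-orbit and $G$ is identified with $\Gal(\Q(\zeta_N)/\Q)$, the set $c(\mathcal{O})$ coincides with the full set of Galois conjugates of any one of its elements. Hence $\Psiirred_{\vec{d}}(x; \mathcal{O})$ is the minimal polynomial over $\Q$ of any $\alpha \in c(\mathcal{O})$, and in particular it is irreducible. Since its coefficients are algebraic integers lying in $\Q$, they lie in $\Z$, so $\Psiirred_{\vec{d}}(x; \mathcal{O}) \in \Z[x]$. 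Combining the two displays yields the claimed factorization $\Psi_{\vec{d}}(x; \mathcal{O}) = \Psiirred_{\vec{d}}(x; \mathcal{O})^{m_\mathcal{O}}$.

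The main obstacle is essentially bookkeeping rather than a deep difficulty: one must verify carefully that $c$ is truly $G$-equivariant (so that the image of a single orbit is a single Galois orbit, not a union), which follows once one writes $2\cos(2\pi j_i/d_i) = \zeta_{d_i}^{j_i} + \zeta_{d_i}^{-j_i}$ and traces the action $\sigma_a$ through the common embedding $\zeta_{d_i} = \zeta_N^{N/d_i}$. Once that equivariance is in place, everything else falls out of the standard correspondence between Galois orbits and irreducible factors.
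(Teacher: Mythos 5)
Your proof is correct and follows the same approach as the paper: both exploit the $G$-equivariance of $c$ under the identification $G = (\Z/N\Z)^\times \cong \Gal(\Q(\zeta_N)/\Q)$ to transport one fiber of $c|_{\mathcal{O}}$ onto another, and then read off $\Psiirred_{\vec{d}}(x;\mathcal{O})$ as the minimal polynomial of any element of $c(\mathcal{O})$. The only difference is that you establish equal fiber cardinality directly via the inverse pair $g$, $g^{-1}$, whereas the paper picks a fiber of maximal cardinality and shows it injects into every other fiber; this is a cosmetic variation on the same idea.
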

\begin{proof}
Take $(j_{10},\ldots,j_{q0})\in \mathcal{O}$ so that
the cardinality of the fiber of the map (\ref{equation:injective}) is maximum.
Denote $y_0 := c(j_{10},\ldots,j_{q0}) \in \R$.
For any $y \in c(\mathcal{O})$,
there exists a $g \in (\Z/N\Z)^\times$ such that
$y=c(gj_{10},\ldots,gj_{q0})$ since $\mathcal{O}$ is an orbit.
We see that the map
\[
c^{-1}(y_0) \ni (j_1,\ldots,j_q) \mapsto (gj_1,\ldots,gj_q) \in c^{-1}(y)
\]
is a well-defined injective map.
By the choice of maximality,
this map turns out to be bijective.
This proves the first assertion.
Letting $m_{\mathcal{O}}$ denote the common
 cardinality of the fiber and putting 
\[
\Psiirred_{\vec{d}}(x; \mathcal{O}) = \prod_{y \in c(\mathcal{O})} (x- y),
\]
we have the second assertion.
\end{proof}

\begin{ex}
In the case where $q=2$ and $\vec{d}=(5,5)$, we have two
 orbits. One is `diagonal' and the other is `off-diagonal':  
\begin{align*}
\mathcal{O}_1 = \{ (1,1), (2,2) \}, \quad
\mathcal{O}_2 = \{ (1,2), (2,1) \}.
\end{align*}
Note that $c$ in \eqref{equation:injective} maps the orbits
as follows: 
\[
\mathcal{O}_1 \mapsto \{4 \cos \frac{2 \pi}{5}, 4 \cos \frac{4 \pi}{5}\}, \quad 
\mathcal{O}_2 \mapsto \{-1,-1\}. 
\]
The latter is not injective, while the former is. 
We see that $\Psi_{\vec{d}}(x; \mathcal{O}_1)=x^2+2x-4$ is irreducible, 
and $\Psi_{\vec{d}}(x; \mathcal{O}_2)=(x+1)^2$ is reducible, 
 that is, $\Psiirred_{\vec{d}}(x; \mathcal{O}_2)=x+1$,
$m_{\mathcal{O}_1}=1$ and $m_{\mathcal{O}_2}=2$. 
\end{ex}

In what follows, we focus on the case $q=2$, 
in which we know more about the reducibility of $\Psi_{\vec{d}}(x ; \cO)$. 
\begin{prop}\label{prop:linear}
Suppose $q=2$.
If the degree of $\Psiirred_{d_1, d_2}(x; \mathcal{O})$ is one,
the possibilities for $d_1, d_2$ and the orbits $\cO$ are as
 follows: 
\[
\begin{array}{|c|c|c|c|}
\hline
(d_1,d_2) & \cO & \Psiirred_{d_1,d_2}(x; \cO) & \mbox{condition} \\
\hline \hline
(d_1,d_2) & \cO_{1,1} & x-\lambda & d_1,d_2=1,2,3,4,6 \\
(m,m) & \cO_{1,m/2-1} & x & 4|m \\
(m,2m) & \cO_{1,m-2} & x & m:\mbox{odd}  \\
(5,5) & \cO_{1,2} & x+1 & \\
(10,10) & \cO_{1,3} & x-1 & \\
\hline
\end{array}
\]
In this table, $\lambda = 2\cos \frac{2\pi}{d_1} + 2 \cos
 \frac{2\pi}{d_2} \in \Z$ and 
$\cO_{1,a}$ is the orbit containing $(1,a) \in J_{d_1}
 \times J_{d_2}$. More explicitly, 
\begin{align*}
& \cO_{1,1} = J_{d_1} \times J_{d_2}, \quad  
\cO_{1,m/2-1} = \{ (j, m/2-j) \mid j \in J_m\}, \\  
& \cO_{1,m-2} = \{ (j, m-2j) \mid j \in J_m \}, \\  
& \cO_{1,2}, \cO_{1,3}= \{ (j_1,j_2) \in J_{d_1} \times
      J_{d_2} \mid j_1 \neq j_2 \}. 
\end{align*}
\end{prop}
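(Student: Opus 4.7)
The plan is to reduce the statement to a question about when the map $c$ is constant on an orbit, and then classify such orbits. By \lref{lem:fiber}, $\deg \Psiirred_{d_1,d_2}(x;\cO)$ equals the number of distinct values taken by $c$ on $\cO$, so it equals $1$ precisely when $c\equiv \alpha$ for some constant $\alpha$. Since $\Psi_{d_1,d_2}(x;\cO)=(x-\alpha)^{|\cO|}\in\Z[x]$ by \pref{proposition:injective}(1), we automatically have $\alpha\in\Z$. The remaining task is therefore to enumerate all triples $(d_1,d_2,\cO)$ with $c\equiv\alpha\in\Z$ on $\cO$.

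I would first dispose of the trivial case $d_1,d_2\in\{1,2,3,4,6\}$: here $\tilde{\phi}(d_i)=1$, so $J_{d_1}\times J_{d_2}=\{(1,1)\}=\cO_{1,1}$ is a single orbit, $c$ is automatically constant, and its value is $\lambda=2\cos(2\pi/d_1)+2\cos(2\pi/d_2)\in\Z$ (by direct inspection, or by Niven's theorem on cosines of rational multiples of $\pi$). This yields the first row of the table.

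For the remaining rows, write $\zeta_d=\exp(2\pi\sqrt{-1}/d)$. The condition $c(j_1,j_2)=\alpha$ becomes
\[
\zeta_{d_1}^{j_1}+\zeta_{d_1}^{-j_1}+\zeta_{d_2}^{j_2}+\zeta_{d_2}^{-j_2}=\alpha,
\]
and this identity must persist on every Galois conjugate $(gj_1,gj_2)$ of the orbit. When $\alpha=0$, the relation $\cos(2\pi j_1/d_1)=-\cos(2\pi j_2/d_2)$ forces $j_1/d_1\pm j_2/d_2\equiv 1/2\pmod 1$. A direct analysis of how this parity identity propagates across all Galois translates forces either $d_1=d_2=m$ with $4\mid m$, giving $\cO_{1,m/2-1}=\{(j,m/2-j):j\in J_m\}$, or $(d_1,d_2)=(m,2m)$ with $m$ odd, giving $\cO_{1,m-2}=\{(j,m-2j):j\in J_m\}$. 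In each such case the explicit description of the orbit and the value $\alpha=0$ is a short check.

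The main obstacle is the case $\alpha\neq 0$. Here the displayed identity is a nontrivial rational-valued sum of (at most) four roots of unity. I would invoke the Conway--Jones classification of minimal vanishing sums of roots of unity (applied after subtracting the constant $\alpha$), or, equivalently, take the trace from $\Q(\zeta_N)$ to $\Q$ and exploit the Galois transitivity on the orbit to pin down the possible conductors. The only configurations surviving this analysis, beyond the $\alpha=0$ cancellations treated above, are the pentagonal identities $2\cos(2\pi/5)+2\cos(4\pi/5)=-1$ and $2\cos(\pi/5)+2\cos(3\pi/5)=1$, corresponding respectively to the orbit $\cO_{1,2}$ in $(5,5)$ and $\cO_{1,3}$ in $(10,10)$. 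Ruling out every other cyclotomic configuration on four summands is the crux of the proof; once completed, the remaining work is routine bookkeeping to verify the explicit orbit descriptions listed in the table.
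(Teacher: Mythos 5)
Your reduction is exactly right: by Lemma~\ref{lem:fiber} the degree of $\Psiirred_{d_1,d_2}(x;\cO)$ is one precisely when $c$ is constant on $\cO$, with an automatically integer value, and the $d_1,d_2\in\{1,2,3,4,6\}$ row is the trivial single-orbit case. You have also correctly separated the classification into the $\alpha=0$ cancellation case and the $\alpha\neq0$ case, and your suggestion of a trace argument parallels the mechanism the paper actually uses (summing the identity $c(j_1,j_2)=\lambda$ over the orbit and evaluating the two Ramanujan-type sums $2\sum_{j\in J_d}\cos(2\pi j/d)=\mu(d)$, giving $\tilde\phi(d_2)\lambda=\mu(d_1)+\mu(d_2)$). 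So the skeleton is sound and your alternative tool (Conway--Jones) could in principle do the same job.

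The gap is that the two steps you yourself flag as ``the crux'' and ``a direct analysis'' are precisely where all the mathematical content lives, and they are not carried out. For $\alpha\neq0$: invoking Conway--Jones requires actually turning the constancy condition on a full Galois orbit into a specific length-$(\le 5)$ vanishing sum of roots of unity, matching it against their list, and back-translating each admissible configuration into constraints on $(d_1,d_2,\cO)$; none of that enumeration appears. For $\alpha=0$: the claim that ``the parity identity $j_1/d_1+j_2/d_2=1/2$ propagating across all Galois translates forces $d_1=d_2=m$ with $4\mid m$, or $(d_1,d_2)=(m,2m)$ with $m$ odd'' is a nontrivial assertion that you assert but do not prove. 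In the paper this is not derived from the parity constraint alone; it is extracted from a separate preliminary lemma (Lemma~\ref{lem:necessary}) that pins down the shape of $(d_1,d_2)$ first by elementary congruence arguments and the Chinese Remainder Theorem, after which the $\alpha=0$ analysis only has to handle the two families $(g,g)$ and $(g,2g)$. Without that preliminary reduction, you would have to show directly that the Galois-stable parity constraint rules out all other pairs $(d_1,d_2)$, and your sketch does not indicate how. As written, the proposal reproduces the easy bookkeeping but outsources the entire classification to unstated computations.
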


By \lref{lem:fiber}, if the degree of $\Psiirred_{d_1, d_2}(x; \mathcal{O})$
is one, then the image of the map $c : \cO \to \Q$ is a
singleton. We now derive some necessary conditions for this 
to be the case. 
\begin{lem}\label{lem:necessary} 
Let $g=\gcd(d_1,d_2)$ 
and define $g_i$ to be the product of all factors of $d_i$
 in common with $g$. Put $m_i := d_i / g_i \in \Z$. 
If the degree of $\Psiirred_{d_1, d_2}(x; \mathcal{O})$
is one, or equivalently, if the image of the map $c : \cO \to
 \Q$ is a singleton, then the following conditions hold:  
\begin{enumerate}
 \item[(i)] $g_i \le 2$ or $m_i \le 2$ for $i=1,2$. 
 \item[(ii)] $m_1, m_2 \in \{1,2,3,4,6\}$. 
 \item[(iii)] $g_1=g_2$ or $(g_1,g_2) = (2,4)$ or $(4,2)$. 
\end{enumerate}
\end{lem}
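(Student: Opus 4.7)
The plan rests on a single strong consequence of the hypothesis. If $c(\mathcal{O}) = \{\lambda\}$, then $\lambda \in \Q$ (being fixed by the Galois action on $\mathcal{O}$, as in the proof of \pref{proposition:injective}), and for any fixed $(j_1,j_2)\in\mathcal{O}$ we have
\[
\cos(2\pi j_1/d_1) \;=\; \lambda - \cos(2\pi j_2/d_2).
\]
Since $\gcd(j_i,d_i)=1$, each $\cos(2\pi j_i/d_i)$ generates the real cyclotomic field $\Q(\cos(2\pi/d_i))$ (the unique index-$2$ subfield of $\Q(\zeta_{d_i})$). The displayed identity thus yields $\Q(\cos(2\pi/d_1))\subseteq\Q(\cos(2\pi/d_2))$, and by symmetry equality. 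This common real field lies in $\Q(\zeta_{d_1})\cap\Q(\zeta_{d_2})=\Q(\zeta_g)$, hence in $\Q(\cos(2\pi/g))$; the reverse inclusion is automatic from $g\mid d_i$. Therefore
\[
\Q(\cos(2\pi/d_i)) \;=\; \Q(\cos(2\pi/g)) \qquad (i=1,2),
\]
which is equivalent to $\tilde\phi(d_i) = \tilde\phi(g)$.

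I would then split on $g$. When $g\ge 3$ (so $d_i\ge g\ge 3$), the identity becomes $\phi(d_i)=\phi(g)$. Write the cofactor $d_i/g = A_iB_i$ with $A_i$ collecting the prime powers at primes dividing $g$ and $B_i$ at the remaining primes; a direct totient computation gives $\phi(d_i)/\phi(g) = A_i\cdot\phi(B_i)$. Equality forces $A_i=1$ and $\phi(B_i)=1$, i.e.\ $g_i=g$ (so in particular $g_1=g_2\ge 3$) and $m_i = B_i \in\{1,2\}$. Conditions (i), (ii), (iii) are then immediate. When $g\in\{1,2\}$, the identity reduces to $\tilde\phi(d_i)=1$, placing $d_i$ in the Niven set $\{1,2,3,4,6\}$. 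Since $g$ and every $g_i$ has only the prime $2$ as a possible factor, inspection of the $2$-adic valuations of $d_i\in\{1,2,3,4,6\}$ gives $g_i\in\{1,2,4\}$. Combined with $g=\gcd(g_1,g_2)\in\{1,2\}$, the pair $(g_1,g_2)$ must be one of $(1,1),(2,2),(2,4),(4,2)$, proving (iii); condition (i) holds because $g_i\le 2$ except when $g_i=4$, in which case $d_i=4$ forces $m_i=1\le 2$; and (ii) follows from $m_i=d_i/g_i\in\{1,2,3\}\subset\{1,2,3,4,6\}$.

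The main obstacle is purely bookkeeping in the boundary cases $d_i\in\{1,2\}$ or $g\in\{1,2\}$, where $\tilde\phi$ is defined piecewise and the real cyclotomic field collapses to $\Q$; in those cases one must use $\Q(\zeta_g)\cap\R=\Q$ instead of $\Q(\cos(2\pi/g))$, but the inclusions of the first paragraph still go through and the classification in the second case handles them uniformly.
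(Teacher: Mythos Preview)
Your argument is correct and takes a genuinely different route from the paper.  The paper never isolates the field identity $\Q(\cos(2\pi/d_1))=\Q(\cos(2\pi/d_2))=\Q(\cos(2\pi/g))$; instead it works element-by-element in the Galois group, choosing three specific $b\in(\Z/N'\Z)^\times$ via the Chinese Remainder Theorem (e.g.\ $b\equiv 1\bmod m_1\tilde g$, $b\equiv -1\bmod m_2$) and reading off (i), (ii), (iii) from the forced congruence $b\equiv\pm1\bmod d_2$.  Your approach compresses all of this into the single degree equality $\tilde\phi(d_i)=\tilde\phi(g)$, after which a short multiplicative computation with $\phi$ finishes.  In the case $g\ge 3$ your method in fact yields the sharper conclusion $g_1=g_2=g$ and $m_i\in\{1,2\}$ directly, which is precisely what the paper reassembles from (i)--(iii) in the subsequent proof of \pref{prop:linear}.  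The paper's method has the minor advantage of being entirely elementary (no appeal to $\Q(\zeta_{d_1})\cap\Q(\zeta_{d_2})=\Q(\zeta_g)$), but yours is cleaner and more conceptual.
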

\begin{proof}
Let $N'=\lcm(d_1,d_2)$. 
Suppose there exist $d_1,d_2 \in \N$ and $(j_1,j_2) \in J_{d_1} 
 \times J_{d_2}$ such that 
\[
2\cos \frac{2\pi j_1 b}{d_1} + 2 \cos \frac{2\pi j_2 b}{d_2} =
 \lambda\in \Q \quad \text{for $\forall b \in \Z/N'\Z$}. 
\]
We claim that every $b \in \Z$ with $b \equiv 1 \mod d_1$
and $\gcd(b,d_2)=1$ satisfies $b \equiv \pm 1 \mod d_2$. 
Indeed, 
since $\cos \frac{2\pi j_1 b}{d_1} = \cos \frac{2\pi j_1}{d_1}$,
we have $\cos \frac{2\pi j_2 b}{d_2}= \cos \frac{2\pi j_2
 }{d_2}$, which implies that $b \equiv \pm 1 \mod d_2$. 

Now we use the claim above for three cases regarding the
 value of $b$. 
Note that $d_i = m_i g_i$, and if $\tilde{g}=\lcm(g_1,g_2)$, then
$\tilde{g}, m_1$ and $m_2$ are mutually prime. 

First, consider $b \in \Z$ with
$b \equiv 1 \mod m_1 \tilde{g}$
and $b \equiv -1 \mod m_2$.
By the claim, we have $b \equiv \pm 1 \mod d_2 = g_2 m_2$.
If $b \equiv 1$, then we have $m_2 \le 2$, 
while if $b \equiv -1$, then we have $g_2 \le 2$. 
We conclude that either $m_2 \le 2$ or $g_2 \le 2$.

Second, consider $b \in \Z$ with $b \equiv 1 \mod m_1 \tilde{g}$ and $\gcd(b,m_2) =1$. 
By the claim, we have $b \equiv \pm1 \mod m_2$. 
This shows that $\varphi(m_2) \le 2$, 
and we conclude that $m_2 = 1,2,3,4$ or $6$.

Third, consider $b \in \Z$ with
$b\equiv 1 \mod m_1 m_2$
and $b\equiv 1+g_1 \mod g_2$.
By the claim, we have $b \equiv \pm 1 \mod g_2$.
If $b \equiv 1$, then $g_2 | g_1$, while 
if $b \equiv -1$, then $g_2 | (g_1+2)$.
Since the conditions on $d_1$ and $d_2$ are symmetric,
we also have $g_1 | g_2$ or $g_1 | (g_2+2)$.
We conclude that either $g_1= g_2$ or 
$\{ g_1, g_2 \} = \{ 2,4\}$.
\end{proof}

Now we use \lref{lem:necessary} (i), (ii) and (iii) to prove \pref{prop:linear}. 

\begin{proof}[Proof of \pref{prop:linear}]

First, we note that $\Psi_{d_1,d_2}(x) =
\Psi_{d_2}(x-\alpha)$ with $\alpha = 2\cos \frac{2\pi}{d_1}
\in \Z$ for $d_1 \in \{1,2,3,4,6\}$ from the remark in 
\eref{ex:table}. 
Since $\Psi_d(x)$ is irreducible for any $d \in \N$ from
 \lref{lem:Psid}, 
so is $\Psi_{\vec{d}}(x)$ in this case. 
The degree of $\Psiirred(x ; \cO)$ is one if and only if 
that of $\Psi_{d_2}(x)$ is one, which is true if and only if
 $d_2 \in \{1,2,3,4,6\}$. 
This gives the first line in the table. 
Indeed, in all of the cases above, we have only one orbit
 $\cO_{1,1}$ on $J_{d_1} \times J_{d_2}$. 

In what follows, we can assume that $d_1, d_2 \not\in
 \{1,2,3,4,6\}$, and hence $\tilde{\phi}(d_1), 
 \tilde{\phi}(d_2) \ge 2$.  

In the case $g=1$ with $g_1=g_2=g$ in (iii), we have $m_i = d_i \in
 \{1,2,3,4,6\}$ by (ii). 
In the case $g_i = 2$ in (iii) for $i=1$ or $2$, $m_i$ should be odd, and hence 
choices are only $m_i =1,3$ by (ii). 
This implies $d_i = 2$ or $6$. 
Both of these cases have already been excluded. 

If $g\ge 3$ with $g_1=g_2=g$, then $m_i \le 2$.
The only choices in this case are $(m_1,m_2) =(1,1), (1,2), (2,1)$.
Furthermore, if $m_i=2$, then $g$ must be odd.
It follows that $(d_1,d_2) = (g,2g)$ with odd $g\ge 5$ or
 $(d_1,d_2)=(g,g)$ for arbitrary $g\ge 5$ (with $g=6$ excluded).

To this point, we have not used the orbit structure. 
Now we consider the choice of $(j_1, j_2) \in \cO$.
Taking the sum of all the terms under the Galois action of
 $(\Z/d_2 \Z)^\times$, and using the same argument as in the
 proof of  \cref{lem:inj2irr},  
we have
\[
\tilde\varphi(d_2) \lambda 
= 2 \sum_{j_1 \in J_{d_1}} \cos \frac{2\pi j_1}{d_1} + 2 \sum_{j_2 \in J_{d_2}} \cos \frac{2\pi j_2}{d_2}
= \mu(d_1) + \mu(d_2)
\]
for $d_1, d_2 \ge 3$, where $\mu$ is the M\"obius function.
Since $\lambda$ is an algebraic integer,
we see that $\Z \ni \lambda = \frac{\mu(d_1) + \mu(d_2)}{\tilde\varphi(d_2)}$.
This shows either $\lambda=0$ or $\tilde\varphi(d_2) \le 2$.

If $\la \not=0$, then $\tilde{\phi}(d_2) =2$, i.e., 
$d_2 \in \{5,8,10,12\}$. In this case, we have  
$\lambda=\mu(d_1)=\mu(d_2)=\pm 1$, and thus $d_2$ is not a multiple of $4$
and $(d_1,d_2) \neq (g,2g)$.
From this we obtain $d_1=d_2 \in \{5, 10\}$ and the last two lines in the table. 

Now we consider the case $\lambda=0$.
The condition $2\cos \frac{2\pi j_1}{d_1} + 2 \cos \frac{2\pi j_2}{d_2}=0$
implies $j_1/d_1 + j_2/d_2 = 1/2$ since $d_1,d_2 \ge 3$. 
If $d_1=d_2$, then $j_1+j_2=d_1/2$.
It follows that $d_1$ is even, and hence that $j_1$ and $j_2$ are odd.
This implies that $d_1/2$ is even, and therefore $4|d_1$.
For $j_1=1$, we have $j_2=d_1/2-1 \in J_{d_2}$.
This gives the second line in the table.
If $2d_1=d_2$, then $2j_1+j_2=d_1$.
For $j_1=1$, we have $j_2=d_1-2 \in J_{d_2}$.
This gives the third line in the table. 
\end{proof}

Now we give three examples for $q=2$: (I) $(d_1,d_2)
= (m,2m)$; (II) $(d_1, d_2)=(m,m)$; (III)
$\tilde\varphi(d_2)=2$. 
From these examples, we obtain three \textit{observations} 
with the aid of numerical computations.

Before studying each case, we give an elementary remark
on the representatives for an orbit decomposition. 
\begin{lem}\label{lem:representative_product}
Suppose that $G$ acts on $X$ simply transitively.
Fix an arbitrary $x_0 \in X$. 
Then, for any $Y$ (with $G$ action),
the set $\{ x_0 \} \times Y$ consists of complete representatives of
the action of $G$ on $X \times Y$.
\end{lem}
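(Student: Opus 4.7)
The plan is to unpack what it means for $\{x_0\} \times Y$ to be a set of complete representatives for the diagonal $G$-action on $X \times Y$: I need to check two things, namely (a) every $G$-orbit on $X \times Y$ meets $\{x_0\} \times Y$, and (b) no two distinct elements of $\{x_0\} \times Y$ lie in the same orbit. Each will use exactly one half of the ``simply transitive'' hypothesis.

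For (a), given an arbitrary $(x,y) \in X \times Y$, I would use transitivity of the $G$-action on $X$ to produce some $g \in G$ with $g \cdot x = x_0$; then under the diagonal action, $g \cdot (x,y) = (x_0, g \cdot y) \in \{x_0\} \times Y$. So the orbit of $(x,y)$ does meet $\{x_0\} \times Y$.

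For (b), suppose $(x_0, y_1)$ and $(x_0, y_2)$ lie in the same orbit, i.e., there is $g \in G$ with $g \cdot (x_0, y_1) = (x_0, y_2)$. Looking at the first coordinate, $g \cdot x_0 = x_0$; since $G$ acts freely on $X$, this forces $g = e$, and then the second coordinate gives $y_1 = y_2$.

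There is no real obstacle here: the result is essentially a reformulation of the definition of a simply transitive action, applied coordinatewise. The only thing to be careful about is to make the diagonal nature of the action on $X \times Y$ explicit, and to note that transitivity is used for surjectivity of the map from $\{x_0\}\times Y$ onto the orbit set, while freeness is used for injectivity.
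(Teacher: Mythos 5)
Your proof is correct and follows essentially the same two-step argument as the paper: transitivity shows every orbit meets $\{x_0\}\times Y$, and freeness (via the first coordinate forcing $g=e$) shows distinct elements of $\{x_0\}\times Y$ lie in distinct orbits. No substantive differences.
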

\begin{proof}
For any $(x, y) \in X \times Y$,
there exists a $g \in G$ such that $gx=x_0$.
Then $g(x,y) = (x_0, gy)$.
If $(x_0,y)$ and $(x_0,y')$ belong to the same
$G$-orbit, then $y=y'$, because
$g(x_0,y)=(x_0,y')$ implies $g=e$, 
the identity element of the group $G$.
\end{proof}

\noindent
(I) For $q=2$, $(d_1,d_2)=(m,2m)$. 

We start with the orbit decomposition.
\begin{lem}
The number of orbits is $\orb((m,2m)) = \tilde{\phi}(m)$, 
 and each orbit has $\tilde{\phi}(2m)$ elements. 
The orbit decomposition is given by 
\[
 J_m \times J_{2m} 
= \begin{cases}
\sqcup_{a \in J_m} \cO_{1,a} & \text{for even $m$}, \\
\sqcup_{a \in J_{2m}} \cO_{1,a} & \text{for odd $m$}. 
  \end{cases}
\]
\begin{proof}
The first part follows from \eref{ex:numberoforbits} (i) and 
the second part follows from
 \lref{lem:representative_product}.
If $m \ge3$ is odd, 
then $\tilde{\phi}(2m) = \tilde{\phi}(m)$
and the natural map induces the group isomorphism
$(\Z/2m\Z)^\times / \{ \pm 1\}
\overset{\sim}{\rightarrow} 
(\Z/m\Z)^\times / \{ \pm 1\}$,
so the action of $(\Z/2m\Z)^\times / \{ \pm 1\}$
on $J_m$ is simply transitive. 
If $m$ is even, it is easlily seen that 
$(1,i) \in \cO$ if and only if $(1,m-i) \in \cO$ for $i \in
 J_{2m}$, from which we have the decomposition. 
\end{proof}
\end{lem}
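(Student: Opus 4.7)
The plan is to obtain the orbit count and common orbit size from the general results already established in Section~\ref{sec:polynomials}, and then split on the parity of $m$ to identify the explicit decomposition.

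The count is immediate from Example~\ref{ex:numberoforbits}(i): since $\gcd(m,2m)=m$, one has $\orb((m,2m))=\tilde{\phi}(m)$. For the common orbit size, I would invoke \lref{lem:freeaction} with $N'=\lcm(m,2m)=2m$ and $G=(\Z/2m\Z)^\times$; because $\gcd(m,2m)=m\ge 3$, the graph $\Gamma(V)$ on $V=\{1,2\}$ has a single connected component, so the subgroup $H$ has order $2$ (consisting of $\pm 1\bmod 2m$) and the induced $G/H$-action is free. Hence every orbit has exactly $|G/H|=\phi(2m)/2=\tilde{\phi}(2m)$ elements.

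For odd $m\ge 3$, the Chinese Remainder Theorem gives $(\Z/2m\Z)^\times\simeq (\Z/m\Z)^\times$, and this descends to an isomorphism $G/\{\pm 1\}\overset{\sim}{\to}(\Z/m\Z)^\times/\{\pm 1\}$. Consequently $G/\{\pm 1\}$ acts simply transitively on $J_m$, so \lref{lem:representative_product} applied with $X=J_m$, $Y=J_{2m}$, and $x_0=1$ immediately produces the decomposition $J_m\times J_{2m}=\bigsqcup_{a\in J_{2m}}\cO_{1,a}$.

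The even case is the main obstacle, because $G/\{\pm 1\}$ has order $\tilde{\phi}(2m)=2\tilde{\phi}(m)$ and therefore does not act freely on $J_m$, so \lref{lem:representative_product} cannot be used directly. The key step is to locate the additional collapsing relation: the element $g=1+m$ is a unit in $\Z/2m\Z$ (odd and coprime to $m$), satisfies $g\equiv 1\pmod m$, and sends any $a\in J_{2m}$ (necessarily odd) to $a+m\pmod{2m}$, whose $\{\pm 1\}$-class in $J_{2m}$ is $m-a$. Combined with the surjectivity of the reduction $(\Z/2m\Z)^\times\twoheadrightarrow(\Z/m\Z)^\times$ (which makes $G$ act transitively on $J_m$), this shows that every orbit contains two elements of the form $(1,a')$ with $a'\in J_{2m}$, paired as $a'$ and $m-a'$; choosing the representative lying in $J_m\subset J_{2m}$ then yields the decomposition $J_m\times J_{2m}=\bigsqcup_{a\in J_m}\cO_{1,a}$.
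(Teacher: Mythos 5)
Your proposal is correct and follows essentially the same route as the paper: orbit count from Example~\ref{ex:numberoforbits}(i), common orbit size via \lref{lem:freeaction}, the odd case via \lref{lem:representative_product} after passing to the isomorphism $(\Z/2m\Z)^\times/\{\pm 1\}\cong(\Z/m\Z)^\times/\{\pm 1\}$, and for even $m$ the pairing $(1,a)\sim(1,m-a)$. The only difference is that you supply the explicit group element $g=1+m$ realizing this pairing, which the paper asserts without proof.
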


\begin{obs}
(i) $\Psi_{m,2m}(x ; \cO)$ seems to be irreducible
except for odd $m$ with $\cO=\cO_{1,m-2}$ appearing in \pref{prop:linear}. \\
(ii) If $\Psi_{m,2m}(x ; \cO) = \Psi_{m,2m}(x; \cO')$,
then $\cO=\cO'$.
\end{obs}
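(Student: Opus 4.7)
The plan is to use \pref{proposition:injective}(2) together with \lref{lem:fiber} to convert (i) into the statement that the map $c$ from \eqref{equation:injective} is injective on every orbit other than the stated exception, and to convert (ii) into the statement that distinct orbits produce distinct multisets of $c$-values. The exceptional case is easy to record up front: for odd $m$ and $(j,m-2j)\in\cO_{1,m-2}$, the identity
\[
\cos\!\Big(\pi-\tfrac{2\pi j}{m}\Big)=-\cos\tfrac{2\pi j}{m}
\]
yields $c(j,m-2j)=0$ identically, so $\Psi_{m,2m}(x;\cO_{1,m-2})=x^{\tilde{\phi}(2m)}$, which both accounts for the exception in (i) and is the unique polynomial in the family that is a pure power of $x$.

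For the non-exceptional orbits in (i), I would assume $c(j_1,j_2)=c(j_1',j_2')$ with $(j_1',j_2')=g\cdot(j_1,j_2)$ for some $g\in(\Z/N\Z)^\times$ and split according to the parity of $m$. When $m$ is odd one has $\Q(\zeta_m)=\Q(\zeta_{2m})$, so applying the Galois group of $\Q(\zeta_{2m})/\Q$ and summing over conjugates in the spirit of \cref{lem:inj2irr} produces a rational identity; combined with the classification of short vanishing sums of roots of unity (Conway--Jones, Lam--Leung), this should force either $(j_1,j_2)=(j_1',j_2')$ or the exceptional relation $j_2\equiv m-2j_1\pmod{2m}$. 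When $m$ is even, $[\Q(\zeta_{2m}):\Q(\zeta_m)]=2$; separating the equation into its $\Q(\zeta_m)$-component and its transverse component yields two stronger constraints that should rule out any collision.

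For (ii), suppose $\Psi_{m,2m}(x;\cO)=\Psi_{m,2m}(x;\cO')$. If the common polynomial is $x^k$ then by the first paragraph both orbits equal $\cO_{1,m-2}$. Otherwise, pick $(1,a)\in\cO$ and locate a preimage of $c(1,a)$ in $\cO'$; invoking the injectivity from (i) on the orbit of this preimage then gives $(1,a)\in\cO'$, so $\cO=\cO'$.

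The main obstacle is the classification step inside (i): ruling out all non-trivial additive relations of the form $\cos\tfrac{2\pi j_1}{m}+\cos\tfrac{2\pi j_2}{2m}=\cos\tfrac{2\pi j_1'}{m}+\cos\tfrac{2\pi j_2'}{2m}$ subject to the orbit constraint. Because the family $\cO_{1,m-2}$ genuinely produces vanishing sums, a sharp Conway--Jones--type enumeration of minimal vanishing sums of $2m$-th roots of unity is required, and small $m$ where sporadic collisions can appear will likely demand a finite case analysis mirroring the proof of \pref{prop:linear}.
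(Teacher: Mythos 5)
Note first that this statement is an \emph{Observation}, not a theorem: the paper explicitly says the three observations in this section are obtained ``with the aid of numerical computations,'' and the word ``seems'' appears in part~(i) itself. The paper offers no proof, so there is nothing to compare against; any complete argument you supply would go strictly beyond what the authors claim.

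As a proof strategy, your proposal has one solid piece and two real gaps. The solid piece is the exceptional case: the identity $c(j,m-2j)=0$ is correct, so $\Psi_{m,2m}(x;\cO_{1,m-2})=x^{\tilde{\phi}(2m)}$, consistent with the $(m,2m)$ row of \pref{prop:linear}. The first gap is in part~(i): you correctly identify that the step requires a classification of vanishing sums of $2m$-th roots of unity, but you do not carry it out. Since $\gcd(m,2m)=m\ge 3$, the field-disjointness argument of \cref{lem:inj2irr} (which rests on $\Q(\zeta_{d_1})\cap\Q(\zeta_{d_2})=\Q$) is unavailable here, and it is far from clear that a Conway--Jones/Lam--Leung enumeration reduces the matter to a finite check that someone could actually do; the authors' decision to leave this as a numerically supported observation suggests it does not. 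The second gap, in part~(ii), is logical: injectivity of $c$ on each individual orbit, which is all that (i) would give you, does not imply that $c(\cO)=c(\cO')$ forces $\cO=\cO'$. After locating $(j_1',j_2')\in\cO'$ with $c(j_1',j_2')=c(1,a)$, the fact that $c$ is injective \emph{on $\cO'$} says nothing about the relationship between $(j_1',j_2')\in\cO'$ and $(1,a)\in\cO$; they could be distinct points with the same $c$-value. To close this you would need $c$ to be injective on $\cO\cup\cO'$ for every pair of non-exceptional orbits, i.e.\ essentially global injectivity, which is a strictly stronger assertion than (i) and would require its own (currently missing) argument.
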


\noindent
(II) For $q=2$, $(d_1,d_2)=(m,m)$. 

We define the involution $\iota$ on $J_m \times J_m$ by
$\iota(x,y) = (y,x)$. It is obvious from this definition that 
$\Psi_{\mm}(x; \iota(\cO))= \Psi_{\mm}(x; \cO)$.
\begin{lem}\label{lem:mmcase} 
(i) The number of orbits is $\orb((m,m)) = \tilde{\phi}(m)$
 and each orbit has $\tilde{\phi}(m)$ elements.
The orbit decomposition is given by 
\[
 J_m \times J_m 
= \bigsqcup_{a \in J_m} \cO_{1,a}.  
\]
(ii) The diagonal orbit $\cO_{1,1} = \{(i,i) : i \in J_m\}$ is invariant under
 $\iota$. If $\tilde\varphi(m)$ is odd, then there is no non-diagonal $\iota$-invariant orbit.
\end{lem}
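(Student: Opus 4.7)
The plan is to analyze the $(\Z/m\Z)^{\times}/\{\pm 1\}$-action on $J_m \times J_m$ explicitly, since both assertions really concern this action.

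For part (i), I would first invoke \eref{ex:numberoforbits}(ii) with $q=2$ to obtain $\orb((m,m)) = \tilde\phi(m)$. For $m \ge 3$ the graph $\Gamma(\{1,2\})$ is connected, its single edge arising from $\gcd(d_1,d_2) = m \ge 3$, so $|H| = 2$ in \lref{lem:freeaction} and the quotient group $G/H \cong (\Z/m\Z)^{\times}/\{\pm 1\}$ acts freely; this forces every orbit to have cardinality $|G/H| = \tilde\phi(m)$. The cases $m \le 2$ are trivial since $|J_m|=1$. To identify the orbit decomposition with the indexing $\{\cO_{1,a}\}_{a \in J_m}$, I would show the $\cO_{1,a}$ are pairwise distinct: if $\cO_{1,a} = \cO_{1,a'}$, then some $g \in (\Z/m\Z)^{\times}$ satisfies $g \cdot (1,a) = (1,a')$ in $J_m \times J_m$; the first coordinate yields $g \equiv \pm 1 \mod m$, and the second then gives $a' = [\pm a] = a$ in $J_m$. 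Since the number $|J_m| = \tilde\phi(m)$ of these orbits matches $\orb((m,m))$, they exhaust $J_m \times J_m$.

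For part (ii), the diagonal orbit is manifestly $\iota$-invariant because $\iota$ fixes each $(i,i)$. For the converse assertion, I would exploit that $J_m$ inherits the abelian group structure of the quotient $(\Z/m\Z)^{\times}/\{\pm 1\}$. Since the $G$-action commutes with $\iota$, one has $\iota(\cO_{1,a}) = \cO_{a,1}$, and applying $[a]^{-1}$ coordinatewise to $(a,1)$ rewrites this as $\cO_{1,[a]^{-1}}$ in the indexing from (i). Hence $\cO_{1,a}$ is $\iota$-invariant precisely when $[a]^{-1} = [a]$ in $J_m$, i.e., when the class $[a]$ has order dividing $2$ in the group $J_m$, equivalently $a^2 \equiv \pm 1 \mod m$. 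When $\tilde\phi(m) = |J_m|$ is odd, Lagrange's theorem forces the only such element to be the identity $[1]$, so $a=1$ in $J_m$ and $\cO_{1,a}$ is the diagonal orbit.

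The arguments are elementary group-theoretic bookkeeping, and I do not anticipate any substantial obstacle. The only point that needs care is the consistent handling of the $\{\pm 1\}$-ambiguity inherent in the identification $J_m \cong (\Z/m\Z)^{\times}/\{\pm 1\}$: because this ambiguity can be absorbed independently in each coordinate of $J_m \times J_m$, the raw condition that $\iota$-stability imposes on $a$ is $a^2 \equiv \pm 1 \mod m$ rather than $a^2 \equiv 1 \mod m$, and one must recognize this as the order-dividing-two condition inside the finite abelian group of order $\tilde\phi(m)$ before Lagrange's theorem delivers the conclusion.
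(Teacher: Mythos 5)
Your proof is correct, and for part (ii) it takes a genuinely different route from the paper. For (i) your argument is essentially the paper's (the count from Example~\ref{ex:numberoforbits}(ii), the common orbit size from the free action in Lemma~\ref{lem:freeaction}); where the paper cites Lemma~\ref{lem:representative_product} to get the decomposition $J_m \times J_m = \bigsqcup_{a \in J_m} \cO_{1,a}$, you re-derive that lemma's content inline by showing the $\cO_{1,a}$ are pairwise distinct, which amounts to the same thing. For (ii) the paper argues by parity: a non-diagonal $\iota$-invariant orbit has no $\iota$-fixed point, so $\iota$ restricts to a fixed-point-free involution on it and forces even cardinality, contradicting that every orbit has the odd size $\tilde\phi(m)$. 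You instead translate $\iota$-invariance into a condition on the label: $\iota(\cO_{1,a}) = \cO_{1,[a]^{-1}}$, so $\cO_{1,a}$ is $\iota$-invariant precisely when $[a]^2 = [1]$ in the group $(\Z/m\Z)^\times/\{\pm 1\}$ (equivalently $a^2 \equiv \pm 1 \bmod m$), and Lagrange's theorem in a group of odd order $\tilde\phi(m)$ forces $[a]=[1]$. The two arguments are close cousins — both hinge on $\tilde\phi(m)$ being odd — but yours is more structural and yields as a by-product the exact characterization of which $a$ give $\iota$-invariant orbits, information the paper later needs and obtains separately (the subsequent lemma on $A(m)$ counts solutions of $x^2 \equiv \pm 1 \bmod m$). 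Your care with the $\pm 1$ ambiguity, so that the condition is $a^2 \equiv \pm 1$ rather than $a^2 \equiv 1$, is exactly the right point to flag.
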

\begin{proof}
(i) The first part follows from \eref{ex:numberoforbits}. 
The second part follows from
 \lref{lem:representative_product} by setting $x_0=1$ and $Y = J_m$. \\
(ii) Any non-diagonal $\iota$-invariant orbit
 has no fixed point under the action of $\iota$. 
In particular, the cardinality of such an orbit is even. 
The second assertions follow. 
\end{proof}
If $\tilde\varphi(m)$ is even, 
then the situation is different. 
Note that if $4|m$ with $m>4$, then $\tilde\varphi(m)$ is even. 
\begin{lem}
(i) Suppose $4|m$.
The orbit $\cO_{1,m/2-1}$ is $\iota$-invariant and 
$\Psi_{\mm}(x ;\cO_{1,m/2-1}) = x^{\tilde\varphi(m)}$. \\
(ii) Suppose $\tilde{\phi}(m)$ is even.
For a non-diagonal $\iota$-invariant orbit $\cO$ with $\cO \neq \cO_{1,m/2-1}$, there exists
       $\Psi_{\mm}^{\half}(x;\cO) 
       \in \Z[x]$ of degree $\tilde{\phi}(m)/2$  such that 
       $\Psi_{\mm}(x ;\cO) = \Psi_{\mm}^{\half}(x;\cO)^2$.
\end{lem}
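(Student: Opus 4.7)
The plan is to exploit the fact that the involution $\iota(j_1,j_2) = (j_2,j_1)$ commutes with the Galois action of $(\Z/m\Z)^\times$, since $a \cdot \iota(j_1,j_2) = (aj_2,aj_1) = \iota(a \cdot (j_1,j_2))$. This commutation is the key structural input for both parts.

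For part (i), I would start from the explicit description $\cO_{1,m/2-1} = \{(j, m/2-j) : j \in J_m\}$ recorded in \pref{prop:linear}. The $\iota$-invariance is immediate, since $\iota(j, m/2-j) = (m/2-j, j)$ is of the same form with $j' = m/2 - j$. For the polynomial, each element contributes the value
\[
c(j,m/2-j) = 2\cos \tfrac{2\pi j}{m} + 2\cos\!\bigl(\pi - \tfrac{2\pi j}{m}\bigr) = 0,
\]
so every root is $0$, and $\Psi_{\mm}(x; \cO_{1,m/2-1}) = x^{\tilde\varphi(m)}$ since $|\cO_{1,m/2-1}| = \tilde\varphi(m)$ by \lref{lem:mmcase}(i).

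For part (ii), I would first show that $\iota$ acts freely on any non-diagonal orbit $\cO$: an $\iota$-fixed point $(j_1, j_2) = (j_2, j_1)$ forces $j_1 = j_2$, but all such diagonal pairs lie in $\cO_{1,1}$ since $(\Z/m\Z)^\times$ acts transitively on $\{(j,j) : j \in J_m\}$. Thus $\iota$ partitions $\cO$ into $\tilde\varphi(m)/2$ two-element sets $\{(j_1,j_2), (j_2,j_1)\}$; by symmetry of cosine, $c$ is constant on each such pair. I would then define
\[
\Psi_{\mm}^{\half}(x; \cO) := \prod_{\{(j_1,j_2), (j_2,j_1)\}} \bigl(x - c(j_1, j_2)\bigr),
\]
a monic polynomial of degree $\tilde\varphi(m)/2$, after which the identity $\Psi_{\mm}(x;\cO) = \Psi_{\mm}^{\half}(x;\cO)^2$ is immediate from the pairing.

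The main step to verify is that $\Psi_{\mm}^{\half}(x;\cO) \in \Z[x]$, and this is where I expect the only real work to lie. Since the Galois action commutes with $\iota$, it permutes the collection of $\iota$-pairs, so the multiset of values $\{c(j_1,j_2): \text{one representative per pair}\}$ is Galois-stable. The elementary symmetric functions of this multiset — the coefficients of $\Psi^{\half}$ — therefore lie in $\Q$; being sums of products of algebraic integers of the form $2\cos(2\pi j/m)$, they are integers. This mirrors the argument already used in \pref{proposition:injective}(1), so it should go through cleanly.
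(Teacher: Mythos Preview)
Your proof is correct and follows essentially the same route as the paper's. The paper's proof is extremely terse: for (i) it simply refers back to \pref{prop:linear}, and for (ii) it says the argument is the same as in \lref{lem:mmcase}(ii), i.e., that $\iota$ has no fixed points on a non-diagonal orbit. You have reconstructed exactly these ingredients and, in addition, spelled out the integrality of $\Psi_{\mm}^{\half}(x;\cO)$ via the commutation of $\iota$ with the Galois action --- a point the paper leaves implicit (it is the same mechanism as in \pref{proposition:injective}(1)).
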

\begin{proof}
(i) This case is considered in \pref{prop:linear}. \\
(ii) This can be shown in the same way as the proof of
 \lref{lem:mmcase} (ii). 
\end{proof}

We denote by $A$ the number of  $\iota$-invariant orbits.
As we have seen, $A=1$ if $\tilde\varphi(m)$ is odd. 
We have the following formula for $A=A(m)$ if
$\tilde\varphi(m)$ is even:
\begin{lem} 
Suppose $\tilde\varphi(m)$ is even. 
Put
\[
f_1=f_1(m) := \begin{cases}
0 & \mbox{if $4 \nmid m$}, \\
1 & \mbox{if $4\mid m$ and $8 \nmid m$}, \\
2 & \mbox{if $8\mid m$}.
\end{cases}
\]
Let $f_2$ be the number of odd prime factors of $m$. 
Let $f_3 = f_3(m) = 1$ if $4 \nmid m$, $f_2(m) \ge 1$, and $p\equiv 1 \mod 4$ for
 every odd prime factor $p$ of $m$; otherwise, we put $f_3(m)=0$.
Then we have 
\[
A=2^{f_1+f_2+f_3-1}.
\]
\end{lem}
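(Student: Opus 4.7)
The plan is as follows. First I would identify which orbits $\cO_{1,a}$ (for $a \in J_m$) are $\iota$-invariant. By \lref{lem:freeaction} with $d_1 = d_2 = m$, the quotient group $G/H = (\Z/m\Z)^{\times}/\{\pm 1\}$ acts freely and transitively on each orbit in $J_m \times J_m$. Hence $\cO_{1,a}$ is $\iota$-invariant iff there exists $g \in (\Z/m\Z)^{\times}$ such that $(g, ga) \equiv (a, 1) \pmod{\{\pm 1\}}$; running through the four sign choices shows that this holds iff $a^2 \equiv \pm 1 \pmod m$.

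Consequently $A = |T|/2$, where $T := \{a \in (\Z/m\Z)^{\times} : a^4 \equiv 1 \pmod m\}$ is the $4$-torsion subgroup; the factor of $2$ reflects the passage from $(\Z/m\Z)^{\times}$ to $J_m$ (for $m \ge 3$ the pairs $\{a,-a\}$ are genuinely of size $2$). I would then split $T = T^+ \sqcup T^-$ with $T^{\pm} := \{a : a^2 \equiv \pm 1 \pmod m\}$: here $T^+$ is the $2$-torsion subgroup, while $T^-$ is either empty or a full coset of $T^+$, according as $-1$ is a non-square or a square modulo $m$.

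Next I would apply the Chinese Remainder Theorem to the factorization $m = 2^{e_0}\prod_i p_i^{e_i}$. Each odd prime-power factor contributes $2$ to $|T^+|$ (since $(\Z/p_i^{e_i}\Z)^{\times}$ is cyclic of even order), while the $2$-part contributes $1,1,2,4$ for $e_0 = 0,1,2,\ge 3$ respectively; comparing with the definition of $f_1$ gives $|T^+| = 2^{f_1 + f_2}$ uniformly. For the square character of $-1$: locally $-1$ is a square modulo $p_i^{e_i}$ iff $p_i \equiv 1 \pmod 4$, and modulo $2^{e_0}$ iff $e_0 \le 1$; therefore $-1$ is a square mod $m$ precisely when $4 \nmid m$ and every odd prime factor of $m$ is $\equiv 1 \pmod 4$.

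Combining, $|T| = 2^{f_1 + f_2 + \epsilon}$ where $\epsilon \in \{0,1\}$ indicates whether $-1$ is a square mod $m$, and so $A = 2^{f_1 + f_2 + \epsilon - 1}$. It remains to verify $\epsilon = f_3$: when $f_2 \ge 1$ this is immediate from the definition of $f_3$, and the residual case $f_2 = 0$ forces $m = 2^{e_0}$ with $e_0 \ge 4$ (since $\tilde{\phi}(m)$ is even by hypothesis), in which case $\epsilon = 0 = f_3$ as well. The main obstacle is precisely this case analysis on $e_0$ combined with the subtle verification that the auxiliary clause $f_2 \ge 1$ in the definition of $f_3$ does not spoil the match with $\epsilon$ in the degenerate prime-power case; once this bookkeeping is done, the identity $A = 2^{f_1 + f_2 + f_3 - 1}$ follows.
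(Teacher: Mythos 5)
Your approach coincides with the paper's: identify the $\iota$-invariant orbits $\cO_{1,a}$ as those with $a^2 \equiv \pm 1 \pmod m$, pass to a count of units satisfying this congruence, factor by the Chinese Remainder Theorem into prime-power local contributions, separate the $x^2 \equiv 1$ solutions (giving $2^{f_1+f_2}$) from the question of whether $-1$ is a square (giving the factor $2^{\epsilon}$), and divide by $2$ to pass from $(\Z/m\Z)^\times$ to $J_m$. Two minor slips are worth flagging. First, the set you call $T$, ``the $4$-torsion subgroup'' $\{a : a^4 \equiv 1\}$, is \emph{not} the set you want, and the decomposition $T = T^+ \sqcup T^-$ is false in general: for $m=15$ every unit $a$ satisfies $a^4 \equiv 1 \pmod{15}$, so the $4$-torsion has $8$ elements, whereas $T^+ \sqcup T^- = \{a : a^2 \equiv \pm 1\} = \{1,4,11,14\}$ has only $4$. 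The correct set is $T^+ \sqcup T^-$ itself (the preimage of $\{\pm 1\}$ under squaring); it is closed under $a\mapsto -a$ and gives $A = |T^+ \sqcup T^-|/2$. Fortunately your subsequent computation works entirely with $T^+$ and $T^-$, never actually with the $4$-torsion, so the conclusion $A = 2^{f_1+f_2+\epsilon-1}$ and the verification $\epsilon = f_3$ both stand. Second, in the residual case $f_2 = 0$ the hypothesis that $\tilde\varphi(m)$ is even forces $m = 2^{e_0}$ with $e_0 \ge 3$, not $e_0 \ge 4$ (indeed $\tilde\varphi(8) = 2$); this does not affect the argument since either way $4 \mid m$ and hence $\epsilon = 0 = f_3$.
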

\begin{proof}
We consider the orbits that are invariant under the involution $\iota$.
We will solve the equation $x^2\equiv \pm1 \mod m$.
By the Chinese Remainder Theorem,
this equation can be reduced to the equation for each prime factor.
If $m=2^e$, then the number of solutions
to the equation $x^2\equiv 1 \mod m$ is given by $2^{f_1(m)}$.
If $m=p^2$, then we have two solutions to 
$x^2 \equiv 1 \mod m$.
This contributes to $f_2$.
We have a solution to the equation
$x^2 \equiv -1 \mod m$ only in the case $f_3(m)=1$. 
This solution makes double the number of 
solution for the equation $x^2 \equiv \pm 1 \mod m$ 
 to the number of solutions to the equation $x^2 \equiv 1
 \mod m$. 
Finally, we parameterize the orbits as $\cO_{1,x}$ with $x \in J_m$,
so that we will divide by 2. 
The formula for $A(m)$ given above follows. 
\end{proof}

We now proceed from orbits to polynomials.

\begin{obs}
(i) For an orbit $\cO \neq \iota(\cO)$, 
 $\Psi_{\mm}(x ;\cO)$  seem to be irreducible. 
In particular, $\Psi_{\mm}(x ;\cO)$ seem to be irreducible
if $\tilde{\phi}(m)$ is odd. \\
(ii) $\Psi_{\mm}^{\half}(x;\cO)$ seems to be irreducible
 for every $\iota$-invariant orbit $\cO\neq\cO_{1,m/2-1}$. 
This happens only if $\tilde{\phi}(m)$ is even. \\
(iii) If $\Psi_{\mm}(x ; \cO)=\Psi_{\mm}(x ; \cO')$,
then $\cO'=\cO$ or $\cO'=\iota(\cO)$. \\
\end{obs}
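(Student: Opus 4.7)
The plan is to reduce each part of the observation to a fiber-counting statement about the map
\[
c:J_m\times J_m\to\R,\qquad c(j_1,j_2)=2\cos\frac{2\pi j_1}{m}+2\cos\frac{2\pi j_2}{m}.
\]
By \pref{proposition:injective}(2), $\Psi_{\mm}(x;\cO)\in\Z[x]$ is irreducible if and only if $c|_{\cO}$ is injective, and by \lref{lem:fiber} all fibers of $c|_{\cO}$ share a common cardinality $m_{\cO}$, which governs $\Psiirred_{\mm}(x;\cO)$ and the coincidences among distinct orbits relevant to (iii). The only ``automatic'' symmetries of $c$ are $c(j_1,j_2)=c(j_2,j_1)$ (from $\iota$) and $c(j_1,j_2)=c(-j_1,-j_2)$ (already absorbed into the definition of $J_m$).

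The key analytic step is to decide when
\[
2\cos\frac{2\pi j_1}{m}+2\cos\frac{2\pi j_2}{m}=2\cos\frac{2\pi j'_1}{m}+2\cos\frac{2\pi j'_2}{m}.
\]
Setting $\zeta=e^{2\pi\sqrt{-1}/m}$, this is equivalent to a vanishing sum of at most eight $m$-th roots of unity, so I would invoke the Conway--Jones classification of minimal vanishing sums of roots of unity (or, equivalently via sum-to-product, Myerson-type results on rational products of cosines) to enumerate the possibilities. For part (i), with $\cO\ne\iota(\cO)$, the $\iota$-identification falls outside $\cO$ by hypothesis, and the Conway--Jones output should leave only $(j'_1,j'_2)=(j_1,j_2)$, giving injectivity of $c|_{\cO}$ and hence irreducibility. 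The corollary ``$\tilde\varphi(m)$ odd'' then follows because \lref{lem:mmcase}(ii) forces every non-diagonal orbit to satisfy $\cO\ne\iota(\cO)$, while the diagonal orbit $\cO_{1,1}$ satisfies $\Psi_{\mm}(x;\cO_{1,1})=2^{\tilde\varphi(m)}\Psi_m(x/2)$, which is irreducible by \lref{lem:Psid}.

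For part (ii), $\iota$-invariance of $\cO$ forces $m_{\cO}\ge 2$ automatically; the content is to show $m_{\cO}=2$. Excluding the sporadic orbit $\cO_{1,m/2-1}$ (where \pref{prop:linear} exhibits an additional collapse), the same Conway--Jones analysis should yield $m_{\cO}=2$, and transitivity of the Galois action $(\Z/N\Z)^\times$ on the set $c(\cO)\subset\R$ then gives irreducibility of $\Psi_{\mm}^{\half}(x;\cO):=\prod_{y\in c(\cO)}(x-y)$. For part (iii), an equality $\Psi_{\mm}(x;\cO)=\Psi_{\mm}(x;\cO')$ forces the multisets $c(\cO)$ and $c(\cO')$ to coincide, hence produces a collision $c(j_1,j_2)=c(j'_1,j'_2)$ with $(j_1,j_2)\in\cO$ and $(j'_1,j'_2)\in\cO'$; the Conway--Jones step leaves only the trivial possibilities $(j'_1,j'_2)\in\{(j_1,j_2),(j_2,j_1),(-j_1,-j_2),(-j_2,-j_1)\}$, all of which lie in $\cO\cup\iota(\cO)$, so $\cO'\in\{\cO,\iota(\cO)\}$.

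The hard part will be executing the Conway--Jones step cleanly: the classification of minimal vanishing sums of small length is explicit but produces a menagerie of sporadic identities supported at the primes $2,3,5$, and it is precisely these sporadic identities that power the exceptional rows of \pref{prop:linear}. Translating the classification into a complete list of cosine-sum coincidences $\cos\alpha+\cos\beta=\cos\gamma+\cos\delta$ with all four angles in $\tfrac{2\pi}{m}\Z$, and then verifying that no additional sporadic collision survives for arbitrary $m$ (inside a single orbit for (i) and (ii), or between two orbits not related by $\iota$ for (iii)), is where the real work lies, and is presumably why the authors have left these statements as observations rather than theorems.
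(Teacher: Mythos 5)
The statement you are asked to ``prove'' is explicitly an \emph{observation} in the paper: the authors say it is based on numerical computation, and they do not prove it (indeed, they state a page earlier that ``a complete characterization of the irreducibility of $\Psi_{\vec{d}}(x;\cO_i)$ remains open so far''). So there is no proof in the paper to compare against, and you were right to notice that the paper left these as observations precisely because the argument was not carried out.

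Your proposed reduction is sound as far as it goes: by \pref{proposition:injective}(2) irreducibility of $\Psi_{\mm}(x;\cO)$ is equivalent to injectivity of $c$ on $\cO$, and by \lref{lem:fiber} the fiber cardinality $m_{\cO}$ is constant on $\cO$, so (i), (ii), (iii) all come down to classifying collisions $\cos\alpha+\cos\beta=\cos\gamma+\cos\delta$ with angles in $\frac{2\pi}{m}\Z$. One small but worth-noting point: in (i) the diagonal orbit $\cO_{1,1}$ is $\iota$-invariant and so falls outside the hypothesis $\cO\ne\iota(\cO)$, yet it must be handled separately for the ``$\tilde\varphi(m)$ odd'' corollary; your identity $\Psi_{\mm}(x;\cO_{1,1})=2^{\tilde\varphi(m)}\Psi_m(x/2)$ together with \lref{lem:Psid} is exactly the right fix. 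The genuine gap is the one you yourself flagged: the Conway--Jones step is not executed. Until you actually extract from that classification a complete list of nontrivial four-term cosine identities $\cos\alpha+\cos\beta=\cos\gamma+\cos\delta$ over $\frac{2\pi}{m}\Z$, verify that inside a single non-$\iota$-fixed orbit the only collisions are the trivial ones $(j'_1,j'_2)\in\{(j_1,j_2),(j_2,j_1),(-j_1,-j_2),(-j_2,-j_1)\}$, and confirm that for $\iota$-fixed orbits (other than $\cO_{1,m/2-1}$) the fiber size is exactly $2$, you have a strategy, not a proof. In particular you should anticipate that the sporadic identities at the primes $2,3,5$ in the Conway--Jones list are exactly what produce the exceptional rows of \pref{prop:linear}, and you must rule out any further leakage; that bookkeeping is the whole content, and it is nontrivial. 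As written this is a plausible program for upgrading the observation to a theorem, not yet a proof of it.
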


\noindent
(III) For $q=2$, $\tilde{\phi}(d_2)=2$. 

In this case, 
$\tilde{\phi}(d_2) = 2$ implies $d_2=5, 8, 10$ or $12$, for which 
we set $a = 2, 3, 3$ and $5$, respectively. Then $J_{d_2} = \{1,a\}$. 

\begin{lem} Suppose $d_2 = 5, 8, 10$ or $12$. 
 If $5 | d_1$ for $d_2 =10$ or 
$d_2 | d_1$ for $d_2 \not=10$, then we have 
\[
 J_{d_1} \times J_{d_2} = \cO_{1,1} \sqcup \cO_{1,a}. 
\]
Otherwise, $J_{d_1} \times J_{d_2}$ forms a single orbit.
\end{lem}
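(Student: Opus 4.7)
The plan is to invoke \eref{ex:numberoforbits}(i), which states $\orb(d_1,d_2) = \tilde{\phi}(\gcd(d_1,d_2))$. Since $\tilde{\phi}(d_2)=2$ forces $d_2 \in \{5,8,10,12\}$, the set $J_{d_2}$ has exactly two elements, namely $\{1,a\}$ for the specified value of $a$.

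First I would show that $\cO_{1,1}$ and $\cO_{1,a}$ together exhaust $J_{d_1}\times J_{d_2}$. Given any $(j_1,j_2)$, the condition $j_1 \in J_{d_1}$ means $\gcd(j_1,d_1)=1$, so by the Chinese Remainder Theorem there exists a unit $g \in (\Z/N\Z)^{\times}$ with $g j_1 \equiv 1 \pmod{d_1}$ (and any prescribed value coprime to $d_2$ modulo $d_2$). Under the identification $j \sim -j$ on each factor, $g \cdot (j_1,j_2)=(1, g j_2)$ lies in $\{1\}\times J_{d_2}=\{(1,1),(1,a)\}$. Hence every orbit contains $(1,1)$ or $(1,a)$, so the number of orbits is either $1$ (when $\cO_{1,1}=\cO_{1,a}$) or $2$ (when they are distinct). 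In the latter case one immediately obtains $J_{d_1}\times J_{d_2}=\cO_{1,1}\sqcup\cO_{1,a}$.

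Next, I would determine which alternative holds by reading off the orbit count $\tilde{\phi}(\gcd(d_1,d_2))$, which equals $2$ precisely when $\gcd(d_1,d_2)\in\{5,8,10,12\}$. A quick case check: for $d_2=5$, the only such $\gcd$ is $5$, equivalent to $d_2 \mid d_1$; for $d_2=8$, the only candidate is $8$, equivalent to $d_2\mid d_1$; for $d_2=12$, the only candidate is $12$, equivalent to $d_2\mid d_1$; and for $d_2=10$, the valid values are $\gcd\in\{5,10\}$, both equivalent to $5\mid d_1$ but only the latter equivalent to $d_2\mid d_1$. These case analyses match the hypotheses in the statement exactly.

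There is no serious obstacle—the argument is a bookkeeping exercise once \eref{ex:numberoforbits}(i) is in hand. The only point requiring care is the exceptional behavior of $d_2=10$: since both $\tilde{\phi}(5)$ and $\tilde{\phi}(10)$ equal $2$, the criterion for two orbits loosens from $d_2\mid d_1$ to $5\mid d_1$, which is why $d_2=10$ is separated from the other three values of $d_2$ in the statement.
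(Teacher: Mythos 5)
Your proof is correct and follows essentially the same route as the paper's: both hinge on Example~\ref{ex:numberoforbits}(i), which gives $\orb(d_1,d_2)=\tilde{\phi}(\gcd(d_1,d_2))$, and then read off whether this equals $2$ or $1$ via a gcd case-check. Your writeup is somewhat more careful than the paper's one-line argument, because you also justify \emph{why} the two orbits (when there are two) must be $\cO_{1,1}$ and $\cO_{1,a}$ rather than merely that there are two of them: you show every orbit meets $\{1\}\times J_{d_2}$ by pushing the first coordinate to $1$. The one imprecision is the parenthetical ``(and any prescribed value coprime to $d_2$ modulo $d_2$)'': CRT does \emph{not} allow you to prescribe $g$ independently modulo $d_1$ and modulo $d_2$ when $\gcd(d_1,d_2)>1$. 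What you actually need (and use) is only the surjectivity of the reduction $(\Z/N\Z)^\times \to (\Z/d_1\Z)^\times$, which is true but is a different fact. Since the parenthetical claim is never invoked, the proof stands; just delete it or replace it by a correct citation of the surjectivity of that reduction map.
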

\begin{proof}
In these cases, we know that 
$\orb(d_1,d_2) = \tilde{\phi}(d_2) = 2$ from
 \eref{ex:numberoforbits}(i). 
Therefore, the orbit decomposition must be the one given
 above. 
\end{proof}

\begin{obs}
If $\tilde{\phi}(d_2) = 2$, 
then $\Psi_{d_1,d_2}(x; \cO)$ seems to be irreducible except
 in the cases considered in \pref{prop:linear}. 
\end{obs}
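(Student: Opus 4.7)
The plan is to reduce the statement to a question of injectivity via \pref{proposition:injective}(2): $\Psi_{d_1,d_2}(x;\cO)\in\Z[x]$ is irreducible if and only if the Galois-orbit map $c:\cO\to\R$, $(j_1,j_2)\mapsto 2\cos(2\pi j_1/d_1)+2\cos(2\pi j_2/d_2)$, is injective. Equivalently, by \lref{lem:fiber}, I need to show that $m_{\cO}=1$ for every orbit $\cO$ outside the sporadic list in \pref{prop:linear}.

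Suppose a nontrivial collision $c(j_1,j_2)=c(j_1',j_2')$ occurs with $(j_1,j_2)\ne(j_1',j_2')$ in the same orbit. Rearranging gives
\[
2\cos\tfrac{2\pi j_1}{d_1}-2\cos\tfrac{2\pi j_1'}{d_1}
=2\cos\tfrac{2\pi j_2'}{d_2}-2\cos\tfrac{2\pi j_2}{d_2},
\]
whose right-hand side lies in the real cyclotomic field $\Q(\zeta_{d_2})^{+}$. Because $\tilde{\phi}(d_2)=2$, this is a fixed real quadratic field: $\Q(\sqrt{5})$ for $d_2\in\{5,10\}$, $\Q(\sqrt{2})$ for $d_2=8$, and $\Q(\sqrt{3})$ for $d_2=12$. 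If $j_2=j_2'$ the right side vanishes, and since $J_{d_1}$ was chosen modulo $\pm 1$ this forces $j_1=j_1'$, contradicting distinctness. Hence $\{j_2,j_2'\}=\{1,a\}$ with $a$ as in the excerpt, and the right-hand side is a specific nonzero element of $\Q(\sqrt{D})\setminus\Q$.

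The problem therefore reduces to classifying pairs $j_1\ne j_1'\in J_{d_1}$ with $2\cos(2\pi j_1/d_1)-2\cos(2\pi j_1'/d_1)$ equal to a prescribed quadratic irrational. I would first apply the averaging trick of \cref{lem:inj2irr}, summing Galois conjugates over $(\Z/d_1\Z)^{\times}$ and using that the sum of primitive roots of unity is a value of the M\"obius function, to extract congruence conditions relating $d_1$ to $d_2$; this already confines $d_1$ severely. Next, I would invoke the Conway--Jones classification of vanishing sums of roots of unity to identify precisely when a difference of two cosines of rational angles can live in a fixed quadratic subfield, which should bound the conductor of $d_1$ in terms of $d_2$. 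Finally, one verifies that the remaining configurations match those already collected in \pref{prop:linear}.

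The hard part is the last step: producing an effective bound on $d_1$ whose exceptional list is \emph{exactly} the one in \pref{prop:linear}. A uniform argument seems elusive because the constraint ``$2\cos(2\pi j/d_1)-2\cos(2\pi j'/d_1)\in\Q(\sqrt{D})$'' weakens as $d_1$ acquires prime factors in common with $d_2$, so sporadic coincidences must be eliminated by a finite but nontrivial case check for each $d_2\in\{5,8,10,12\}$. This Diophantine core of the argument is, I suspect, the reason the author records the statement only as an observation, supported by numerical evidence, rather than as a theorem.
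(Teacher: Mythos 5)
This statement is recorded in the paper as an \emph{Observation}, phrased with ``seems'' and explicitly attributed to ``the aid of numerical computations''; the paper offers no proof, and you correctly surmise this at the end. Your preliminary reductions are sound and match the paper's toolkit: by \pref{proposition:injective}(2) the question is injectivity of $c$ on $\cO$; a collision forces $j_2\neq j_2'$, hence $\{j_2,j_2'\}=\{1,a\}$, and the right-hand side becomes a fixed nonzero element of $\Q(\sqrt{D})$ (namely $\pm\sqrt 5$ for $d_2\in\{5,10\}$, $\pm 2\sqrt 2$ for $d_2=8$, $\pm 2\sqrt 3$ for $d_2=12$). One consequence worth spelling out: since the left-hand side lies in $\Q(\zeta_{d_1})$, a collision forces $\sqrt{D}\in\Q(\zeta_{d_1})\cap\Q(\zeta_{d_2})=\Q(\zeta_{\gcd(d_1,d_2)})$, so $5\mid d_1$, $8\mid d_1$, or $12\mid d_1$ respectively; outside those divisibility classes $J_{d_1}\times J_{d_2}$ is a single orbit (by the unlabeled lemma just before the Observation) and irreducibility follows at once.

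The gap is exactly the one you flag yourself. The remaining Diophantine question --- for which $d_1$ in the above divisibility classes, and which $j_1\neq j_1'\in J_{d_1}$, does $2\cos(2\pi j_1/d_1)-2\cos(2\pi j_1'/d_1)$ equal the prescribed quadratic irrational --- is not resolved, only delegated to Conway--Jones and an unspecified finite case check. A further caution: the averaging used in \cref{lem:inj2irr} and in the proof of \pref{prop:linear} is applied when $c(\cO)$ is a \emph{singleton}, so summing over the full Galois orbit yields a rational (M\"obius) value; a single collision does not make $c(\cO)$ a singleton, so to average here you must restrict to the subgroup fixing $\Q(\sqrt{D})$ or argue differently. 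Until the case analysis is completed and confirmed to reproduce exactly the list in \pref{prop:linear}, the proposal is a promising program rather than a proof --- which, as you suspect, is most likely why the authors stated this only as an observation.
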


\vspace{1cm}
\noindent
{\bf Acknowledgment}\\
\indent
This work was partially supported by JST CREST Mathematics 15656429.
H.O. is supported in part by JSPS Grants-in-Aid 15H03613 and 17K18726, 
and T.S. is supported in part by JSPS Grants-in-Aid 17K18740 and 18H01124.


\vskip 1cm

\end{document}